\def\x{{x}}
\def\y{{y}}
\def\Y_#1{y_{\!#1}}
\def\R{\mathbb{R}}
\def\Z{\mathbb{Z}}
\def\TV{\mathrm{TV}}
\newcommand{\lb}[1]{{\lfloor #1 \rfloor}}
\newcommand{\ub}[1]{{\lceil #1 \rceil}}
\newtheorem{theorem}{Theorem}[section]
\newtheorem{corollary}[theorem]{Corollary}
\newtheorem{lemma}[theorem]{Lemma}
\newtheorem{remark}[theorem]{Remark}
\newtheorem{assumption}{Assumption}[section]
\theoremstyle{definition}
\newtheorem{example}{Example}[section]
\theoremstyle{remark}
\definecolor{darkred}{rgb}{0.9,0.1,0.1}
\newcommand{\rn}[1]{\Romanbar{#1}}
\newcommand{\n}{\mathbb{N}}
\renewcommand{\tilde}{\widetilde} 
\newcommand{\norm}[1]{\left\lvert #1 \right\rvert} 
\begin{document} 

\title{Coupling and Convergence for\\  
Hamiltonian Monte Carlo\thanksref{a2}}
\runtitle{Coupling \& Convergence for HMC}

\thankstext{a2}{N.B-R. was supported in part by the Provost's Fund for Research at Rutgers
University Camden under Project No.~205536, and also in part by the NSF Research Network in Mathematical Sciences: 
``Kinetic description of emerging challenges in multiscale problems of natural sciences'', (PI: Eitan Tadmor; NSF Grant \#: 11-07444). A.E.\ and R.Z.\ have been supported by the Hausdorff Center for Mathematics. All authors thank the referees for valuable comments.}

 \begin{aug}
 \author{\fnms{Nawaf} \snm{Bou-Rabee}\thanksref{m1}\ead[label=e1]{nawaf.bourabee@rutgers.edu}}
 \and
 \author{\fnms{Andreas}
 \snm{Eberle}\thanksref{m2}\ead[label=e2]{eberle@uni-bonn.de}}
\\
\and
\author{\fnms{Raphael}
\snm{Zimmer}\thanksref{m2}\ead[label=e3]{raphael.zimmer@uni-bonn.de}}

 \address{\thanksmark{m1}Department of Mathematical Sciences \\ Rutgers University Camden \\ 311 N 5th Street \\ Camden, NJ 08102, USA \\ \printead{e1}}
 \address{\thanksmark{m2}Institut f\"{u}r Angewandte Matehmatik \\  Universit\"{a}t Bonn \\ Endenicher Allee 60 \\  53115 Bonn, Germany \\ \printead{e2}\\  \printead{e3}}
 
 
 \runauthor{N.~Bou-Rabee, A.~Eberle, R.~Zimmer}
 \end{aug}

 \begin{abstract}
Based on a new coupling approach, we prove that the transition step of the Hamiltonian Monte Carlo algorithm is contractive w.r.t.\ a carefully designed Kantorovich ($L^1$ Wasserstein) distance. The lower bound for the contraction rate is explicit. Global convexity of the potential is not required, and thus multimodal target distributions are included. Explicit quantitative bounds for the number of steps required to approximate the stationary distribution up to a given error $\epsilon$ are a direct consequence of contractivity. These bounds show that HMC can overcome diffusive behaviour if the duration of the Hamiltonian dynamics is adjusted appropriately.
 \end{abstract}

\begin{keyword}[class=MSC]
\kwd[Primary ]{60J05}
\kwd[; Secondary ]{65P10, 65C05}
\end{keyword}


\begin{keyword}
\kwd{coupling}
\kwd{convergence to equilibrium}
\kwd{Markov Chain Monte Carlo}
\kwd{Hamiltonian Monte Carlo}
\kwd{Hybrid Monte Carlo}
\kwd{geometric integration}
\kwd{Metropolis-Hastings}
\end{keyword}

\maketitle


\section{Introduction}

Markov Chain Monte Carlo (MCMC) is a family of methods to approximately sample from an arbitrary probability distribution.  In conjunction with Bayesian methods, MCMC has revolutionized statistics and enabled applications of statistical inference to machine learning, pattern recognition, and artificial intelligence \cite{andrieu2003introduction,ghahramani2015probabilistic,webb2003statistical,bishop2006pattern,Di2009}.    Much of the classical research activity related to MCMC considered techniques based on random walks.  Regrettably the meandering behavior of these random walks leads to sampling methods that are slow \cite{Gu1998,DiHoNe2000}.  Therefore, a more recent focus of MCMC research activity is to develop faster methods by overcoming this random walk or diffusive behavior. Hamiltonian Monte Carlo (HMC), in principle, provides one way to do this \cite{DuKePeRo1987,Li2008,Ne2011,Sa2014,BoSaActaN2018}.  The basic idea is to ``give the walker momentum''.  With this momentum, the walker intermixes periods of fast running with slower running to efficiently explore features of a probability distribution.  However, beyond this intuition, the mathematical properties of HMC are not well understood.

In this paper, we use a coupling technique \cite[Ch.~14]{LPW} to analyse the HMC algorithm.  
Let $\mu$ denote a probability measure on $\mathbb{R}^d$ with non-normalized density $e^{-U(x)}$. In HMC, the function $U(x)$ is viewed as a potential energy. In its simplest form, 
the algorithm simulates a Markov chain with state space $\mathbb R^d\times \mathbb R^d$ and invariant probability measure  $\hat\mu$ with non-normalized density $e^{-H(x,v)}$ where $H(x,v) = \frac{1}{2} |v|^2 + U(x) $ is viewed as a Hamiltonian. Below we only consider the first component which is a Markov chain on $\mathbb R^d$ with invariant probability measure $\mu$.
A transition step of HMC inputs an initial position $x \in \mathbb{R}^d$ and a duration parameter $T>0$, and outputs a final position by taking 
the following steps:
\begin{description}
\item[Step 1.] Draw an initial velocity $\xi\sim \mathcal N(0,I_d)$.
\item[Step 2.] Run the Hamiltonian dynamics associated to the Hamiltonian function $H $ for a duration $T$ with initial position $x$ and initial velocity $\xi$.
\item[Step 3.] Output the final position of this Hamiltonian dynamics.
\end{description}   
We call the algorithm with this transition step \emph{exact HMC}.
 In practice, the Hamiltonian dynamics in Step 2 is approximated by a numerical integrator. Furthermore, an accept/reject step can be added to remove the bias due to time discretization error. In order to distinguish these from exact HMC, we call the resulting algorithms \emph{unadjusted and adjusted} numerical HMC, respectively.

Below, we introduce a new coupling between the transition steps of two copies of exact HMC, or numerical HMC, respectively. 
The approach we use is based on the framework introduced in \cite{Eb2016A}, and the specific coupling of the velocities is
strongly inspired by a recently developed coupling for second-order Langevin dynamics \cite{EbGuZi2016}.
The underlying idea is to couple two copies of HMC at different positions $x$ and $y$ by coupling their velocities $\xi$ and $\eta$ such that for a positive constant $\gamma\le T^{-1}$, the event $ \xi - \eta =-\gamma (x-y)$ happens with maximal probability, and otherwise, to apply a reflection coupling to the velocities.  In particular, the coupling is designed such that in the free case when $U= 0$, the positions approach each other during the transition step with maximal probability. We use this contractive property of the coupling to obtain an explicit contraction rate for HMC in a specially designed Kantorovich ($L^1$ Wasserstein) metric.

To be more specific, we state a simpified version of one of our main results, which will later be reformulated rigorously as Corollary~\ref{cor:QBHMC} --- a corollary of the fundamental contraction result in Theorem~\ref{thm:contrmainexact}.  Let $\pi(x,dy)$ denote the one-step transition kernel of exact HMC and let $\mathcal W^1$ denote the standard $L^1$-Wasserstein distance.  Assuming sufficient regularity on the potential energy function $U$ (see Assumption~\ref{A123}) including that $\nabla U$ is globally Lipschitz with Lipschitz constant $L$, and $U$ is strongly convex outside a Euclidean ball of diameter $\mathcal R$ with strong convexity constant $K$, we prove that if \[
LT^2\ \le\ \min \left( \frac{K}{L},\frac{1}{4},\frac{1}{256\,L\mathcal R^2}\right)
\]  then for all initial distributions $\nu$ and $\eta$, and for all $n\ge 0$, \[
\mathcal W^1 (\nu \pi^n ,\eta \pi^n )\ \le\ Me^{- cn} \mathcal W^1 (\nu ,\eta) ,\qquad\quad\text{where}
\] 
 \[
c\ =\ \frac{1}{10}\min\left( 1,\frac 12 KT^2 ( 1+\frac{\mathcal R}{T}) e^{-\mathcal R/(2T)}\right)\, e^{-2\mathcal R/T}, \quad\text{and}\quad M=e^{\frac 52(1+\mathcal R/T)}.
\]
More precisely, we prove in Theorem~\ref{thm:contrmainexact} that the transition kernel $\pi$ is even contractive with contraction rate $c$ w.r.t.\ an $L^1$ Wasserstein distance $\mathcal W_\rho$ based on an explicit metric $\rho$ that is equivalent to the Euclidean distance. This statement can be used to quantify the speed of convergence of HMC to equilibrium, and it also directly implies completely explicit bias and variance bounds, as well as concentration inequalities for ergodic averages, see e.g.\  \cite{JoulinOllivier}.

A remarkable feature of the contraction rate $c$ is that 
under our hypothesis on $LT^2$, it only depends on $K$ and  $\mathcal{R} / T$.  
In particular, it does not depend explicitly on the dimension, although frequently there will be a hidden
dimension dependence through the values of the constants $K$ and $\mathcal R$.
If we choose $T$ proportional to $\mathcal{R}$, and assume that $K$ and $L \mathcal{R}^2$ are fixed (which excludes the possibility of high energy barriers), then the rate does not 
deteriorate as $\mathcal{R}$ increases. 
Noting that the Hamiltonian dynamics is run for time $T$ during each transition step,
we can conclude that a given approximation accuracy can be obtained after running the dynamics for a total time of kinetic order $O(\mathcal{R})$, 
where $\mathcal{R}$ basically is the diameter of a ball where the distribution $\mu$ concentrates in.
On the other hand, a Random Walk based method would require a time of diffusive order $O(\mathcal R^2)$. 
Hence if $T$ is chosen adequately then HMC can indeed overcome diffusive behaviour.

In Theorems \ref{thm:maincontraction} and \ref{thm:QBNHMC}, we extend our results to numerical HMC with a velocity Verlet integrator. The corresponding results are more involved than for exact HMC, but the bound $c$ for the contraction rate is the same provided the time discretization step size $h$ is chosen sufficiently small depending on the other parameters, including the dimension. We consider both  adjusted and unadjusted numerical HMC. In the unadjusted case, the contraction bounds are easier to derive and take a nicer form that
is close to the corresponding results for exact HMC, but the price to pay is an additional bias
due to the fact that the invariant measure for unadjusted HMC does not coincide with $\mu$.
The resulting error has to be controlled by other techniques that are out of the scope of this 
work, see \cite{mangoubi2017rapid,durmuseberle2019}. In Theorems \ref{thm:contrlyapexact}
and \ref{thm:lyapcontraction},
we also state versions of our main
results where the asymptotic strong convexity assumption is replaced by a Foster-Lyapunov drift condition, which permits $U$ that are only asymptotically convex.  
\smallskip

Several recent works have studied ergodic properties of HMC methods.
In \cite{BoSa2016}, geometric ergodicity has been proven for a variant of exact HMC (called randomized HMC) where the lengths of the durations of the Hamiltonian dynamics at the different transitions of the Markov chain are 
i.i.d.\ exponential random variables with mean $T$.  The proof relies on Harris' theorem, which requires a (local) version of Doeblin's condition: a minorization condition for the transition probabilities at a finite time and in a compact set. Unfortunately,
given the complicated form of these transition probabilities, the minorization condition involves non-explicit constants, and in particular, the dependence of the convergence rate on parameters in HMC is unclear. 
We remark that randomized HMC is related to Anderson's dynamics, which describes a molecular system interacting with a heat bath \cite{An1980, Li2007,  ELi2008}.    Convergence of Anderson's dynamics on an $n$-torus was proven in Ref.~\cite{ELi2008} by showing that Doeblin's condition holds.  Recently, geometric ergodicity for HMC, but without explicit rates, has been shown 
by Durmus, Moulines and Saksman \cite{DurmusMoulinesSaksman}, cf.\ also \cite{Livingstone} for a related work. 

Closely related to our results is 
\cite{mangoubi2017rapid}, which significantly extends ideas from \cite{Seiler}. In \cite{mangoubi2017rapid}, Mangoubi and Smith apply
coupling techniques in order to analyse the properties of 
HMC in high dimension under the assumption of strong convexity of $U$; see also \cite{cheng2017underdamped} for related work on second-order Langevin dynamics. 
They obtain quite sharp results on the dimension dependence 
caused by different numerical integrators of the Hamiltonian dynamics, at the price of
imposing very restrictive assumptions on $U$. Complementary to this, the approach presented here is more broadly applicable and offers significant flexibility for further extensions.
It provides a suitable basic framework for studying dimension dependence caused by intrinsic 
properties of the underlying model, but in contrast to \cite{mangoubi2017rapid}, we
do not provide sharp bounds for the dimension dependence caused by time
discretization. A major difference of our approach to \cite{mangoubi2017rapid,cheng2017underdamped} is that these works rely on
synchronous couplings of the initial velocities in HMC, i.e., they set $\eta = \xi$.  
This simplifies the analysis considerably, but as a
consequence, the couplings are contractive only if the stationary distribution is strongly log-concave.  Another difference is that in \cite{mangoubi2017rapid} and \cite{cheng2017underdamped}, the coupling is applied to the exact dynamics, 
whereas the numerical discretization is controlled by a perturbative approach. In
contrast, the coupling introduced below is contractive both for exact and numerical HMC. 
Its superiority to synchronous couplings is supported both by theoretical results and by numerical simulations.
In connection with \cite{heng2017unbiased}, the coupling may also be useful to parallelize HMC.\smallskip  

Let us finally remark that the Hamiltonian flow is what, in principle, enables HMC to make large moves in state space that reduce correlations in the resulting Markov chain.
One might hope that,  by increasing the duration $T$ further, the final position moves even further away from the initial position, thus
reducing  correlation.  However, simple examples show that this outcome is far from assured.  For example, for a standard normal distribution,
the corresponding Hamiltonian flow is a planar rotation with period $2\pi$. It is easy to see  that, if the initial position is taken from the target distribution, as $T$ increases from $0$ to $\pi/2$, the correlation between the initial and final positions decreases and for $T = \pi/2$, the initial and final positions are independent. However increasing $T$ beyond $\pi/2$ will
 cause an increase in the correlation and for $T = \pi$, the chain is not even ergodic.  For general distributions, it is likely that a small $T$ will lead to a highly correlated chain, while choosing $T$ too large  may cause the Hamiltonian trajectory to make a U-turn and fold back on itself, thus increasing correlation \cite{HoGe2014}. Generally speaking the performance of HMC may be very sensitive to changes in $T$ as first noted by Mackenzie in \cite{Ma1989}.  This sensitivity is reflected in our conditions on the duration parameter $T$.

\section{Main results} \label{sec:main_results}

\subsection{Hamiltonian Monte Carlo} \label{sec:main_results:HMC}
Fix a function
$U\in C^4(\mathbb R^d)$ such that $\int\exp(-U(x))\, dx <\infty$, and denote by
\begin{equation}\label{eq:H}
H(x,v)\ =\ U(x)\,+\, \frac{1}{2}\norm{v}^2,\qquad x,v\in\mathbb R^d,
\end{equation}
the corresponding Hamiltonian. 
Hamiltonian Monte Carlo (HMC) is an MCMC method for approximate sampling
from probability measures of the form
\begin{equation}\label{eq:mu}
\mu (dx)= \mathcal Z^{-1}\,\exp (-U(x))\, dx,\quad
\hat\mu (dx\, dv) = \hat{\mathcal Z}^{-1}\,\exp (-H(x,v))\, dx\, dv,
\end{equation}
on $\mathbb R^d$, $\mathbb R^d\times \mathbb R^d$, respectively, where
$\mathcal Z=
\int\exp (-U(x))\,dx$ and $\hat {\mathcal Z}=(2\pi )^{d/2}\mathcal Z$.\smallskip

We consider HMC as a Markov chain on $\mathbb R^d$ (not on the 
phase space $\mathbb R^d\times\mathbb R^d$). The transition step from $x$
is given by $x\mapsto X'(x)$ with
 \begin{eqnarray}
 	\label{*} X'(x) &=& q_T(x,\xi) \,I_{A(x)} \ + \ x \,I_{A(x)^C}.
 	\end{eqnarray}
Here the duration
$T:\Omega\rightarrow \R_+$ is in general a random variable on the underlying probability space $(\Omega ,\mathcal A,P)$ with a given distribution
$\nu$ (e.g.\ $\nu=\delta_s$ or $\nu=\text{Exp}(\lambda^{-1})$), $\xi\sim
N(0,I_d)$ and $\mathcal U\sim \text{Unif}(0,1)$ are independent random variables, and the acceptance event for a proposed transition is either
 \begin{equation} 	
 	A(x) = \{\, \mathcal U \, \leq \, \exp\left(H(x,\xi)-H(q_T(x,\xi),p_T(x,\xi))\
 	\right) \,\} \quad \text{or}\quad A(x) = \Omega ,
 	\label{*A} 
 \end{equation}  
corresponding to adjusted and unadjusted HMC, respectively.
Below we only consider the case where $T\in (0,\infty)$ is a given deterministic constant. Moreover,
\begin{eqnarray*}
	\phi_t(x,v) &=& \left(q_t(x,v), p_t(x,v)\right)\qquad ( t\in [0,\infty ),\ x,v\in\mathbb R^d)
\end{eqnarray*}
is the exact Hamiltonian flow, or a numerical approximation of the Hamiltonian flow.
The exact Hamiltonian flow is the solution of the ODE
\begin{eqnarray}\label{eq:Hamilton}
	\frac{d}{dt} q_t \ = \ p_t,\quad \frac{d}{dt} p_t \ = \ -\nabla U(q_t),\quad
	\left(q_0(x,v),p_0(x,v)\right) \ = \ (x,v).
\end{eqnarray}
The corresponding Markov chain with transition step determined by \eqref{*}, \eqref{*A} and \eqref{eq:Hamilton} is called \emph{exact HMC}. Notice that  for exact HMC,
$H(q_T(x,\xi),p_T(x,\xi))=H(x,\xi)$. Hence all proposed transitions are accepted without adjustment, and the transition step is simply given by 
\begin{equation}
\label{eq:10}X'(x)\ =\ q_T(x,\xi).
\end{equation}
In practice, the Hamiltonian flow has to be approximated by a numerical integrator. Here, we focus on the \emph{velocity Verlet} integrator with discretization step size $h>0$. In this case, $\phi_t=(q_t,p_t)$ is the solution
of the equation 
\begin{equation}
\label{velVerlet}
	\frac{d}{dt} q_t   \, = \,   p_{\lb{t}_h}-\frac{h}{2} \ \nabla
	U(q_{\lb{t}_h}),\ \frac{d}{dt} p_t \, = \, -\frac{1}{2}\left(\nabla
	U(q_{\lb{t}_h})\ +\ \nabla U(q_{\ub{t}_h})\right)
\end{equation}
with initial condition $\left(q_0(x,v),p_0(x,v)\right) = (x,v)$, where
\begin{equation}
\label{eq:round}
\lb{t}_h= \max\{ s\in h\Z \ :\ s\leq t \} \mbox{\quad and\quad} \ub{t}_h= \min\{ s\in
h\Z \ :\ s\geq t \} .
\end{equation}
The corresponding Markov chain with transition step determined by \eqref{*}, \eqref{*A} and \eqref{velVerlet} is called \emph{adjusted resp.\ unadjusted} numerical HMC. 
For brevity, whenever $h>0$ is fixed, we write $\lb{t}$ and $\ub{t}$
instead of $\lb{t}_h$ and $\ub{t}_h$, respectively. 
Since the velocity Verlet integrator does not preserve the Hamiltonian exactly, 
the rejection event $A(x)^C$ is not empty in general for adjusted numerical HMC. However, for fixed $x$, the rejection probability goes to $0$ as $h\downarrow 0$.
\medskip

The HMC algorithm induces a time-homogeneous Markov chain on $\mathbb R^d$ with transition kernel
\begin{eqnarray}
\label{eq:transitionkernel}	\pi (x,B) &=& P[X'(x)\in B] \\
\nonumber 	&=& P[\{q_T(x,\xi)\in B\}\cap A(x)] \ + \ (1-P[A(x)]) \ \delta_x(B).
\end{eqnarray}
Here $1-P[A(x)]$ is the rejection
probability for a proposed transition from $x$. For exact and adjusted numerical HMC, the probability measure $\mu$ defined by
\eqref{eq:mu} is invariant for $\pi $ under the assumptions made below, cf.\ e.g.\ \cite{BoSaActaN2018,Ne2011}. For unadjusted numerical HMC, the invariant probability measure for $\pi$ in general does not agree with $\mu$, but when it exists, typically approaches $\mu$ as $h\downarrow 0$.

\subsection{Assumptions} \label{sec:main_results:setting} 
For our first main result we impose the following regularity condition on $U$:
 \begin{assumption}\label{A123}
 $U$ is a function in $C^4(\mathbb R^d)$ satisfying the following conditions:
 	\begin{itemize}
 	\item[(A1)] $U$ has a local minimum at $0$, and $U(0)=0$.
 	\item[(A2)] $U$ has bounded second, third and fourth derivatives. We set
 \begin{equation}
 \label{eq:LMN}L=\sup\|\nabla^2U\| ,\quad M=\sup\|\nabla^3U\| ,\quad N=\sup\|\nabla^4U\|  .
 \end{equation}
 \item[(A3)] $U$ is strongly convex outside a Euclidean ball, i.e., there exist constants $\mathcal R\in [0,\infty)$ and $K\in(0,\infty)$
	s.t.\ for all $x,y\in\R^d$ with $\norm{x-y}\geq \mathcal R$,
	\begin{eqnarray}\label{15} 
		(x-y)\cdot(\nabla U(x)-\nabla U(y))  & \geq &   K \norm{x-y}^2.
	\end{eqnarray}
 	\end{itemize}
 	 \end{assumption}

Notice that (A3) implies that $U$ has a local minimum. Hence if (A3) holds then (A1) can always be satisfied by centering the coordinate system appropriately and subtracting a constant from $U$.
	Conditions (A1) and (A2) imply  
	\begin{eqnarray} \label{A1.5}
		\norm{\nabla U(x)} &=& \norm{\nabla U(x) - \nabla U(0)} \ \leq \ L\norm{x}
		\quad \mbox{for any } x\in\R^d.
	\end{eqnarray} 
Alternatively, it is possible to replace (A3) by a Lyapunov type drift condition. 
\begin{assumption}\label{A1245}
$U$ is a function in $C^4(\mathbb R^d)$ satisfying (A1) and (A2). Moreover, there exists a
 measurable function $\Psi :\mathbb R^d\to [0,\infty)$ and constants $\lambda ,\alpha \in (0,\infty )$ and $R_2\in (0,\infty ]$ such that 
 	\begin{itemize}
 	 \item[(A4)] \ 
 	The level set $\{ x\in \mathbb R^d: \Psi (x)\le 4\alpha /\lambda\}$
 is compact.
 \item[(A5)] \ For all $x\in\mathbb R^d$ with $|x|< R_2$, $$(\pi\Psi )(x)\ \le\ (1- \lambda )\Psi (x) \, +\,\alpha .$$
 	\end{itemize}
 	 \end{assumption}
 	 If Assumption \ref{A1245} is satisfied then we set
 	 \begin{equation}\label{DefcalR}
 	 \mathcal R\ :=\ \sup\,\left\{  |x|: x\in\mathbb R^d\text{ such that } \Psi (x)\le 4\alpha /\lambda\right\} .
 	 \end{equation}
 In the results below, this constant will play a similar r\^ole as the constant $\mathcal R$ in (A3).  
 For exact and unadjusted HMC, we usually choose $R_2=\infty$ in Assumption \ref{A1245}. For adjusted numerical HMC, Lyapunov functions satisfying (A5) globally may fail to exist since on larger balls smaller step sizes may be required to ensure stability. 
In this case, we can still prove a contraction result on a large ball of radius $R_2<\infty $ if a corresponding Lyapunov condition is satisfied.

Although the drift condition (A5) is not stated as 
explicitly as condition (A3), it can be verified in an explicit way for several important classes of examples.

\begin{example}[Quadratic Lyapunov functions]\label{example:LYAP1}
Suppose that Assumptions (A1) and (A2) are satisfied, and there exist constants $\kappa ,C\in (0,\infty )$ such that 
\begin{equation}
\label{DC}
x\cdot \nabla U(x)\ \ge \ \kappa |x|^2-C\qquad\text{for all $x\in\mathbb R^d$}.
\end{equation}
Then there exists $n_0\in\mathbb N$ such that for exact HMC and for unadjusted numerical HMC with
step size $h=T/n$, the Lyapunov condition
\begin{equation}
\label{LyapPsi} (\pi\Psi )(x)\ \le\ (1- \kappa T^2/8 )\Psi (x) \, +\, (C+2d) T^2 \qquad\text{with }\,\Psi (x):=|x|^2
\end{equation}
is satisfied for all $x\in \mathbb R^d$ provided  $n\ge n_0$ and $L(T^2+hT)\le \kappa /(10\, L)$. For adjusted numerical HMC, the same assertion holds for $|x|\le R_2$ where $R_2$ is an
arbitrary finite constant, provided $h^2R_2^3$ is sufficiently small. 
\end{example}
\begin{example}[Exponential Lyapunov functions]\label{example:LYAP2}
Suppose that Assumptions (A1) and (A2) are satisfied and there exist constants $\kappa ,C, Q\in (0,\infty )$ such that
\begin{equation}
\label{DC1}
x\cdot \nabla U(x)\ \ge \ \kappa |x|-C\quad\text{and}\quad |\nabla U(x)|\ \le\ Q\qquad\text{for all $x\in\mathbb R^d$.}
\end{equation}
Then there exists $\delta >0 $ and a smooth function $\Psi:\mathbb R^d\to (0,\infty )$ with
$\Psi (x)=\exp (\delta |x|)$ for $|x|\ge 1/\delta$ such that for exact or unadjusted HMC 
\begin{equation}
\label{LyapPsiexp} (\pi\Psi )(x)\ \le\ \delta \kappa T^2 \, (5- \Psi (x)/7)
\end{equation}
is satisfied for all $x\in \mathbb R^d$ provided $L(T^2+hT)\le 1$. Explicitly, one can choose
$\delta =\kappa /(4C+8d+Q^2T^2)$. Again, a corresponding assertion holds
for adjusted HMC provided $|x|\le R_2$ and $h$ is chosen sufficiently small  depending on $R_2$.
\end{example}
A sketch of the proofs of \eqref{LyapPsi} and \eqref{LyapPsiexp} is included in Appendix~\ref{app:explicit_lyap}.

\subsection{Coupling}\label{sec:main_results:couplings}
We now introduce a coupling for the transition steps of two copies of the HMC chain starting at different initial conditions $x$ and $y$. The coupling is defined in a different way depending on whether $x$ and $y$ are far apart or sufficiently close. 

\subsubsection{Synchronous coupling for $|x-y|\ge 2\mathcal R$}\label{sec:main_results:couplings:synch_coupling} 
The easiest way to couple the transition probabilities $\pi (x,\cdot)$
and $\pi (y,\cdot)$ for two states $x,y\in\mathbb R^d$ is to use the same random
variables $\xi$ and $\mathcal U$ in both cases for the momentum refreshment and
to decide whether a proposed move is accepted. The corresponding coupling
transition is given by $(x,y)\mapsto (X'(x,y),Y'(x,y))$ where
\begin{equation}\label{**S}
\begin{aligned}
	X'(x,y) \ &= \ q_T(x,\xi) \, I_{A(x)} \ + \ x  \, I_{A(x)^C}, \\ 
	Y'(x,y) \ &= \ q_T(y,\xi) \,  I_{A(y)} \ + \ y \, I_{A(y)^C},
	\end{aligned}
\end{equation} 
with $A(x)$, $A(y)$ defined as in \eqref{*A} above with the same $\xi$ and $\mathcal U$ in both cases. We will apply synchronous
coupling for $\norm{x-y}\geq 2\mathcal R$. Here we can exploit the strong convexity condition (A3) to ensure contractivity for the coupling transition.

\subsubsection{A contractive coupling for $|x-y|< 2\mathcal R$}
For $\norm{x-y}< 2\mathcal R$ we use a different coupling that enables us to
derive a weak form of contractivity even in the absence of convexity.
Let $\gamma>0$ be a positive constant. The precise value of the parameter 
$\gamma$ will be chosen in an appropriate way below.
The coupling transition step is now given by 
\begin{equation}\label{**}
\begin{aligned}
		X'(x,y) \ &= \ q_T(x,\xi) \, I_{A(x)} \ + \ x \, I_{A(x)^C}, \\
		Y'(x,y) \ &= \ q_T(y,\eta) \,  I_{\hat{A}(y)} \ + \ y \,  I_{\hat{A}(y)^C},
\end{aligned}
\end{equation}
with the event $A(x)$ defined as in \eqref{*A} above, and
\begin{equation}	\label{hatA}	
		\hat{A}(y) \,=\, \{\, \mathcal U \ \leq \ \exp(H(y,\eta)-H(q_T(y,\eta),p_T(y,\eta)) 
		\, \}\quad\text{or}\quad \hat{A}(y) \,=\, \Omega 
\end{equation}
in the adjusted and unadjusted case, respectively.
Here the \emph{same} random variable $\mathcal U$ as in \eqref{*A} is used to decide
whether the proposed move to $q_T(y,\eta)$ is accepted. Moreover, we set
\begin{eqnarray}
	\label{eta}
	\eta &:=& \begin{cases}
		\xi  \ + \ \gamma  z  & \text{if } \ \tilde{\mathcal U} \ \leq \
		\dfrac{\varphi_{0,1}(e\cdot \xi + \gamma \norm{z})}{\varphi_{0,1}(e\cdot \xi)},
		\\
		\xi \ -\  2 (e\cdot \xi) e  & \text{otherwise},
		\end{cases}
\end{eqnarray}
where $z=x-y$, $e=z/\norm{z}$, $\varphi_{0,1}$ denotes the density of
the standard normal distribution, and $\tilde{\mathcal U}\sim\text{Unif}(0,1)$ is
independent of $T$, $\xi$ and $\mathcal U$.\medskip

This coupling is partially motivated by a coupling for second order Langevin diffusions introduced 
in \cite{EbGuZi2016}. It is defined
in such a way that $\xi-\eta=-\gamma z$ holds with
the maximal possible probability, and a reflection coupling is applied
otherwise. As illustrated in Figure~\ref{fig:coupling}, the reason for this choice is that the difference process
$q_t(x,\xi)-q_t(y,\eta)$ is contracting in a time interval $[0,t_0]$
if the difference $\xi-\eta$ of the initial velocities is negatively
proportional to the difference of the initial positions.

In order to verify that $(X'(x,y),Y'(x,y))$ is indeed a coupling of the transition probabilities $\pi (x,\cdot )$ and $\pi (y,\cdot ) $, we remark that the distribution of $\eta$ is 
$N(0,I_d)$ since, by definition of $\eta$ in \eqref{eta} and a change of variables, \begin{eqnarray*}
P[ \eta \in B ]
&  =& E \left[ I_B( \xi+\gamma z)\,  \frac{\varphi_{0,1}(e\cdot \xi + \gamma \norm{z})}{\varphi_{0,1}(e\cdot \xi)} \wedge 1 \right] \\
&&\quad + E \left[ I_B( \xi - 2 (e \cdot \xi) e ) \left( 1 - \frac{\varphi_{0,1}(e\cdot \xi + \gamma \norm{z})}{\varphi_{0,1}(e\cdot \xi)}  \right)^+ \right] \\
&=& \int I_B( x+\gamma z)\, {\varphi_{0,I_d}( x + \gamma {z})}\wedge {\varphi_{0,I_d}( x)} \,dx\\
&&\quad +\int I_B( x - 2 (e \cdot x) e ) \left( \varphi_{0,I_d}( x)- {\varphi_{0,I_d}( x + \gamma {z})}{}  \right)^+\, dx\\
&=&\int I_B(x)\,\varphi_{0,I_d}( x)\, dx\   =\ P[\xi \in B] 
\end{eqnarray*}
for any measurable set $B$. Here $a \wedge b$ denotes the minimum of real numbers $a$ and $b$, and we have used that $ \varphi_{0,I_d}( y- 2 (e \cdot y) e)= \varphi_{0,I_d}( y)= \varphi_{0,I_d}( -y)$.   As a byproduct of this calculation, note also that \[
P[ \eta \ne \xi + \gamma z ] = \int \left( \varphi_{0,I_d}( x)- {\varphi_{0,I_d}( x + \gamma {z})}{}  \right)^+ dx = d_{\TV}(\mathcal{N}(0, I_d) , \mathcal{N}(\gamma z, I_d) )
\] where $d_{\TV}$ is the total variation distance.  Hence, by the coupling characterization of the total variation distance, $\xi-\eta=-\gamma z$ does indeed hold with maximal possible probability.

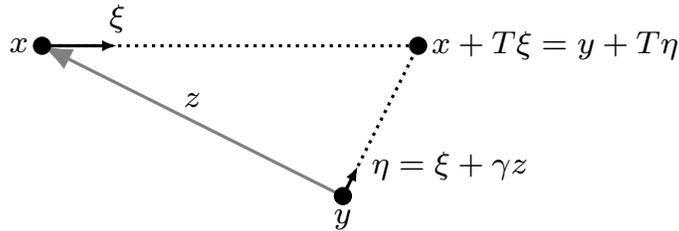
\begin{figure}[ht]
\centering
\begin{tikzpicture}[scale=1.0]
\begin{scope}[very thick] 
       \tikzmath{\gamm=0.2; \lamb=1/\gamm; 
       		     \x1 = 1; \x2 =2; \y1=5; \y2=0.0;  \u1=\x1+1; \u2=\x2+0;
		     \z1=\x1-\y1; \z2=\x2-\y2;
                      \v1=\y1+(\u1-\x1)+\gamm*(\x1-\y1); 
                      \v2=\y2+(\u2-\x2)+\gamm*(\x2-\y2); 
                      \px1=\x1+\lamb*(\u1-\x1);   \px2=\x2+\lamb*(\u2-\x2); 
                      \py1=\y1+\lamb*(\v1-\y1); \py2=\y2+\lamb*(\v2-\y2); 
                      } 
                     
                      \draw[->,gray,-{Latex[length=4mm]}](\y1,\y2)--(\x1,\x2) node [midway, above, black, scale=1.5]  {$z$};
                      
\filldraw[color=black,fill=black] (\x1,\x2) circle (0.1) node [left,black, scale=1.5]  {$x$};
\filldraw[color=black,fill=black] (\y1,\y2) circle (0.1) node [below,black, scale=1.5]  {$y$};
\draw[->,-{Latex[length=2mm]}](\x1,\x2)--(\u1,\u2) node [above,black, scale=1.5] {$\xi$};
\draw[->,-{Latex[length=2mm]}](\y1,\y2)--(\v1,\v2) node [right,black, scale=1.5]  {$\eta = \xi+\gamma z$};
\draw[-,dotted](\x1,\x2)--(\px1,\px2) node [right,black, scale=1.5]  {$x + T \xi = y + T \eta$};
\draw[-,dotted](\y1,\y2)--(\py1,\py2) ;
\filldraw[color=black,fill=black] (\py1,\py2) circle (0.1);

 \end{scope}
\end{tikzpicture}
\caption{\small A diagram showing the basic idea behind the coupling in the case $\gamma=T^{-1}$. 
The dotted lines connect the initial positions $x$ and $y$ with the final position
 $q_T(x,\xi) = q_T(y,  \eta)$ for $U = 0$.   When $U \neq 0$, $q_t(x,\xi) - q_t(y,  \eta)$ is still contracting for small $t$.}
  \label{fig:coupling}
\end{figure}

\subsection{Numerical illustration of couplings}

Before stating our theoretical results, we test the coupling defined by \eqref{**} numerically on the following distributions:
\begin{itemize}
\item A normal mixture distribution where the mixture components are twenty two-dimensional Gaussian distributions with covariance matrix given by the $2 \times 2$ identity matrix and with mean vectors given by $20$ independent samples from the uniform distribution over the rectangle $[0, 10] \times [0,10]$.  The energy barriers are not large.  This example is adapted from \cite{LiWo2001,KoZhWo2006}. 
\item A Laplace mixture distribution where the mixture components are twenty two-dimensional (regularized) Laplace distributions using the same covariance matrix and
mean vectors as in the preceding example.  However, unlike the preceding example, in this example the underlying potential is not strongly convex outside a ball, but instead satisfies Assumption~\ref{A1245} 
with respect to an exponential Foster-Lyapunov function as in Example~\ref{example:LYAP2}.
\item A banana-shaped distribution whose associated potential energy $U: \mathbb{R}^2 \to \mathbb{R}$ is given by the Rosenbrock function 
$U(x,y) = (1-x)^2 + 10 (y-x^2)^2$. This function is highly non-convex and unimodal with a global minimum at the point $(1,1)$ where $U(1,1)=0$.  This minimum lies in a long, narrow, banana shaped valley.  
\end{itemize} 
For simplicity, we apply the coupling globally and choose the step size $h$ to integrate
the Hamiltonian dynamics small enough to ensure that essentially all proposed moves are accepted.  
Realizations of the coupling process with $T=1$ and $\gamma=1$ are shown in Figure~\ref{fig:coupling_sample_paths}.   
We chose these parameters only for visualization purposes.  
The different components of the coupling are shown as different color dots.   
The insets of the figures show the distance between the components of the coupling as a function of the number of steps.  

Figure~\ref{fig:mean_coupling_times} shows the average time after which 
 the distance between the components of the coupling is for the first time within $10^{-9}$.  To produce this figure, we generated $10^5$ samples of the coupled process
for one hundred different values of the duration parameter $T$.  We chose the coupling parameter $\gamma$ equal to either $T^{-1}$,
or equal to zero which corresponds to a synchronous coupling.  The former choice is motivated by Figure~\ref{fig:coupling}.

\begin{figure}[t]
\begin{center}
\includegraphics[width=0.3\textwidth]{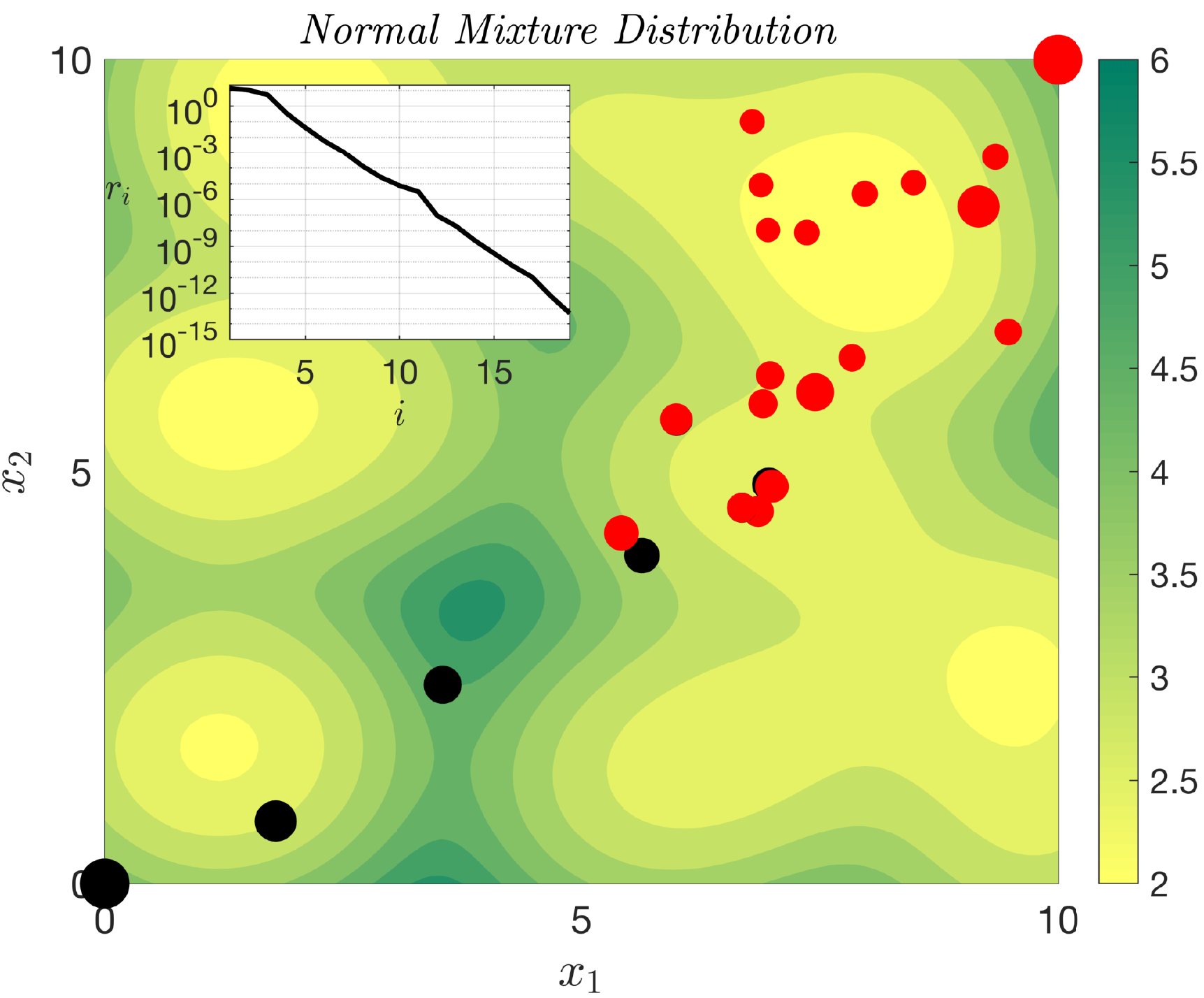} 
\hspace{0.125in}
\includegraphics[width=0.3\textwidth]{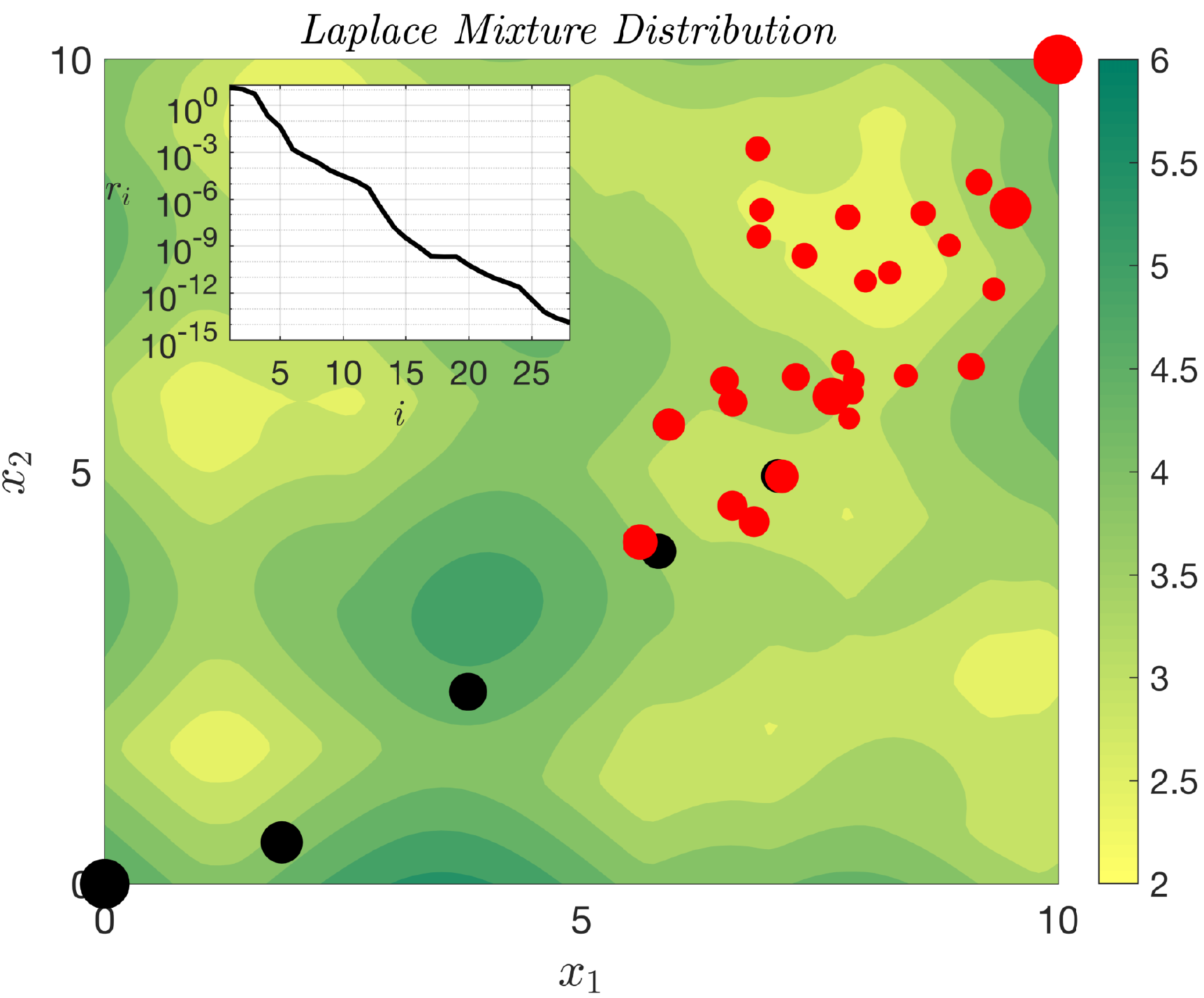} 
\hspace{0.125in}
\includegraphics[width=0.3\textwidth]{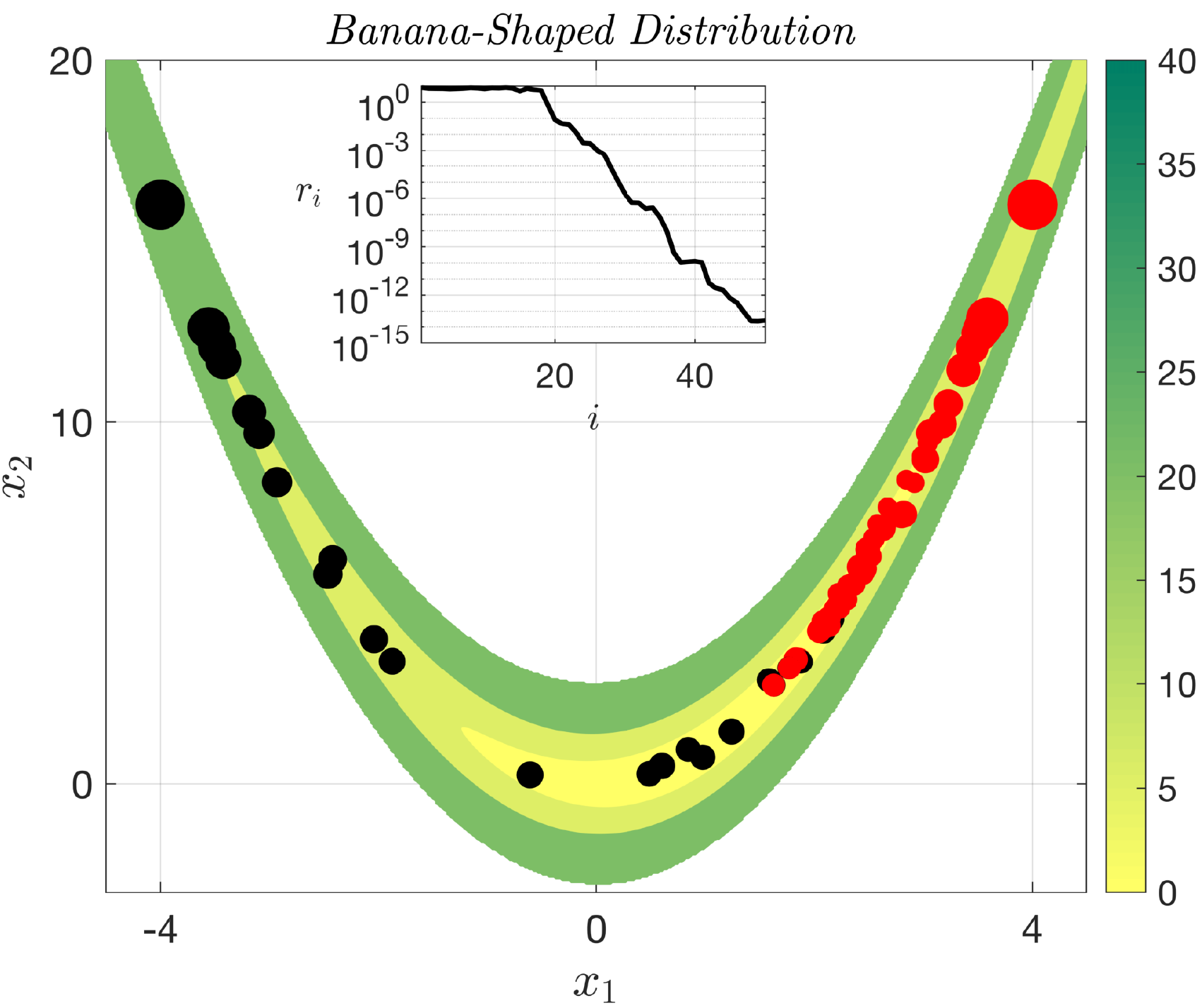}
\hbox{
            (a) 
        \hspace{1.425in} (b) 
        \hspace{1.425in} (c) 
        } 
\end{center}
\caption{ \small
This figure illustrates realizations of the coupling with $T=1$ and $\gamma=1$.   The different components of the coupling are shown as different color dots.  
The size of the dots is related to the number of steps: points along the trajectory corresponding to a larger number of steps have
smaller markers.  A contour plot of the underlying potential energy function is shown in the background.
The inset plots the distance $r_i$ between the components of the coupling as a function of the step index $i$. 
The simulation is terminated when this distance first reaches $10^{-14}$.  
 In (a), (b), and (c), this occurs in 20, 29, and 51 steps, respectively.
 }
 \label{fig:coupling_sample_paths}
\end{figure}

\begin{figure}[t]
\begin{center}
\includegraphics[width=0.3\textwidth]{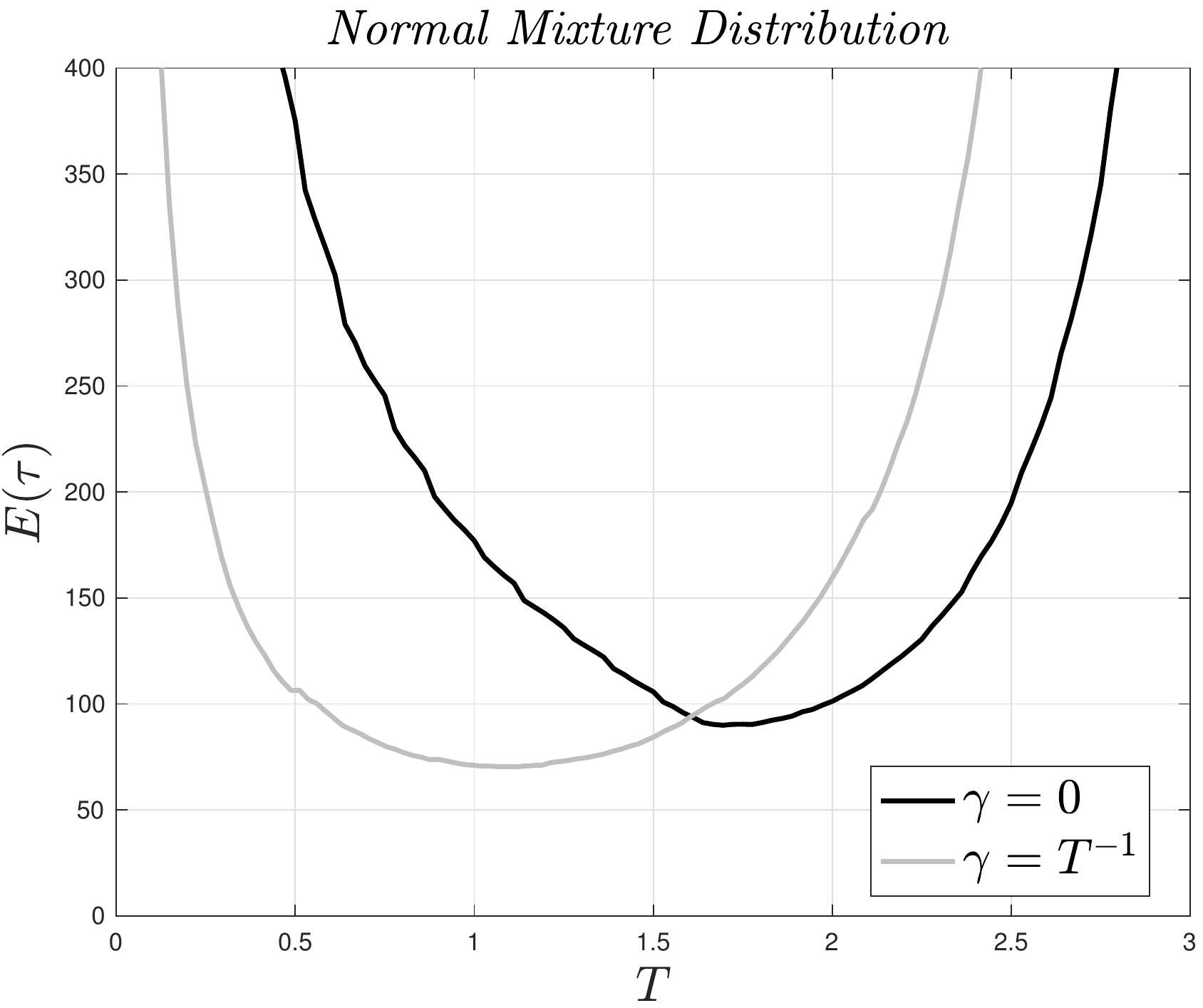} 
\hspace{0.125in}
\includegraphics[width=0.3\textwidth]{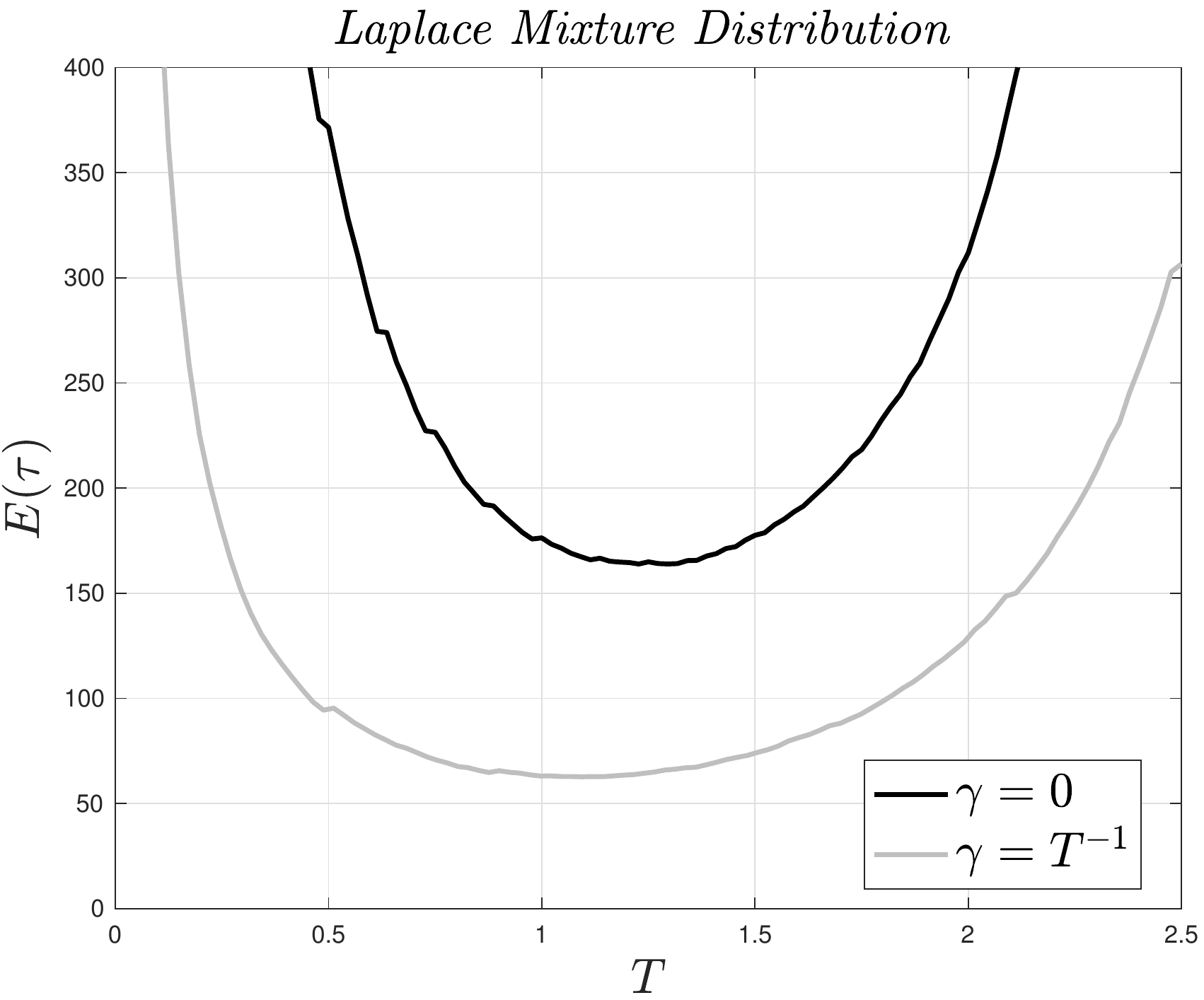} 
\hspace{0.125in}
\includegraphics[width=0.3\textwidth]{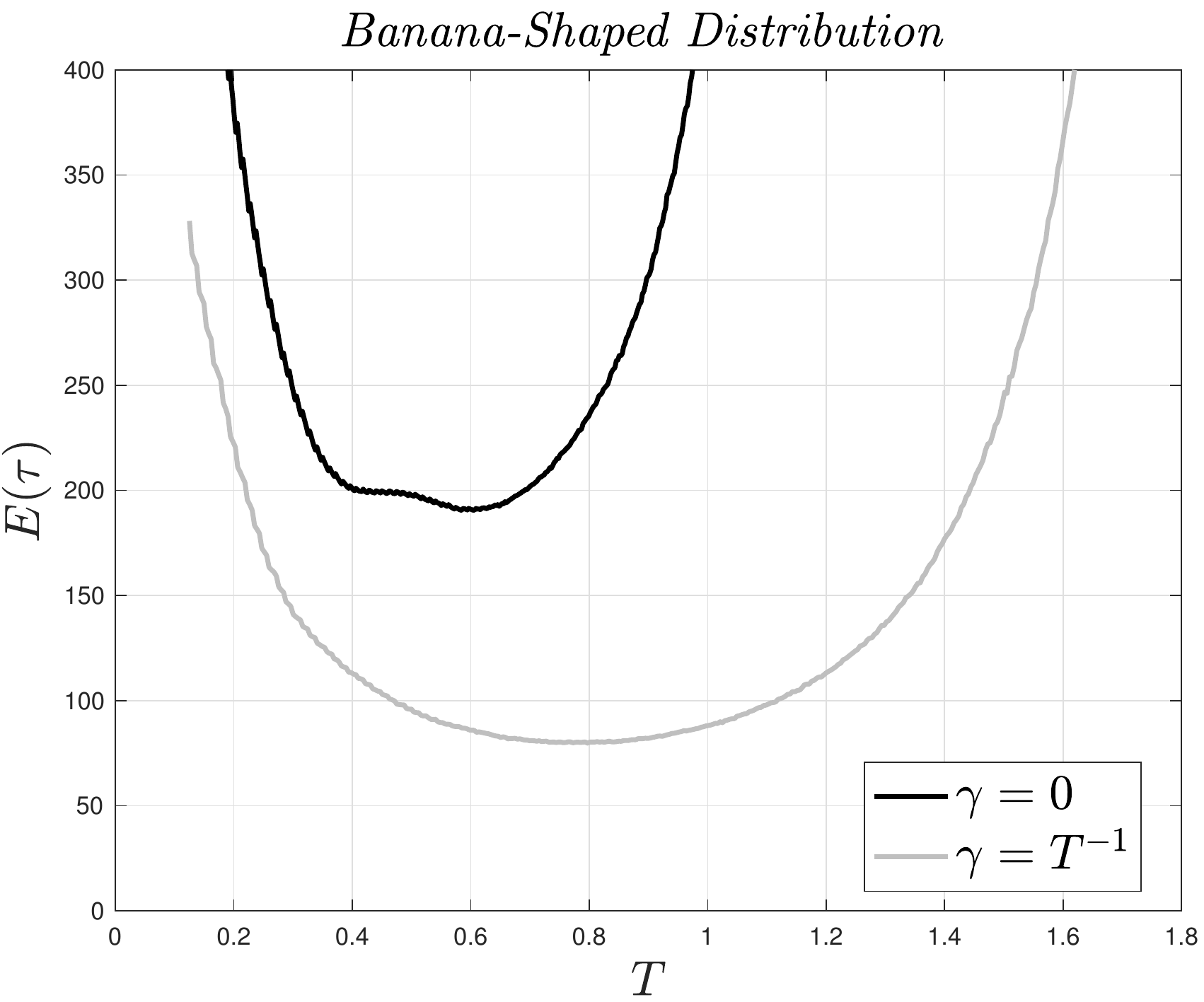}
\hbox{
            (a) 
        \hspace{1.425in} (b) 
        \hspace{1.425in} (c) 
        } 
\end{center}
\caption{ \small This figure illustrates the average of the random time $\tau$ after which 
 the distance between the components of the coupling is for the first time within $10^{-9}$. The estimated average is plotted
as a function of the duration $T$ of the Hamiltonian dynamics
for $\gamma=0$ (black) and $\gamma=T^{-1}$ (gray).  The latter choice is motivated by Figure~\ref{fig:coupling}.   From (a), note that the minimum of the function
is smaller and occurs at a smaller value of $T$ when $\gamma=T^{-1}$.  This difference is more pronounced in (b) and (c), because the underlying potential is
not strongly convex outside a ball in (b) and is highly nonconvex in (c).
The kinks in the graphs are due to an artificial periodicity phenomena with respect to the time step size, 
and can be reduced by reducing the time step size.}
 \label{fig:mean_coupling_times}
\end{figure}

\subsection{Contractivity}

We now state our main contraction bounds for the coupling introduced above. For given $x,y\in\mathbb R^d$ let
$$r(x,y)\ =\ |x-y|,\qquad R'(x,y)\ =\ |X'(x,y)-Y'(x,y)|,$$
denote the coupling distance before and after the transition step.
For exact HMC we set $h:=0$, whereas for numerical HMC, $h>0$ is the discretization step size. 

\subsubsection{Contractivitiy by strong convexity} 

The assumed strong convexity of $U$ outside of a euclidean ball directly implies contractivity of a transition step for exact HMC for initial values $x$ and $y$ that are sufficiently far apart. 

\begin{theorem}[Contractivity for exact HMC, strongly convex case]\label{SCA}
Suppose that Assumption \ref{A123} is satisfied, and let $h=0$.
Then for any $x,y\in\R^d$ and $T\in\R_+$ such that $\norm{x-y}\geq 2 \mathcal R$
and $LT^2\leq K/L$, 
\begin{eqnarray}\label{SC1}
	R'(x,y) &\leq& \left( 1- \frac{1}{2} K T^2\right) r(x,y).
\end{eqnarray}
\end{theorem}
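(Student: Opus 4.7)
Since $h = 0$ ensures all proposals are accepted (see \eqref{eq:10}), under the synchronous coupling we have $R'(x,y) = |z_T|$ almost surely, where $z_t := q_t(x,\xi) - q_t(y,\xi)$. Setting also $w_t := p_t(x,\xi) - p_t(y,\xi)$, Hamilton's equations \eqref{eq:Hamilton} give the variational system
\begin{equation*}
\dot z_t \,=\, w_t, \qquad \dot w_t \,=\, -G_t\, z_t, \qquad z_0 \,=\, x - y, \quad w_0 \,=\, 0,
\end{equation*}
where $G_t := \int_0^1 \nabla^2 U(q_t(y,\xi) + \theta z_t)\, d\theta$ is symmetric with $0 \le G_t \le L\, I_d$ in the L\"owner order by (A2). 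The guiding picture is that, because $w_0 = 0$, the trajectory $z_t$ behaves like a quasi-harmonic oscillator $\cos(G_0^{1/2}t)\, z_0$, which contracts at leading rate $\approx \tfrac 12 KT^2$ in the regime $LT^2 \ll 1$.

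The first step is an a priori bound keeping $z_t$ in the strongly convex region. Integrating the variational equation twice and using $w_0 = 0$ yields $z_t = z_0 - \int_0^t (t-s) G_s z_s\, ds$, whence $|z_t - z_0| \le L \int_0^t (t-s) |z_s|\, ds$. A bootstrap/Gronwall argument combined with the hypothesis $LT^2 \le K/L$, which forces $LT^2 \le 1$ since (A2)--(A3) imply $K \le L$, shows $|z_t|$ remains comparable to $|z_0|$ on $[0,T]$; in particular $|z_t| \ge \mathcal R$ throughout, because $|z_0| = r \ge 2\mathcal R$. The integral mean-value representation of $G_s$ combined with (A3) then gives $z_s \cdot G_s z_s \ge K |z_s|^2$ for all $s \in [0,T]$.

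Contractivity follows from a second-order Taylor expansion of $|z_t|^2$. Since $\tfrac{d}{dt}|z_t|^2 = 2 z_t \cdot w_t$ vanishes at $t = 0$ (as $w_0 = 0$) and $\tfrac{d^2}{dt^2}|z_t|^2 = 2|w_t|^2 - 2\, z_t \cdot G_t z_t$, Taylor's formula with integral remainder yields
\begin{equation*}
|z_T|^2 \,=\, |z_0|^2 + 2\int_0^T (T-s) \bigl(|w_s|^2 - z_s \cdot G_s z_s\bigr)\, ds
\,\le\, |z_0|^2 - 2K \int_0^T (T-s) |z_s|^2\, ds + 2 \int_0^T (T-s) |w_s|^2\, ds.
\end{equation*}
From $w_s = -\int_0^s G_r z_r\, dr$ one gets $|w_s|^2 \le L^2 s^2 \sup_{r \le s} |z_r|^2$, which, together with the a priori bound and the hypothesis $L^2 T^2 \le K$, makes the kinetic contribution strictly subdominant relative to the potential one. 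The resulting estimate has the form $|z_T|^2 \le |z_0|^2 \bigl(1 - KT^2 + \text{higher order in } LT^2\bigr)$, and the elementary inequality $\sqrt{1 - x} \le 1 - x/2$ then delivers \eqref{SC1}.

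The main obstacle is securing the sharp leading constant $\tfrac 12$: one must verify that both the kinetic contribution $\int_0^T (T-s)|w_s|^2\, ds$ and the replacement error $\int_0^T (T-s)(|z_s|^2 - |z_0|^2)\, ds$ in the dominant integral are strictly of higher order in $LT^2$ compared to the principal term $KT^2 |z_0|^2$. The precise hypothesis $LT^2 \le K/L$ is exactly what places the dynamics in the underdamped regime where the cosine approximation to $\cos(G_0^{1/2} T)z_0$ governs the leading contractive behaviour.
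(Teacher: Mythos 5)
Your skeleton is the same as the paper's: synchronous coupling so that $R'=|z_T|$ with $z_t=q_t(x,\xi)-q_t(y,\xi)$, a second-order analysis of $|z_t|^2$ using $z_s\cdot\bigl(\nabla U(q_s(x,\xi))-\nabla U(q_s(y,\xi))\bigr)\ge K|z_s|^2$ while $|z_s|\ge\mathcal R$, plus a confinement argument; this is Lemma \ref{C} specialized to $h=0$. The gap is quantitative and sits exactly where you wave your hands. The claim that the kinetic term and the replacement error are ``strictly of higher order in $LT^2$'' is false in the admissible regime: the hypothesis is $LT^2\le K/L$, i.e.\ $L^2T^4\le KT^2$, so at (or near) the boundary the kinetic contribution $2\int_0^T(T-s)|w_s|^2\,ds\le\frac16L^2T^4\sup_{s\le T}|z_s|^2$ is of the \emph{same} order as the principal term $KT^2|z_0|^2$ (constant about $\frac16$), and the replacement error $2K\int_0^T(T-s)(|z_0|^2-|z_s|^2)\,ds$ is of order $KLT^4\sup_{s\le T}|z_s|^2$, again comparable to $KT^2|z_0|^2$ since $K\le L$ only forces $LT^2\le1$. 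Hence the expansion $|z_T|^2\le|z_0|^2(1-KT^2+\text{higher order})$ is not available, and with it goes the final step ``$\sqrt{1-x}\le1-x/2$ gives \eqref{SC1}'': honest bookkeeping along your route yields at best $|z_T|^2\le(1-\kappa KT^2)|z_0|^2$ for some $\kappa<1$, i.e.\ $R'\le(1-\frac{\kappa}{2}KT^2)r$ with $\frac{\kappa}{2}<\frac12$. This is precisely why the paper does not Taylor-expand but solves the comparison system exactly (variation of constants, \eqref{c5}), so the leading term is $\cos(\sqrt{2K}T)|z_0|^2\le(1-KT^2+\frac16K^2T^4)|z_0|^2$ with $K^2T^4\le KT^2$, the perturbation entering only through the sine kernel together with the bootstrap $\sup_{s\le T}|z_s|\le|z_0|$; even then Lemma \ref{C} only gives the squared bound $R'^2\le(1-\frac12KT^2)r^2$. (In fact the paper's two-line deduction of \eqref{SC1} from Lemma \ref{C} silently drops the square root; what the argument actually delivers is $R'\le(1-\frac14KT^2)r$, the constant that reappears in Theorem \ref{SCB} and in the later rate computations, so the constant you are trying to reach is not obtainable by either route as stated.)

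The confinement step has the same problem. The crude Gronwall bound $|z_t-z_0|\le\frac{Lt^2}{2}\sup_{s\le t}|z_s|$ gives $|z_t|\ge|z_0|\,\frac{1-Lt^2}{1-Lt^2/2}$, which guarantees $|z_t|\ge\frac12|z_0|\ge\mathcal R$ only when $LT^2\le\frac23$, whereas the hypothesis allows $LT^2$ up to $K/L\le1$. Moreover confinement and contraction cannot be run as two consecutive independent steps: the paper closes this with a continuation/contradiction argument in which the contraction inequality \eqref{c13} (valid as long as $|z_s|\ge\mathcal R$) supplies $|z_s|\le|z_0|$, which in turn gives $|z_t-z_0|\le\frac12Lt^2|z_0|\le\frac12|z_0|$ and so keeps $|z_t|\ge\mathcal R$. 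You need this intertwined bootstrap (or a strictly stronger restriction on $LT^2$) for your ``first step'' to be legitimate; as written, both the a priori step and the sharp-constant step would fail in part of the parameter range covered by the theorem.
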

The proof is a direct consequence of Lemma \ref{C} below, see Section
\ref{sec:proofsmain}. A similar result is proven in \cite{mangoubi2017rapid}. \medskip

Notice that contractivity is only guaranteed for $LT^2$ smaller than the 
conditioning number $K/L$. Sometimes, contraction bounds for longer durations can be obtained.
However, as discussed in the introduction, due to possible periodicity of the Hamiltonian flow, in general these 
do hot hold for arbitrary $T$.

\begin{example}[Bivariate Normal Target]\label{example:bivariatenormal}
Consider $U(x) = \frac{1}{2} x^T \Sigma^{-1} x$, where $\Sigma = \begin{bmatrix} \sigma_{\max}^2 & 0 \\ 0 & \sigma_{\min}^2 \end{bmatrix}$ with $\sigma_{\max} \ge \sigma_{\min}>0$.
In this case, Assumption \ref{A123} is satisfied with $\mathcal R=0$,
$L = \sigma_{\min}^{-2} $ and $K= \sigma_{\max}^{-2}$.  
Theorem~\ref{SCA} gives a global contraction for synchronous coupling with rate $K T^2 / 2$ provided that $ T^2 \le \sigma_{\min}^4/  \sigma_{\max}^2 $.  In particular, a necessary condition is that $T$ is no greater than $\sigma_{\min}$, which avoids periodicities in the Hamiltonian dynamics  \cite{Ma1989}.
\end{example}

Next we consider both adjusted and unadjusted numerical HMC. We fix an upper bound $h_1>0$ for the discretization step size $h$. We assume that
\begin{equation}\label{LTT}
LT(T+h_1)\ \le\ K/L.
\end{equation}
Under similar conditions as in Theorem \ref{SCA} we obtain contractivity on average for coupled HMC transition steps:

\begin{theorem}[Contractivity for numerical HMC, strongly convex case] \label{SCB}
Suppose that Assumption \ref{A123} is satisfied, and fix
$T,R_2,h_1\in(0,\infty)$ such that \eqref{LTT} holds. Then there exists $h_0>0$ depending only on $K$, $L$, $M$, $N$, $T$, $R_2$ and $d$ such that for any $h\in(0,\min (h_0,h_1)]$ with
$T/h\in\Z$ and for any $x,y\in\R^d$ with $\norm{x-y}\geq 2\mathcal R$ and
$\max(\norm{x},\norm{y})\leq R_2$,
\begin{eqnarray}\label{SC2}
	E[R'(x,y)] &\leq&\left( 1 -\frac{1}{4} KT^2\right) r(x,y).
\end{eqnarray}
Moreover, for fixed $K$, $L$, $M$ and $N$, $h_0$ can be chosen such that
$h_0^{-1}$ is of order $O(R_2)$ for unadjusted numerical HMC and of order $O((1+T^{-1/2})(R_2^2+d))$ for adjusted numerical HMC.
\end{theorem}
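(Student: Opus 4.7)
Since $\norm{x-y}\ge 2\mathcal R$, the coupling prescribed in Section~\ref{sec:main_results:couplings:synch_coupling} is synchronous, so a single $\xi\sim N(0,I_d)$ and (in the adjusted case) a single $\mathcal U\sim\mathrm{Unif}(0,1)$ drive both chains. The idea is to treat numerical HMC as a perturbation of exact HMC: Theorem~\ref{SCA} already provides the deterministic contraction $\norm{q_T^{\mathrm{ex}}(x,\xi)-q_T^{\mathrm{ex}}(y,\xi)}\le (1-\tfrac12 KT^2)\norm{x-y}$ under the hypothesis $LT(T+h_1)\le K/L$, so the proof reduces to absorbing two discretization errors into the remaining slack $\tfrac14 KT^2$: (i) the velocity-Verlet error in the position flow; and, for adjusted HMC, (ii) the mismatch of the acceptance events $A(x)$ and $A(y)$.

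To handle (i), I set $\delta_t^{\mathrm{num}}:=q_t^{\mathrm{num}}(x,\xi)-q_t^{\mathrm{num}}(y,\xi)$ and compare to the exact analogue by linearizing both flows along the segment $s\mapsto y+s(x-y)$. A standard local-truncation analysis of \eqref{velVerlet} combined with a discrete Gronwall inequality and the uniform bounds of (A2) on $\nabla^2 U,\nabla^3 U,\nabla^4 U$ gives
\[
\norm{\delta_T^{\mathrm{num}}}\ \le\ \norm{\delta_T^{\mathrm{ex}}}\ +\ C h^2 \norm{x-y}\,\Phi(R_2,\norm{\xi},T)
\]
for a polynomial $\Phi$. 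Taking expectation converts $\Phi$ into a polynomial in $R_2,\sqrt d,T$, and forcing the extra term to be $\le \tfrac14 KT^2\norm{x-y}$ fixes $h_0^{-1}=O(R_2)$ for unadjusted HMC. For (ii), set $\Delta H(x,\xi):=H(q_T^{\mathrm{num}}(x,\xi),p_T^{\mathrm{num}}(x,\xi))-H(x,\xi)$, which vanishes in the exact case and, for velocity Verlet, is $O(h^2)$ times a polynomial in $(R_2,\norm{\xi},T)$; a parallel Lipschitz argument yields $|\Delta H(x,\xi)-\Delta H(y,\xi)|\le Ch^2\norm{x-y}\,\Psi(R_2,\norm{\xi},T)$. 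Because both chains share $\mathcal U$,
\[
P(A(x)\,\triangle\, A(y))\ \le\ E\bigl[\,|e^{-\Delta H(x,\xi)}-e^{-\Delta H(y,\xi)}|\,\bigr]\ \le\ Ch^2\norm{x-y}\,E[\Psi(R_2,\norm{\xi},T)].
\]
Splitting $E[R'(x,y)]$ over $A(x)\cap A(y)$, $A(x)\triangle A(y)$, and $A(x)^c\cap A(y)^c$ — using (i) on the first event, the crude bound $R'\le\norm{x-y}+\norm{q_T^{\mathrm{num}}(y,\xi)-y}$ on the symmetric difference (which carries a small probability), and $R'=r$ on the double rejection — and inserting the estimates above produces an excess of order $Ch^2\,\mathrm{poly}(R_2,\sqrt d)/T^2$ over the exact contraction. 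Balancing against the $\tfrac14 KT^2$ budget yields $h_0^{-1}=O((1+T^{-1/2})(R_2^2+d))$: the $T^{-1/2}$ arises from dividing $h^2$ by $T^2$, while $R_2^2+d$ reflects the moment $E[\norm{\xi}^2]\sim d$ combined with a quadratic state dependence of $\Psi$.

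\textbf{Main obstacle.} The delicate case is adjusted HMC: ensuring that the asymmetric contribution of $A(x)\triangle A(y)$ does not destroy contractivity requires a sharp, state-dependent Lipschitz bound for $\Delta H$ jointly in $(x,\xi)$, which in turn depends essentially on the boundedness of $\nabla^3 U$ and $\nabla^4 U$ from (A2). A subsidiary technicality is that the polynomial bounds above must be applied on a ball containing the entire numerical trajectory on $[0,T]$, which is why $\max(\norm{x},\norm{y})\le R_2$ is imposed and $T/h\in\mathbb Z$ is required to synchronize the two integrations.
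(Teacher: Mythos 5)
Your route is genuinely different from the paper's. The paper never compares velocity Verlet to the exact flow: its Lemma \ref{C} proves the contraction $\norm{q_T(x,v)-q_T(y,v)}^2\le(1-KT^2/2)\norm{x-y}^2$ \emph{directly for the Verlet flow}, under the step-size restriction $(1+\norm{x}+\norm{v})h\le K/C$, and the proof of Theorem \ref{SCB} then splits on $\{\norm{\xi}\le R_2\}$ (after enlarging $R_2$ so that $P[\norm{\xi}>R_2]\le K/(70L)$) instead of taking moments of a trajectory-dependent error constant; the acceptance mismatch is handled exactly as you propose, via the Lipschitz-in-$(x,\xi)$ bound on $H\circ\phi_T-H$ (Lemma \ref{D}, Theorem \ref{AR}). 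Your perturbative scheme --- exact-flow contraction from Theorem \ref{SCA} plus an $O(h^2)$ bound on the difference of the first-variation flows of exact and Verlet dynamics --- is precisely the alternative the paper contrasts itself with, and in this synchronous, strongly convex regime it can be made to work. Be aware, though, that the Jacobian-comparison lemma you invoke appears nowhere in the paper and must be proved (local truncation plus discrete Gronwall is fine, using the bounds on $\nabla^3U,\nabla^4U$), and that to recover $h_0^{-1}=O(R_2)$ in the unadjusted case you must verify that your $\Phi$ grows at most quadratically in $(\norm{x},\norm{\xi})$, since you integrate over $\xi$ rather than truncating it as the paper does.

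There is one genuine gap in the adjusted case. In your decomposition the contraction is only harvested on $A(x)\cap A(y)$: summing the three pieces gives $E[R']\le r-\tfrac12 KT^2\, r\, P[A(x)\cap A(y)]+(\text{perturbation and mismatch terms})$, so the stated factor $1-\tfrac14KT^2$ additionally requires a lower bound on $P[A(x)\cap A(y)]$, i.e.\ an upper bound by a fixed constant on $P[A(x)^C]+P[A(y)^C]$. Your write-up never imposes this, and without it the negative term can be diluted arbitrarily. The fix is cheap and uses only tools you already invoke: $P[A(x)^C\mid\xi]\le(\Delta H(x,\xi))^+\le C_1T(1+T)h^2\max(\norm{x},\norm{\xi})^3$, so an additional smallness condition with $h^{-2}=O(R_2^3+d^{3/2})$ (the paper's $h_3$) yields $P[A(x)\cap A(y)]\ge 4/5$, and this condition is dominated by your $O((1+T^{-1/2})(R_2^2+d))$ requirement, so the claimed order of $h_0$ survives. (Minor slips, not affecting the argument: on $A(x)\cap A(y)^C$ the crude bound should read $R'-r\le\norm{q_T(x,\xi)-x}$, with the $y$-version on the complementary event, and the acceptance probabilities involve $\exp(-(\Delta H)^+)$ rather than $\exp(-\Delta H)$.)
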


The proof is given in Section
\ref{sec:proofsmain}. Key ingredients are the bound for contractivity of the proposal in Lemma \ref{C} and a bound for the probability that the proposal move gets accepted for one of the components of the coupling and rejected for the other component, cf.\ Theorem \ref{AR}.
For unadjusted HMC, the proof simplifies considerably, and $h_0$ can be chosen independently of the dimension.

\subsubsection{Contractivity without global convexity}
Even if we do not assume convexity, we can still obtain contractivity for $x,y$ at a bounded distance if we replace $r(x,y)=|x-y|$ by a modified metric. To this end we consider a distance function of the form
\begin{equation}\label{rho}
\rho(x,y) \ =\ f(r(x,y)),\qquad x,y\in\mathbb R^d,
\end{equation} 
where $f:[0,\infty )\to [0,\infty )$ is a concave function given by
\begin{equation}\label{f}
f(r) \ =\ \int_0^{r} \exp(-a\min (s, R_1))\ ds
\end{equation}
with parameters $a,R_1\in (0,\infty)$ to be specified below.\medskip 

We again fix an upper bound $h_1>0$ for the discretization step size $h$. We now replace \eqref{LTT} by the more stringent assumption
\begin{equation}\label{A0}
L(T+h_1)^2\ \le\ \min\,\left(\frac{K}{L},\frac{1}{4},\frac{1}{16\,\Lambda}\right)
\end{equation}
where $\Lambda := 16L\mathcal R^2$. Under this condition 
we obtain contractivity on average w.r.t.\ the metric $\rho$ if the parameters
$\gamma $, $a$ and $R_1$ defining the coupling and the metric are adjusted 
appropriately. Explicitly, we set
\begin{eqnarray}
\label{Cgamma}\gamma &:=& \min \left( T^{-1},\mathcal R^{-1}/4\right),\\
\label{Ca}a &:=&  T^{-1},\\
\label{CR1} R_1 &:=&\frac 52\, (\mathcal R+T).
\end{eqnarray} 

For exact HMC, we can prove a global contraction under Assumption \ref{A123}.

\begin{theorem}[Contractivity for exact HMC, general case] \label{thm:contrmainexact}
Suppose that Assumption \ref{A123} is satisfied, and fix
$T\in(0,\infty)$ such that $LT^2\le\min \left( \frac{K}{L},\frac{1}{4},\frac{1}{16\,\Lambda}\right)$. Let $\gamma$, $a$ and $R_1$ be given by \eqref{Cgamma}, \eqref{Ca} and \eqref{CR1}, respectively. Then for exact HMC, for all $x,y\in\R^d$,
\begin{eqnarray}\label{contrmain}
	E[f(R'(x,y))] &\leq& \left( 1 -c\right) f(r(x,y)),\qquad\qquad\mbox{where}
\end{eqnarray}
\begin{equation}\label{crate}
c\ =\ \frac{1}{10}\min\left( 1,\frac 12 KT^2 ( 1+\frac{\mathcal R}{T}) e^{-\mathcal R/(2T)}\right)\, e^{-2\mathcal R/T}.
\end{equation}
\end{theorem}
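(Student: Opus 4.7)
The plan is to split the analysis according to whether $r(x,y) \ge 2\mathcal R$, in which case the synchronous coupling of Section~\ref{sec:main_results:couplings:synch_coupling} is used, or $r(x,y) < 2\mathcal R$, in which case the maximal/reflection coupling of \eqref{**}--\eqref{eta} is used. For the far-field regime, the work is essentially done by Theorem~\ref{SCA}, which gives the deterministic bound $R'(x,y) \le (1 - \tfrac12 KT^2)\,r(x,y)$. Since $f$ is concave with $f(0)=0$ and $f'(s) \ge e^{-aR_1}$ for all $s$, the mean value theorem yields
\[
f(r) - f(R') \ \ge\  e^{-aR_1}(r - R') \ \ge\  e^{-aR_1}\tfrac12 KT^2\,f(r),
\]
using the trivial bound $f(r) \le r$. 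With $aR_1 = \tfrac52(1+\mathcal R/T)$ this matches the $KT^2$-contribution inside the minimum in \eqref{crate}.

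For the near-field regime $r < 2\mathcal R$, I would compute $E[f(R'(x,y))]$ by conditioning on the two branches of the velocity coupling \eqref{eta}. Let $p := 1 - d_{\TV}(N(0,I_d),N(\gamma z,I_d))$ denote the probability of the ``maximal'' event $\xi - \eta = -\gamma z$; the Gaussian TV bound gives $1 - p \le \gamma r/\sqrt{2\pi}$. On the maximal event, the initial velocity difference makes the free-flow difference equal to $z(1 - \gamma T)$, which vanishes when $\gamma T = 1$. I would then transport this to the full Hamiltonian flow via a Duhamel/Gronwall argument on the second-order ODE $\ddot\Delta_t = -(\nabla U(q_t(x,\xi)) - \nabla U(q_t(y,\eta)))$ for $\Delta_t = q_t(x,\xi) - q_t(y,\eta)$; under the smallness assumption $LT^2 \le 1/4$ this yields a pointwise bound of the form $R'(x,y) \le (1 - \gamma T + CLT^2)\,r$, still strictly contractive. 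On the reflection event, $\xi - \eta = 2(e\cdot\xi)e$ lives in the one-dimensional span of $e = z/|z|$, and the same Gronwall estimate gives $R'(x,y) \le r + 2T|e\cdot\xi|(1 + O(LT^2))$.

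To combine these, I would expand $f$ to second order around $r$: the prescription $f''(s) = -af'(s)$ on $(0,R_1)$ produces a negative quadratic term that absorbs the positive linear drift coming from the reflection event. Integrating against the conditional Gaussian law of $e\cdot\xi$ on the reflection event, whose total mass is of order $\gamma r$, the reflection contribution to $E[f(R')] - f(r)$ is of order $-c_0\, aT^2 \gamma r\,f'(r)$ plus smaller positive terms, while the maximal contribution is of order $-p(\gamma T - CLT^2)\,r\,f'(r)$. Since $f'(r) \ge e^{-ar} \ge e^{-2\mathcal R/T}$ on $[0,2\mathcal R]$ and $f(r) \le r$, summing these estimates and collecting constants yields $E[f(R')] \le (1-c)\,f(r)$ with $c$ exactly as in \eqref{crate}.

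The principal difficulty is the tight three-way balance among $\gamma$, $a$, $R_1$ and the smallness of $LT^2$: the negative $f''$-damping on the reflection event (of size $a = 1/T$) must dominate its own positive linear drift (of integrated size of order $\gamma$), and simultaneously the maximal-event contraction (of size $\gamma T$) must survive the potential-induced perturbation (of size $LT^2$). These constraints, together with the requirement $r < R_1$ on the near-field region, force the specific choices $\gamma = \min(T^{-1},(4\mathcal R)^{-1})$, $a = 1/T$, $R_1 = \tfrac52(\mathcal R + T)$. A secondary bookkeeping issue is that $f'(r)$ degenerates like $e^{-2\mathcal R/T}$ as $r \uparrow 2\mathcal R$, producing exactly the $e^{-2\mathcal R/T}$ prefactor in \eqref{crate} and requiring a careful matching of the two regimes at $r = 2\mathcal R$.
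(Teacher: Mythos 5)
Your overall architecture coincides with the paper's: for $r\ge 2\mathcal R$ you use the synchronous coupling together with Theorem~\ref{SCA} and concavity of $f$, and for $r<2\mathcal R$ the maximal/reflection velocity coupling \eqref{**}--\eqref{eta} with the parameter choices \eqref{Cgamma}--\eqref{CR1}. However, your treatment of the reflection event contains a genuine error. You claim that a second-order expansion of $f$ makes the reflection event contribute a \emph{negative} term of order $-c_0\,aT^2\gamma r f'(r)$. This cannot happen: conditionally on $\xi-\eta\neq-\gamma z$, the component $u=e\cdot\xi$ has unnormalized density $\left(\varphi_{0,1}(u)-\varphi_{0,1}(u+\gamma r)\right)^{+}$, which is supported on $u\ge-\gamma r/2$ and carries essentially all of its (total order $\gamma r$) mass on $u>0$ with conditional mean of order one; hence the new distance is close to $|r+2Tu|>r$ there, and the reflection event produces a \emph{positive} contribution of leading order $+\,\mathrm{const}\cdot\gamma T r f'(r)$. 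Unlike reflection coupling of diffusions, the conditional displacement is not mean-zero, so there is no It\^o-type cancellation for $f''<0$ to exploit; moreover, since $a\cdot 2T|u|=2|u|$ is of order one, a quadratic Taylor expansion of $f$ around $r$ is not a controlled approximation in the first place. In the paper's proof (carried out in the proof of Theorem~\ref{thm:maincontraction} and specialized to exact HMC), \emph{all} of the contraction comes from the maximal event, on which $R'\le(1-\tfrac34\gamma T)r$ by Lemma~\ref{A} and \eqref{eq:A}--\eqref{eq:B}; the reflection loss is then shown to be strictly smaller by capping $f(R'\wedge R_1)-f(r)\le a^{-1}f'(r)$, and by controlling the overshoot past $R_1$ separately using the linear tail of $f$ together with \eqref{eq:E}--\eqref{eq:F}. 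The explicit numerical balance $-\tfrac{27}{40}+\tfrac25+\tfrac1{16}\le-\tfrac15$ is what produces $c_2\ge\tfrac1{10}e^{-2\mathcal R/T}$. Your proposal neither performs this balance nor addresses the overshoot beyond $R_1$, and with the sign of the reflection term corrected, the argument as written does not close.

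A secondary, quantitative gap is in the far-field regime: bounding $f(r)-f(R')\ge e^{-aR_1}(r-R')$ together with $f(r)\le r$ only gives a rate $\tfrac12 e^{-5/2}KT^2e^{-5\mathcal R/(2T)}$, which is strictly smaller (already at $\mathcal R=0$) than the far-field part $\tfrac1{20}KT^2\left(1+\mathcal R/T\right)e^{-5\mathcal R/(2T)}$ of the stated constant $c$ in \eqref{crate}. To recover the factor $\left(1+\mathcal R/T\right)$ you need the sharper bound $\inf_{r>0} rf'(r)/f(r)\ \ge\ \max(1,aR_1)\,e^{-aR_1}$, which follows from $f(R_1)\le\min(R_1,a^{-1})$ rather than $f(r)\le r$, exactly as in the paper's estimate of $c_1$. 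So even in the regime where your argument is structurally sound, it proves the theorem only with a somewhat smaller contraction rate than the one claimed.
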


For numerical HMC, a similar contraction property holds on every finite ball in $\mathbb R^d$.
The contraction rate does not depend on the radius $R_2$ of the ball, but on a larger ball, a smaller step size is required to ensure contractivity.

\begin{theorem}[Contractivity for numerical HMC, general case] \label{thm:maincontraction}
Suppose that Assumption \ref{A123} is satisfied, and fix
$T,R_2,h_1\in(0,\infty)$ such that \eqref{A0} holds. Let $\gamma$, $a$ and $R_1$ be given by \eqref{Cgamma}, \eqref{Ca} and \eqref{CR1}, respectively. Then there exists $h_\star >0$ 
depending only on $K$, $L$, $M$, $N$, $\mathcal R$, $T$, $R_2$ and $d$ s.t.\ for any $h\in(0,\min (h_\star ,h_1)]$ with
$T/h\in\Z$ and for any $x,y\in\R^d$ with 
$\max(\norm{x},\norm{y})\leq R_2$, \eqref{contrmain} holds  with the contraction rate $c$ given by \eqref{crate}.
Moreover, for fixed $K$, $L$, $M$ and $N$, $h_\star $ can be chosen such that
$h_{\star}^{-1}$ is of order $O(R_2)$ for unadjusted, and of order $O((1+T^{-1/2}+\mathcal R^{1/2})(R_2^2+d))$ for adjusted numerical HMC.
\end{theorem}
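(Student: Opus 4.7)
My plan is to mirror the proof of Theorem~\ref{thm:contrmainexact} step by step, treating velocity Verlet as an $O(h^2)$ perturbation of the exact Hamiltonian flow and, in the adjusted case, controlling additionally the probability of an asymmetric accept/reject decision for the coupling~\eqref{**}. The key observation is that the parameters $(\gamma,a,R_1)$, the case split at $r(x,y)=2\mathcal R$, and the target rate $c$ in~\eqref{crate} are identical to the exact-HMC setting; only the error estimates need to be refined and absorbed into the choice of $h_\star$.

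For $r(x,y)\ge 2\mathcal R$ I would apply the synchronous-coupling bound of Theorem~\ref{SCB} to get $E[R'(x,y)]\le (1-\tfrac14 KT^2)\,r(x,y)$, then pass to $f$-distance using $f(s)\le f(r)+f'(r)(s-r)$ and $f'\ge e^{-aR_1}$ on $[0,R_1]$: this gives an $f$-contraction with rate dominating the $c$ of~\eqref{crate} under the hypothesis~\eqref{A0}. For $r(x,y)<2\mathcal R$ I would use the coupling~\eqref{**} and split on whether $\eta=\xi-\gamma z$ (the maximal-coupling event) or $\eta$ is the reflected velocity. The only change from the exact-HMC argument is that the contractivity bound of Lemma~\ref{C} on the proposal is replaced by its numerical counterpart, in which velocity Verlet introduces an additive error of order $h^2 T (1+|\xi|)$ over the $n=T/h$ steps, with constants depending on $K,L,M,N,R_2$ via~(A2) and $\max(|x|,|y|)\le R_2$. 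Taking expectation over $\xi\sim \mathcal{N}(0,I_d)$ brings in $E|\xi|\le\sqrt d$; choosing $h_\star$ of the stated order absorbs this error into a fraction of $c\,f(r(x,y))$.

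For adjusted numerical HMC there is an additional contribution from the event $A(x)\triangle\hat A(y)$, which produces a term of order $P[A(x)\triangle\hat A(y)]\cdot f(R_1)$ in $E[f(R'(x,y))]$. Theorem~\ref{AR} provides a quantitative bound for this probability in terms of $h$, $R_2$ and $d$, and requiring the resulting term to be dominated by $\tfrac12 c\,f(r(x,y))$ is what yields the step-size bound $h_\star^{-1}=O((1+T^{-1/2}+\mathcal R^{1/2})(R_2^2+d))$. For unadjusted HMC this step is vacuous and only the velocity-Verlet flow error enters, giving the dimension-free bound $h_\star^{-1}=O(R_2)$. Combining the two regimes of $r(x,y)$ with these estimates reproduces~\eqref{contrmain} with the same rate $c$.

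The main obstacle is the adjusted case, specifically the fine control of the difference of Metropolis acceptance probabilities for the two coupled chains under the joint law of $(\xi,\mathcal U,\tilde{\mathcal U})$. The velocity-Verlet energy error $H(q_T,p_T)-H(x,\xi)$ involves $\nabla^3 U$ and $\nabla^4 U$ (hence the role of~(A2)) and, due to the $|\xi|^2\sim d$ scaling, its $L^2$-size grows like $\sqrt d\,h^2$; a naive bound on $e^{-\Delta H(x,\xi)}-e^{-\Delta H(y,\eta)}$ would pick up the full dimension factor. Recovering the proper $\sqrt h$ rate and the dimension factor stated in the theorem requires using the common uniform random variable $\mathcal U$ in~\eqref{*A} and~\eqref{hatA} and exploiting the joint event structure carefully; this is the content of Theorem~\ref{AR} and constitutes the main technical work that distinguishes the adjusted from the unadjusted case.
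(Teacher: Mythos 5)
Your overall architecture is the paper's: the case split at $r(x,y)=2\mathcal R$, Theorem \ref{SCB} with concavity of $f$ in the synchronous regime, the coupling \eqref{**} split into the event $\{\xi-\eta=-\gamma z\}$ and the reflection event for $r<2\mathcal R$, and Theorem \ref{AR} to handle accept/reject in the adjusted case. But the mechanism you propose for the discretization is not the paper's and, as stated, would fail. You treat velocity Verlet as an $O(h^2)$ perturbation of the exact flow, producing an \emph{additive} error of order $h^2T(1+|\xi|)$ whose expectation brings in $\sqrt d$, and you then "absorb" it into a fraction of $c\,f(r(x,y))$. An additive error that is not proportional to $|x-y|$ cannot be absorbed uniformly in $r$: for $r$ smaller than the error the inequality \eqref{contrmain} is destroyed, and letting $h_\star$ depend on $r$ is not allowed. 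Moreover the $\sqrt d$ factor would contaminate the unadjusted case and contradict the dimension-free claim $h_\star^{-1}=O(R_2)$ that you must prove. The paper never compares Verlet with the exact flow: Lemma \ref{A} bounds the difference of two \emph{Verlet} trajectories with coupled initial velocities by terms proportional to $\max(|x-y|,|x-y+T(\xi-\eta)|)$, so in the regime $r<2\mathcal R$ condition \eqref{eq:B} (a consequence of \eqref{A0}) is the only requirement on $h$ for unadjusted HMC, and the $O(R_2)$ constraint comes solely from Lemma \ref{C} (its hypothesis \eqref{A5}) used inside Theorem \ref{SCB}. (Also note the sign: the coupling sets $\eta=\xi+\gamma z$, i.e.\ $\xi-\eta=-\gamma z$; with your $\eta=\xi-\gamma z$ the positions separate rather than contract.)

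In the adjusted case your bound $P[A(x)\triangle\hat A(y)]\cdot f(R_1)$, compared against $\tfrac12 c f(r)$, does yield \emph{some} admissible $h_\star$, but not the stated order: since $f(R_1)$ is of order $\min(R_1,T)$ while the comparison must hold down to $r\to 0$ where $f(r)\sim r$ and the rate carries the factor $e^{-2\mathcal R/T}$, this route loses a factor exponential in $\mathcal R/T$, whereas the theorem asserts $h_\star^{-1}=O((1+T^{-1/2}+\mathcal R^{1/2})(R_2^2+d))$ with only polynomial dependence on $\mathcal R$. The paper avoids this by bounding the term via concavity, $E[f(R')-f(r);A(x)\triangle\hat A(y)]\le f'(r)\,E[R'-r;A(x)\triangle\hat A(y)]$, using $R'-r\le |x|+2T|\xi|$ resp.\ $|y|+2T|\eta|$ on the one-sided events, and then invoking the weighted moment bound \eqref{AR9} (which rests on Lemma \ref{lem:X} because on the reflection event $|\xi-\eta|=2|e\cdot\xi|$ is unbounded); the factor $f'(r)$ then cancels against the target $\gamma T r f'(r)$, and the requirement reduces to $h^{-2}=O((1+\mathcal R)(R_2^4+d^2))$. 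Two further points you omit: the one-sided rejection probabilities $P[A(x)^C]+P[\hat A(y)^C]$ (bounds \eqref{AR1}--\eqref{AR4}) must be subtracted inside the contraction term on the good event, which is where the constraint $h_5^{-2}=O(R_2^3+d^{3/2})$ enters; and in the synchronous regime the crude estimate $f'\ge e^{-aR_1}$ does not dominate the $(1+\mathcal R/T)$ factor in \eqref{crate} — you need $rf'(r)/f(r)\ge aR_1e^{-aR_1}$, i.e.\ $f(R_1)\le 1/a$, as in the paper.
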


The proofs of Theorems \ref{thm:contrmainexact} and \ref{thm:maincontraction} are given in Section
\ref{sec:proofsmain}.


The following simple example demonstrates that our results provide applicable bounds in multimodal situations if $T$ is adjusted appropriately:

\begin{example}[Gaussian Mixture]
Consider a mixture of two Gaussians with means $\pm 2 \sigma$ and variances $\sigma^2$ where $\sigma >0$.   The corresponding potential  is \[
U(x) = - \log\left( \exp\left(-\dfrac{(x-2 \sigma)^2}{2 \sigma^2} \right) + \exp\left(-\dfrac{(x+2 \sigma)^2}{2 \sigma^2} \right) \right).
\] 
In this case, $\sigma^2U''(x) = {1 - 4 \operatorname{sech}\left( {2 x}/{\sigma} \right)^2 } $, $\, L = \sup |U''| = 3 / \sigma^2$, and $U''(x) \ge 2/(3 \sigma^2)$ for all $|x| > \sigma$, which allows us to choose $K=2/(3 \sigma^2)$ and $\mathcal R = 2 \sigma$. Hence,
the condition on $L T^2$ in Theorem~\ref{thm:contrmainexact} reduces to
$L T^2 \le {1}/{3072}$, and the rate in \eqref{crate} reduces to \[
c\ =\ \frac{1}{10}\min\left( 1,\frac 13 \frac{T^2}{\sigma^2}  ( 1+4 \frac{\sigma}{T}) e^{-2 \sigma/T}\right)\, e^{-8 \sigma/T} \;.
\]  If we choose $T$ proportional to $\sigma$, then this rate is constant.
\end{example}

\begin{remark}[Scope of coupling approach]
There are various possible extensions of the basic coupling approach presented here. These
include componentwise coupling on product spaces, two-scale couplings on high dimensional spaces, and geometry-adapted couplings. See \cite{Eb2016A, Zimmer, Wang}
for corresponding approaches for reflection coupling of diffusion processes -- we expect that similar extensions exist for our couplings of HMC. 
For example, it might be sufficient to apply asynchronous coupling to a few components and couple the other components synchronously \cite{Zimmer}, and for models with a nontrivial
geometry such as the banana shaped distribution in Figure \ref{fig:coupling_sample_paths},
the coupling should be designed with straight lines replaced by geodesics in an
appropriate geometry \cite{cranston}.

In Assumption \ref{A123}, at least the global Lipschitz condition on $\nabla U (x)$ is crucial for our proofs. Nevertheless, convergence 
may still hold under a relaxed condition if the step size is adjusted appropriately, possibly
depending on $x$. The asymptotic strong convexity assumption can be replaced by a Lyapunov condition, see below. Finally,
Example \ref{example:bivariatenormal} shows that condition \eqref{A0} on $T$ can not be avoided in general. For appropriate classes of models, it could possibly be improved by considering an adequate non-Euclidean geometry. More generally, we point out that even if
some of the assumptions are not satisfied, the coupling is still well-defined, and one can check empirically
if two copies with distant starting points approach each other. Of course, this does not 
guarantee convergence but it may serve as a diagnostic tool for HMC.
\end{remark}

\subsubsection{Contractivity under Foster-Lyapunov condition}
\label{sec:ConLyapunov}
It is possible to replace the asymptotic strong convexity condition in Assumption 
\ref{A123} by the Lyapunov drift condition in Assumption \ref{A1245}. In this case we consider
a semimetric of the form 
\begin{equation}\label{rhoeps}
\rho_\epsilon (x,y)\ = \ \sqrt{f\left(\min (|x-y|,2\mathcal R)\right)\cdot \left(1+\epsilon\Psi (x)+\epsilon\Psi (y)\right)}
\end{equation}
where $\epsilon >0$ is a positive constant, $\mathcal R$ is defined by \eqref{DefcalR}, and $f$ is again given by \eqref{f}. The idea of using corresponding semimetrics for deriving contraction properties under Lyapunov conditions goes back to \cite{HairerMattinglyScheutzow} and \cite{Butkovsky}, see also \cite{EGZHarris}.\smallskip

Similarly as above, we fix an upper bound $h_1>0$ for the discretization step size $h$ such that
\begin{equation}\label{A0L}
L(T+h_1)^2\ \le\ \min\,\left(\frac{1}{4},\frac{1}{16\,\Lambda}\right)
\end{equation}
where $\Lambda := 16L\mathcal R^2$. We choose the parameters $\gamma$, $a$ and $R_1$ as in \eqref{Cgamma}, \eqref{Ca} and
\eqref{CR1} respectively, and we set
\begin{equation}\label{Cepsilon}
\epsilon \ :=\ \frac{1}{40\, A}\, e^{-2\mathcal R/T}.
\end{equation}
Then under a global Lyapunov condition, we can prove global contractivity on average w.r.t.\ the semimetric
$\rho_\epsilon$ for exact and for unadjusted numerical HMC .
\begin{theorem}[Contractivity for exact and unadjusted HMC under Lyapunov condition] \label{thm:contrlyapexact}
Consider exact HMC or unadjusted numerical HMC with step size $h\in (0,h_1]$ such that $T/h\in\Z$.  Suppose that Assumption \ref{A1245} is satisfied with $R_2=\infty$, and \eqref{A0L} holds.
Let $\gamma$, $a$, $R_1$ and $\epsilon$ be given by \eqref{Cgamma}, \eqref{Ca}, \eqref{CR1} and \eqref{Cepsilon}, respectively. Then for all $x,y\in\R^d$,
\begin{eqnarray}\label{contrmain2}
	E[\rho_\epsilon (X'(x,y),Y'(x,y))] &\leq& \left( 1 -c\right) \rho_\epsilon(x,y),\qquad\qquad\mbox{where}
\end{eqnarray}
\begin{equation}\label{crate2}
c\ =\ \frac{1}{80}\min\left( 10\, \lambda\, ,\, e^{-2\mathcal R/T}\right).
\end{equation}
\end{theorem}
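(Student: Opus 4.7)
The plan is to combine the local coupling contractivity on the ``small set'' $\{|x-y|<2\mathcal R\}$ with the Foster--Lyapunov drift condition (A5) through a semimetric argument in the spirit of Hairer--Mattingly--Scheutzow and Butkovsky. The square-root form of $\rho_\epsilon$ is chosen so that the two ingredients separate cleanly under Cauchy--Schwarz: writing $\rho_\epsilon^2(x,y) = A(x,y)\,B(x,y)$ with $A(x,y) := f(\min(|x-y|, 2\mathcal R))$ and $B(x,y) := 1 + \epsilon\Psi(x) + \epsilon\Psi(y)$, one obtains
\[
E[\rho_\epsilon(X'(x,y), Y'(x,y))] \;\le\; \sqrt{E[A(X',Y')]}\,\sqrt{E[B(X',Y')]},
\]
so the task reduces to bounding each factor and combining.

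For the Lyapunov factor, applying (A5) to each marginal (using that $X'(x,y)\sim \pi(x,\cdot)$ and $Y'(x,y)\sim\pi(y,\cdot)$) directly gives $E[B(X',Y')] \le 1 + \epsilon(1-\lambda)(\Psi(x)+\Psi(y)) + 2\epsilon\alpha$. For the distance factor, I would split into two cases. When $|x-y| < 2\mathcal R$, the analysis underlying Theorem~\ref{thm:contrmainexact}---specifically the part relying only on the asynchronous coupling and not on the asymptotic strong convexity (A3)---yields a local contraction $E[f(|X'-Y'|)]\le (1-c_0)f(|x-y|)$ with rate $c_0 = e^{-2\mathcal R/T}/10$; monotonicity of $f$ and the truncation at $2\mathcal R$ in $A$ then give $E[A(X',Y')] \le (1-c_0)A(x,y)$. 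When $|x-y| \ge 2\mathcal R$, the distance is saturated at $f(2\mathcal R)$, so only the trivial bound $E[A(X',Y')] \le A(x,y)$ is available and contraction must come entirely from the Lyapunov factor.

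The key closing observation in the outside case is that, by the definition \eqref{DefcalR} of $\mathcal R$, the inequality $|x-y| \ge 2\mathcal R$ forces $|x| \ge \mathcal R$ or $|y| \ge \mathcal R$, hence $V := \Psi(x)+\Psi(y) \ge 4\alpha/\lambda$. In particular $2\epsilon\alpha \le \epsilon\lambda V/2$, so
\[
\frac{E[B(X',Y')]}{B(x,y)} \;\le\; 1 - \frac{\epsilon\lambda V/2}{1 + \epsilon V} \;\le\; 1 - \tfrac{1}{40}\min\!\left(e^{-2\mathcal R/T},\, 10\lambda\right),
\]
where the last step splits on $\epsilon V \le 1$ versus $\epsilon V \ge 1$ and uses the calibration of $\epsilon$ in \eqref{Cepsilon} together with $V \ge 4\alpha/\lambda$. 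Taking square roots via $\sqrt{1-t}\le 1-t/2$ yields the stated rate $c = (1/80)\min(e^{-2\mathcal R/T},10\lambda)$ in the outside case. In the inside case, the ratio is instead controlled by $(1-c_0)(1+2\epsilon\alpha)$, and $\epsilon$ in \eqref{Cepsilon} is precisely calibrated so that $2\epsilon\alpha$ is small compared to $c_0$, making this case non-binding.

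The main obstacle, in my view, is isolating from the material preceding the theorem a clean statement of the local coupling contraction $c_0$ valid throughout $\{|x-y|<2\mathcal R\}$: one must verify that the first term in the minimum of \eqref{crate} arises only from the inside-coupling construction of Section~\ref{sec:main_results:couplings} and is independent of (A3), and that the same contraction persists for unadjusted numerical HMC provided $h\le h_1$ is chosen small enough relative to the other parameters. Once that local contraction is in hand, the argument above is essentially a one-line balancing calculation in $\epsilon$.
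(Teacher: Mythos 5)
Your proposal is correct and follows essentially the same route as the paper's proof (given there for Theorem~\ref{thm:lyapcontraction} and carried over to Theorem~\ref{thm:contrlyapexact}): Cauchy--Schwarz on the product semimetric $\rho_\epsilon=\sqrt{FG}$, the case split at $|x-y|=2\mathcal R$, the local coupling contraction with rate $\tfrac{1}{10}e^{-2\mathcal R/T}$ (which indeed comes only from the asynchronous-coupling part of the proof of Theorem~\ref{thm:maincontraction} and not from (A3), and needs no step-size restriction beyond $h\le h_1$ since the rejection events are empty), and the drift condition combined with $\Psi(x)+\Psi(y)\ge 4\alpha/\lambda$ forced by \eqref{DefcalR} when $|x-y|\ge 2\mathcal R$. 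Your handling of the outside case (bounding $1-\tfrac{\epsilon\lambda V/2}{1+\epsilon V}$ by splitting on $\epsilon V\lessgtr 1$) is only a cosmetic variant of the paper's weighted-maximum estimate and yields the same constant $c=\tfrac{1}{80}\min(10\lambda,e^{-2\mathcal R/T})$.
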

For adjusted numerical HMC, we again obtain contractivity on a given ball for sufficiently small step sizes.

\begin{theorem}[Contractivity for numerical HMC under Lyapunov condition] \label{thm:lyapcontraction}
Consider adjusted numerical HMC with step size $h\in (0,h_1]$ such that $T/h\in\Z$. Suppose that Assumption \ref{A1245} is satisfied for some $R_2\in (0,\infty )$, and \eqref{A0L} holds. Let $\gamma$, $a$, $R_1$ and $\epsilon$ be given by \eqref{Cgamma}, \eqref{Ca}, \eqref{CR1} and \eqref{Cepsilon}, respectively. Then there exists $h_\star >0$ 
depending only on $L$, $M$, $N$, $\mathcal R$, $R_2$ and $d$ s.t.\ for any $h\in(0,\min (h_\star ,h_1)]$ with
$T/h\in\Z$ and for any $x,y\in\R^d$ with 
$\max(\norm{x},\norm{y})\leq R_2$, \eqref{contrmain} holds  with the contraction rate $c$ given by \eqref{crate}.
Moreover, for fixed $L$, $M$ and $N$, $h_\star $ can be chosen such that
$h_{\star}^{-1}$ is of order $O((1+\mathcal R^{1/2})(R_2^2+d))$ for adjusted numerical HMC.
\end{theorem}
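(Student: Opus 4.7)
The plan is to transfer the argument of Theorem~\ref{thm:contrlyapexact} to the adjusted numerical setting by absorbing the rejection errors into the contraction rate, mirroring the passage from Theorem~\ref{thm:contrmainexact} to Theorem~\ref{thm:maincontraction}. Since $\rho_\epsilon$ is a square root, Jensen's inequality reduces the desired estimate to the bound
\begin{equation*}
E\bigl[f(\min(|X'-Y'|,2\mathcal R))\,(1+\epsilon\Psi(X')+\epsilon\Psi(Y'))\bigr]\ \le\ (1-c)^2\,\rho_\epsilon(x,y)^2,
\end{equation*}
with $c$ the rate from \eqref{crate2} (the rate appearing in the theorem statement via \eqref{crate} should be read in this Lyapunov context as \eqref{crate2}).

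I would then reproduce the two-regime analysis already used for Theorem~\ref{thm:contrlyapexact}. When $|x-y|\ge 2\mathcal R$, the synchronous coupling of Section~\ref{sec:main_results:couplings:synch_coupling} is applied, $f$ saturates at $f(2\mathcal R)$, and the required contraction is delivered entirely by the Lyapunov drift (A5) applied componentwise; the hypothesis $\max(|x|,|y|)\le R_2$ places both components inside the ball on which (A5) is valid. When $|x-y|<2\mathcal R$, the asynchronous coupling \eqref{**}--\eqref{eta} produces $f$-contraction via Lemma~\ref{C}, and the Lyapunov factor is again controlled by (A5). With the calibration \eqref{Cgamma}--\eqref{Cepsilon}, both regimes yield the unadjusted rate of \eqref{crate2}, exactly as in the proof of Theorem~\ref{thm:contrlyapexact}; this portion of the argument transfers almost verbatim.

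The genuinely new ingredient is the control of the rejection events. By Theorem~\ref{AR} together with the refined rejection estimates that are used in the adjusted part of Theorem~\ref{thm:maincontraction}, the probability that the two coupled proposals have different accept/reject outcomes, and the probability that either of them is rejected, are bounded by $h^2$ times a polynomial in $R_2$, $d$, $T$ and $\mathcal R$. On such an event the coupling distance and the Lyapunov weights change by at most an $O(1)$ multiplicative factor, while on the accept/accept event the unadjusted bound applies directly. Choosing $h_\star$ small enough to absorb this $O(h^2)$ correction into the rate $c$ then yields the claim, and tracking the dependence of the rejection probability on the parameters gives the stated order $h_\star^{-1}=O\bigl((1+\mathcal R^{1/2})(R_2^2+d)\bigr)$.

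The main obstacle I anticipate lies in the interaction of the Lyapunov weight with the rejection correction: hypothesis (A5) only bounds $(\pi\Psi)(x)$ under the full adjusted kernel $\pi$, not the conditional expectation of $\Psi(X')$ on the acceptance event, so the naive decomposition into accept and reject pieces does not preserve the drift. The clean way around this is to keep the Lyapunov factor inside the unconditional expectation throughout, use $f\le f(2\mathcal R)$ to decouple it from the metric factor whenever necessary, and only replace the adjusted expectation by its unadjusted counterpart inside the small-probability rejection correction; the smallness of $\epsilon$ in \eqref{Cepsilon} is precisely what makes these substitutions controllable and forces the dimension-dependent step-size bound.
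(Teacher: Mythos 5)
Your skeleton is the right one: treat the regimes $|x-y|<2\mathcal R$ and $|x-y|\ge 2\mathcal R$ separately, reuse the small-distance part of the proof of Theorem~\ref{thm:maincontraction} (coupling \eqref{**} plus the rejection bounds of Theorem~\ref{AR}) for the metric factor, let the drift (A5) handle the Lyapunov factor, use saturation of $f$ at $f(2\mathcal R)$ together with $\Psi(x)+\Psi(y)>4\alpha/\lambda$ (from \eqref{DefcalR}) in the far regime, and absorb the $O(h^2)$ rejection terms into $h_\star$ with the stated order; your reading of the rate as \eqref{crate2} is also the intended one. The gap is in how you combine the two factors. Writing $F'=f(\min(|X'-Y'|,2\mathcal R))$ and $G'=1+\epsilon\Psi(X')+\epsilon\Psi(Y')$, your Jensen step reduces the claim to the product bound $E[F'G']\le (1-c)^2\rho_\epsilon(x,y)^2$, and nothing in the hypotheses gives this: (A5) controls $E[\Psi(X')]$ only, not the cross term $E[F'\,\Psi(X')]$, and the correlation between $F'$ and $G'$ is uncontrolled. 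Concretely, on the reflection event $\xi-\eta\neq-\gamma z$ (probability of order $\gamma|x-y|$ by Lemma~\ref{lem:X}) $F'$ can jump to $f(2\mathcal R)$, and your claim that the Lyapunov weights change by at most an $O(1)$ multiplicative factor on such events is unjustified (e.g.\ for exponential $\Psi$ and large $|\xi|$). Your proposed repair, capping $F'$ by $f(2\mathcal R)$, is lossless only when $|x-y|\ge 2\mathcal R$; in the near regime it destroys the contraction, because $\epsilon f(2\mathcal R)\,E[\Psi(X')+\Psi(Y')]$ does not scale with $f(|x-y|)$ as $|x-y|\to 0$, so the product bound cannot be recovered this way.

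The paper never needs the product: since $\rho_\epsilon=\sqrt{FG}$, Cauchy--Schwarz gives $E[\sqrt{F'G'}]\le E[F']^{1/2}E[G']^{1/2}$, so only the two \emph{separate} one-step bounds are required — $E[F']\le(1-c_2)F$ in the near regime (exactly the case-(ii) argument of Theorem~\ref{thm:maincontraction}, valid for $h\le\min(h_5,h_6)$) resp.\ the deterministic bound $F'\le f(2\mathcal R)\le F$ in the far regime, together with $E[G']\le(1+2\epsilon\alpha)G$ resp.\ $E[G']\le(1-c_3)G$ from (A5). This is why the square root appears in \eqref{rhoeps}, and it also dissolves the accept/reject difficulty you worried about: (A5) is stated for the adjusted kernel $\pi$ itself, so $E[G']=1+\epsilon(\pi\Psi)(x)+\epsilon(\pi\Psi)(y)$ is bounded without ever decomposing over acceptance events, and rejections enter only through $E[F']$ via Theorem~\ref{AR}. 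Two smaller corrections: in the near regime the $f$-contraction rests on Lemma~\ref{A} (free-flow comparison for the asynchronous coupling), not Lemma~\ref{C}, which requires $|x-y|\ge2\mathcal R$ and the strong convexity (A3) that is not assumed here; and in the far regime you should make explicit that the strict contraction of the Lyapunov factor uses $\Psi(x)+\Psi(y)>4\alpha/\lambda$, since the drift alone only yields $E[G']\le(1+2\epsilon\alpha)G$.
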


The proofs of Theorems \ref{thm:contrlyapexact} and \ref{thm:lyapcontraction} are given in Section
\ref{sec:proofsmain}.


\subsubsection{Dimension dependence of contraction rates}
\label{sec:dimension}

Dependence on the dimension of the bounds for the contraction rates derived above arises by two different effects:\medskip\\
\emph{a) Dimension dependence due to model properties}\smallskip\\
The rate $c$ in Theorems \ref{thm:contrmainexact}  and \ref{thm:maincontraction} does not depend explicitly on the dimension $d$. However, 
a dimension dependence arises if the underlying parameters in Assumptions
\ref{A123} and \ref{A1245} are dimension dependent. For example,
the parameter $\mathcal R$ in (A3) may depend on $d$ which would then cause an implicit (and possibly exponential) dimension dependence of the bound $c$ for the contraction rate in Theorems \ref{thm:contrmainexact} and \ref{thm:maincontraction}.
Similarly, the constant $\alpha$ in (A5) will usually depend on $d$ and
cause an exponential degeneration of the rates in Theorems \ref{thm:contrlyapexact} and
\ref{thm:lyapcontraction}. 
This possible degeneration in high dimensions
can not be avoided in the general setup considered here,
and it occurs in a very similar form for other Markov processes with invariant measure
$\mu$, see e.g.\ \cite{Eb2016A,EbGuZi2016,EGZHarris}. Indeed, note that 
our assumptions do not exclude models with a phase transition in the limit $d\to\infty$.

For more specific model classes, modifications of our coupling approach can avoid a possible dimension dependence. We mention two such situations that will be analysed in detail in
forthcoming work, see also the also the related results for other classes of Markov processes in
\cite{EberleMH, Eb2016A, Zimmer}.
\begin{itemize}
\item For models with weak interactions (e.g.\ perturbations of product measures), a 
componentwise coupling similar to the one suggested for Langevin diffusions in \cite{Eb2016A} can be applied to obtain
dimension free contraction rates for exact or unadjusted numerical Hamiltonian Monte Carlo.
\item For probability measures $\mu $ that have a sufficiently nice density w.r.t.\ a Gaussian measure on $\mathbb R^d$ or on an infinite dimensional Hilbert space, a two scale 
coupling approach has been applied in \cite{Zimmer} to derive dimension free contraction
rates in the case of overdamped Langevin diffusions with invariant measure $\mu$.
Such measures arise for example in transition path sampling and Bayesian inverse 
problems \cite{HairerStuartVoss, DashtiStuart}. In forthcoming work, we show that in combination with the results 
in this paper, the approach in \cite{Zimmer} can be carried over to HMC.
\end{itemize}
\emph{b) Dimension dependence due to numerical discretization}\smallskip\\
For numerical HMC, the dimension also affects the upper bound $h_\star$ for the discretization step size. This is relevant for resulting upper bounds of the computational complexity which are typically of order $1/(h_\star c)$. 

In the results above, this dimension dependence of the maximal step size does not occur for unadjusted HMC. Although in this case, the contraction
properties of the Markov chain are completely dimension free, a dimension dependence 
arises because the step size has to be chosen sufficiently small to ensure that the 
invariant measure of the unadjusted Markov chain is close to $\mu$. The analysis of this dimension dependence requires different techniques that are not the content of this work.
Under rather strong assumptions (including strong convexity), quite sharp bounds for the resulting total dimension dependence for several integrators have been derived by Mangoubi and Smith \cite{mangoubi2017rapid}. In a more general context, sharp bounds for the distance between the invariant measures are derived
in the forthcoming work \cite{durmuseberle2019}.

On the other hand, for adjusted HMC, our bounds require 
$h^{-1}$ of order $d$, thus causing a dimension dependence of at least the same order
for the computational complexity. It is an open question if these bounds can be improved. 
Scaling limits and the results in \cite{mangoubi2017rapid} suggest that at least under 
more restrictive assumptions, a better dimension dependence (possibly up to $d^{1/4}$)
might hold for the computational complexity. On the other hand, we are not aware of 
a result proving a contraction under a bound on $h^{-1}$ of order $d^\alpha$ with
$\alpha <1$.

\subsection{Quantitative bounds for distance to the invariant measure}
\label{sec:QB}

For exact HMC, Theorem \ref{thm:contrmainexact} establishes global contractivity of the transition kernel $\pi (x,dy)$ w.r.t.\ the Kantorovich ($L^1$ Wasserstein) distance
$$\mathcal W_\rho (\nu ,\eta )\ =\  \inf_{\gamma\in C(\nu ,\eta )}\int\rho (x,y)\,\gamma (dx\, dy)$$
on probability measures $\nu ,\eta$ on $\mathbb R^d$. Here the infimum is over all couplings $\gamma$ of $\nu$ and $\eta$. Since the metric $\rho$ is
comparable to the Euclidean Distance on $\mathbb R^d$, contractivity w.r.t.\ 
$\mathcal W_\rho$ immediately implies a quantitative bound on the standard
$L^1$-Wasserstein distance
$$\mathcal W^1 (\nu\pi^n ,\mu )\ =\  \inf_{\gamma\in C(\nu\pi^n ,\mu )}\int |x-y|\,\gamma (dx\, dy)$$
between the law of the HMC chain after $n$ steps and the invariant probability measure $\mu$.

\begin{corollary}\label{cor:QBHMC}
Suppose that Assumption \ref{A123} is satisfied, and let $T\in (0,\infty )$ such that
\begin{equation}
\label{A0A}
LT^2\ \le\ \min \left( \frac{K}{L},\frac{1}{4},\frac{1}{256\,L\mathcal R^2}\right).
\end{equation}
Then for any $n\in\mathbb N$ and for any probability measures $\nu ,\eta$ on $\mathbb R^d$,
\begin{eqnarray}
\label{QBHMC1}
\mathcal W_\rho (\nu\pi^n,\eta\pi^n)& \le & e^{-cn}\mathcal W_\rho (\nu ,\eta ),\qquad\text{and}\\
\label{QBHMC1a}
\mathcal W^1 (\nu\pi^n,\eta\pi^n)& \le & Me^{-cn}\mathcal W^1 (\nu ,\eta ),
\end{eqnarray}
where $c$ is given by \eqref{crate}, and $M=\exp\left( \frac 52 (1+\mathcal R/T)\right)$. In particular, for a given 
constant $\epsilon\in (0,\infty )$, the standard $L^1$ Wasserstein distance $\Delta (n)=\mathcal W^1(\nu\pi^n ,\mu )$ w.r.t.\ $\mu$ after $n$ steps of the chain with initial distribution $\nu$ satisfies $\Delta (n)\le\epsilon$ provided
\begin{equation}
\label{QBHMC2}
n\ \ge\ \frac 1c\, \left( \frac 52+\frac{5\mathcal R}{2T}+\log\frac{\Delta (0)}{\epsilon}\right) . 
\end{equation}
\end{corollary}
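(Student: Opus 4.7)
The plan is to derive all three assertions from the one-step contraction provided by Theorem~\ref{thm:contrmainexact} and then specialize to $\eta =\mu$. First I would check that \eqref{A0A} is exactly the hypothesis of Theorem~\ref{thm:contrmainexact}: since $\Lambda =16L\mathcal R^2$, one has $1/(16\Lambda)=1/(256\,L\mathcal R^2)$, so the bounds on $LT^2$ coincide. With $\gamma$, $a$ and $R_1$ chosen as in \eqref{Cgamma}--\eqref{CR1}, Theorem~\ref{thm:contrmainexact} yields the pointwise bound
\[
E[\rho(X'(x,y),Y'(x,y))]\ \le\ (1-c)\,\rho(x,y)\qquad\text{for all }x,y\in\mathbb R^d,
\]
where $(X'(x,y),Y'(x,y))$ is an explicit measurable coupling of $\pi(x,\cdot)$ and $\pi(y,\cdot)$.

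Next I would lift this pointwise bound to a one-step Kantorovich contraction $\mathcal W_\rho (\nu \pi ,\eta\pi )\le (1-c)\mathcal W_\rho (\nu ,\eta )$ by a standard gluing argument: take an optimal coupling $\Gamma $ of $\nu $ and $\eta $ for $\mathcal W_\rho $, then form the coupling of $\nu\pi $ and $\eta\pi $ obtained by sampling $(x,y)\sim \Gamma $ and then $(X'(x,y),Y'(x,y))$ using independent noise; averaging the pointwise bound over $\Gamma $ gives the assertion. Iterating $n$ times and using $1-c\le e^{-c}$ yields \eqref{QBHMC1}.

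To transfer the contraction to the standard $\mathcal W^1$, I would compare the two metrics using the explicit form of $f$. Since $f'(s)=\exp(-a\min(s,R_1))$ satisfies $e^{-aR_1}\le f'(s)\le 1$, integrating gives $e^{-aR_1}|x-y|\le\rho(x,y)\le |x-y|$ for all $x,y$. Plugging in $a=T^{-1}$ and $R_1=\tfrac 52(\mathcal R+T)$ yields $e^{aR_1}=M$, hence on probability measures $\mathcal W_\rho\le \mathcal W^1\le M\,\mathcal W_\rho$. Combining this with \eqref{QBHMC1} produces \eqref{QBHMC1a}.

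Finally, I would specialize \eqref{QBHMC1a} to $\eta =\mu$. Since $\mu$ is invariant for $\pi$ (as noted after \eqref{eq:transitionkernel}), one has $\mu\pi^n=\mu$, so $\Delta (n)\le Me^{-cn}\Delta (0)$. Requiring $Me^{-cn}\Delta (0)\le\epsilon$ and using $\log M=\tfrac 52(1+\mathcal R/T)=\tfrac 52+\tfrac{5\mathcal R}{2T}$ gives the lower bound on $n$ stated in \eqref{QBHMC2}. All the hard analytic work lives in Theorem~\ref{thm:contrmainexact}; once that is granted, the only technical point is the measurability of the glued couplings, which is automatic because $(X'(x,y),Y'(x,y))$ is an explicit measurable function of the noise $(\xi,\mathcal U,\tilde{\mathcal U})$. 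I do not anticipate any substantive obstacle here.
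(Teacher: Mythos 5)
Your proposal is correct and follows essentially the same route as the paper: the one-step coupling bound of Theorem~\ref{thm:contrmainexact} (whose hypothesis indeed coincides with \eqref{A0A} since $1/(16\Lambda)=1/(256\,L\mathcal R^2)$) is lifted to a Kantorovich contraction, the metric comparison $e^{-aR_1}|x-y|\le\rho (x,y)\le |x-y|$ gives \eqref{QBHMC1a} with $M=e^{aR_1}=e^{\frac 52(1+\mathcal R/T)}$, and specializing to $\eta =\mu$ with $\mu\pi^n=\mu$ yields \eqref{QBHMC2}. The only cosmetic difference is in how the one-step bound is iterated: the paper runs the coupling transition as a Markov chain started from an \emph{arbitrary} coupling of $\nu$ and $\eta$ and uses a supermartingale argument (Lemma~\ref{lem:supermart}), taking the infimum over initial couplings only at the end, whereas you re-select an optimal coupling at each step; both are standard and equivalent here.
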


The corollary is a rather direct consequence of Theorem \ref{thm:contrmainexact}. The proof is given in Section \ref{sec:LYAP}.

\begin{remark}[Kinetic bounds]
One remarkable feature of the result in Corollary \ref{cor:QBHMC} is that for a 
given initial error $\Delta (0)$, the number of steps required to stay below a certain error bound $\epsilon$ can be chosen universally provided $T$ is chosen proportional to $\mathcal R$. Notice, however, that by Condition 
\eqref{A0A}, it is only possible to choose $T$ proportional to $\mathcal R$ with a fixed proportionality constant if $L\mathcal R^2$ is bounded by a fixed constant~!
\end{remark}

\begin{remark}[Quantitative bounds for ergodic averages]
MCMC methods are often applied to approximate expectation values w.r.t.\ the target distribution by ergodic averages of the Markov chain. Our results (e.g.\ \eqref{QBHMC1}) directly imply completely explicit bounds for
biasses and variances, as well as explicit concentration inequalities for these ergodic averages in the case of HMC. Indeed, the general results by Joulin and Ollivier \cite{JoulinOllivier} show that such bounds follow directly from an $L^1$ Wasserstein contraction w.r.t.\ an arbitrary metric $\rho$, which is precisely the statement shown above.
 \end{remark}

We now return to numerical HMC. Here, our main results in Theorems \ref{thm:maincontraction} and \ref{thm:lyapcontraction} only establish contractivity w.r.t.\ $\mathcal W_\rho$ on a ball of given radius $R_2$. In order to derive bounds for the distance to the invariant measure of the law after $n$ steps, we additionally have to control exit 
probabilities from the ball. This is achieved by another Lyapunov bound that we first
state in a general form. Suppose that $\pi (x,dy)$ is the transition kernel of a 
Markov chain on a complete separable metric space $(S,\rho )$, and let $\mathcal W_\rho$ denote the corresponding Kantorovich distance on 
probability measures on $S$.

\begin{assumption}\label{ALyap}
The following conditions are satisfied for a constant $C\in (0,\infty )$ and measurable functions $\psi ,\varphi :S\to (0,\infty )$:
 	\begin{itemize}
 	\item[(C1)] \emph{Main Lyapunov condition:} There is a constant $\lambda\in [1,\infty )$ such that
 	$$(\pi\psi )(x)\ \le\ \lambda\psi (x)\qquad\text{for any } x\in S\text{ s.t.\ }\psi (x)\le C.$$
 \item[(C2)] \emph{Additional global Lyapunov condition:} There is a constant $\beta\in [1,\infty )$ s.t.
 	$$(\pi\varphi )(x)\ \le\ \beta\varphi (x)\quad\text{and}\quad
 	\rho (x,y)\ \le\ \varphi (x)+\varphi (y)\qquad\text{for any } x,y\in S.$$
 \item[(C3)] \emph{Local contractivity:} There are a measurable map $(X',Y'):S\times S\times\Omega\to S\times S$ defined on a probability space $(\Omega ,\mathcal A, P)$ and a constant $c\in (0,\infty )$ such that for any $x,y\in S$,
 $(X'(x,y,\cdot ),Y'(x,y,\cdot ))$ is a realization of a coupling of $\pi (x,\cdot )$
 and $\pi (y,\cdot )$ satisfying
 $$E\left[ \rho (X'(x,y,\cdot ),Y'(x,y,\cdot ))\right]\ \le\ e^{-c}\rho (x,y)\qquad
 \text{if }\psi (x)\le C\text{ and }\psi (y)\le C.$$
\end{itemize}\end{assumption}

The proof of the following theorem is given in Section \ref{sec:LYAP}.

\begin{theorem}\label{thm:LYAP}
Suppose that Assumption \ref{ALyap} is satisfied. Then for any $n\in\mathbb N$ and for any probability measures $\nu ,\eta$ on $(S,\mathcal B(S))$,
\begin{equation}
\label{25A}
\mathcal W_\rho (\nu\pi^n,\eta\pi^n)\ \le\ e^{-cn}\mathcal W_\rho (\nu ,\eta )\,
+\, \beta^n\lambda^{n-1}(\int\psi\, d\nu+ \int\psi\, d\eta )\, \delta (C),
\end{equation}
where
\begin{equation}
\label{deltaC}
\delta (C)\ :=\ \sup\left\{ \frac{\varphi (x)+\varphi (y)}{\psi (x)+\psi (y)}\, :\, 
x,y\in S\text{ s.t.\ }\psi (x)>C\text{ or }\,\psi (y)>C\right\}\, .
\end{equation}
\end{theorem}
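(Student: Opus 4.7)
The approach is to construct a coupled Markov chain $(X_k,Y_k)_{k\ge 0}$ on $S\times S$ starting from an optimal $\rho$-Kantorovich coupling $(X_0,Y_0)\sim\gamma_0$ of $\nu$ and $\eta$, and then updating $(X_k,Y_k)\mapsto(X_{k+1},Y_{k+1})$ using the locally contractive coupling from~(C3) whenever both $\psi(X_k)\le C$ and $\psi(Y_k)\le C$, and via any coupling of $\pi(X_k,\cdot)$ and $\pi(Y_k,\cdot)$ otherwise. Introducing the first exit time
\[
\tau\;:=\;\inf\bigl\{k\ge 0:\psi(X_k)>C\ \text{or}\ \psi(Y_k)>C\bigr\},
\]
I split the expected coupling distance as
\[
E\bigl[\rho(X_n,Y_n)\bigr]\;=\;E\bigl[\rho(X_n,Y_n);\tau>n\bigr]\;+\;\sum_{k=0}^{n}E\bigl[\rho(X_n,Y_n);\tau=k\bigr].
\]

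For the first summand, the event $\{\tau>n\}$ forces the contractive coupling to be used at every transition $j\mapsto j+1$ for $j\le n-1$. Applying the conditional bound $E[\rho(X_{j+1},Y_{j+1})\mid\mathcal F_j]\le e^{-c}\rho(X_j,Y_j)$ on $E_j:=\{\psi(X_j)\le C,\psi(Y_j)\le C\}$ together with the containment $\{\tau>j+1\}\subseteq\{\tau>j\}\subseteq E_j$ and iterating the tower property from $j=n-1$ down to $j=0$ delivers $E[\rho(X_n,Y_n);\tau>n]\le e^{-cn}\mathcal W_\rho(\nu,\eta)$. For each post-exit term indexed by $k$, I would use (C2) to write $\rho\le\varphi+\varphi$, iterate $E[\varphi(X_n)\mid\mathcal F_k]\le\beta^{n-k}\varphi(X_k)$, and conclude $E[\rho(X_n,Y_n);\tau=k]\le\beta^{n-k}E[(\varphi(X_k)+\varphi(Y_k));\tau=k]$. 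On the exit event $\{\tau=k\}$, the definition of $\delta(C)$ gives $\varphi(X_k)+\varphi(Y_k)\le\delta(C)(\psi(X_k)+\psi(Y_k))$. Finally, the $\psi$-mass at the exit is controlled by propagating~(C1) \emph{backwards} along the pre-exit trajectory: on $\{\tau\ge j+1\}$ one has $\psi(X_j)\le C$, hence $(\pi\psi)(X_j)\le\lambda\psi(X_j)$, and iterating the tower property together with the containment $\{\tau\ge j+1\}\subseteq\{\tau\ge j\}$ from $j=k-1$ down to $0$ yields $E[\psi(X_k);\tau\ge k]\le\lambda^k\int\psi\,d\nu$ (and analogously for $Y$). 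Summation over $k$ then produces a contribution of order $\delta(C)\bigl(\int\psi\,d\nu+\int\psi\,d\eta\bigr)\sum_{k=0}^n\beta^{n-k}\lambda^k$, which is absorbed into the claimed expression $\beta^n\lambda^{n-1}\delta(C)\bigl(\int\psi\,d\nu+\int\psi\,d\eta\bigr)$ after using $\beta,\lambda\ge 1$ to bound each summand and combining with the boundary contributions.

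The main obstacle is the purely local nature of~(C1): no in-expectation growth bound on $\psi$ is available once the coupling has exited $\{\psi\le C\}^{\otimes 2}$, so a naive induction on $n$ (which would require bounding $\int\psi\,d(\nu\pi^n)$) cannot close. The resolution is exactly the decomposition by the \emph{first} exit time $\tau$: every backward iteration of~(C1) then takes place entirely within the sub-level set where that condition is in force. The global Lyapunov~(C2) on $\varphi$ is responsible for propagating the accumulated error forward after the exit, while $\delta(C)$ acts as the conversion factor between the $\varphi$- and $\psi$-scales at the instant of exit.
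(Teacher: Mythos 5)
Your construction and decomposition are essentially those of the paper's proof (couple with the locally contractive coupling up to the first exit time $\tau$ from $\{\psi\le C\}\times\{\psi\le C\}$, use the contraction before the exit, and use (C2), the conversion factor $\delta(C)$ at the exit, and (C1) along the pre-exit trajectory afterwards), and your pre-exit estimate as well as the per-$k$ bounds $E[\rho(X_n,Y_n);\tau=k]\le\beta^{n-k}\delta(C)\,E[\psi(X_k)+\psi(Y_k);\tau=k]$ are correct. The genuine gap is in the final bookkeeping. You bound each $E[\psi(X_k);\tau=k]$ by the \emph{full} initial mass $\lambda^{k}\int\psi\,d\nu$ and then sum over $k=0,\dots,n$, claiming that $\sum_{k=0}^{n}\beta^{n-k}\lambda^{k}$ is ``absorbed'' into $\beta^{n}\lambda^{n-1}$ because $\beta,\lambda\ge 1$. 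That inequality is false: for $\beta=\lambda=1$ the sum equals $n+1$ while $\beta^{n}\lambda^{n-1}=1$, and in general the sum exceeds $\beta^{n}\lambda^{n-1}$ by a factor of order $n$ (or of order $\lambda$ when $\beta=1$). The defect is double counting: the events $\{\tau=k\}$ are disjoint, yet each of your $n+1$ estimates spends the entire budget $\int\psi\,d\nu$. As written, your argument only yields \eqref{25A} with $\beta^{n}\lambda^{n-1}$ replaced by $\sum_{k=0}^{n}\beta^{n-k}\lambda^{k}$, which is a strictly weaker statement.

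The repair is exactly what the paper's stopped supermartingale accomplishes. Keep the indicator of $\{\tau=k\}$ to the end: since $\{\tau\ge j+1\}\subseteq\{\psi(X_j)\le C\}$, the process $N_m=\psi(X_{m\wedge\tau})\,\lambda^{-(m\wedge\tau)}$ is a nonnegative supermartingale, so $\sum_{k=0}^{n-1}\lambda^{-k}E[\psi(X_k);\tau=k]\le E[N_{n-1}]\le\int\psi\,d\nu$, and hence $\sum_{k=0}^{n-1}\beta^{n-k}E[\psi(X_k);\tau=k]\le\beta^{n}\lambda^{n-1}\int\psi\,d\nu$ (likewise for the $Y$-component with $\eta$). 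In addition, the event $\{\tau=n\}$ must be assigned to the contractive part rather than to the post-exit sum: on $\{\tau=n\}$ the contractive coupling was used at all steps $0,\dots,n-1$, so the supermartingale argument still gives the factor $e^{-cn}$ there (the paper splits at $\{n\le\tau\}$ versus $\{\tau<n\}$); keeping $k=n$ in the post-exit sum costs $\lambda^{n}$ instead of $\lambda^{n-1}$ even after the above fix. With these two changes your argument closes and reproduces \eqref{25A} with the stated constant; the ingredients you identified (first-exit decomposition, forward propagation via (C2), conversion via $\delta(C)$, backward control via (C1)) are the right ones, but the disjointness/optional-stopping step is the missing idea that produces the exact prefactor $\beta^{n}\lambda^{n-1}$.
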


Theorem \ref{thm:LYAP} can be applied to bound the distance to the invariant measure $\mu$ after $n$ steps of adjusted numerical HMC. 
Suppose that $\pi$ is
the corresponding transition kernel for a given discretization step size $h>0$,
and let $\rho$ denote the metric on $S=\mathbb R^d$ defined by
\eqref{rho}, \eqref{f}, \eqref{Ca} and \eqref{CR1}. We will then apply 
Theorem \ref{thm:LYAP} with Lyapunov functions of the form $\varphi (x)=2Td^{1/2}+|x|$ and $\psi (x)=\exp (U(x)^{2/3})$. The right side of \eqref{25A} can then be bounded by a given constant $\epsilon >0$ by first choosing $n$ such that the first term is bounded by $\epsilon /2$, and then choosing $C$ such that the second term is bounded by $\epsilon /2$ as well. As a consequence of
Theorem \ref{thm:maincontraction} and Theorem \ref{thm:LYAP}, we can prove that a similar number of steps as for exact HMC is also sufficient for an 
approximation of the invariant measure by numerical HMC, provided $h$ is
chosen sufficiently small.

\begin{theorem}
\label{thm:QBNHMC}
Suppose that Assumption \ref{A123} is satisfied. Let $T,h_1\in (0,\infty )$ such that \eqref{A0} holds, let $\nu$ be a probability measure on $\mathbb R^d$,
and let $\Delta (n)=\mathcal W^1(\nu\pi^n,\mu)$ denote the standard $L^1$~Wasserstein distance to the invariant probability measure after $n$ steps of adjusted numerical HMC with initial distribution $\nu$. Let $\epsilon \in (0,\infty )$ and $n\in\mathbb N$ such that
\begin{equation}
\label{QBNHMCn}
n\ \ge\ \frac{1}{c}\left(\frac{5}{2}+\frac{5\mathcal R}{2T}+\log^+\left(\frac{2\Delta (0)}{\epsilon}\right)\right)
\end{equation}
where $c$ is given by \eqref{crate}. Then there exists $h_{\star\star}>0$ 
depending only on $K$, $L$, $M$, $N$, $\mathcal R$, $T$, $d$, $\nu$ and $n$, such that for any
$h\in (0,\min (h_{\star\star},h_1))$ with $T/h\in\mathbb Z$,
\begin{equation}
\label{W1eps}
\Delta (n)\ \le\ \epsilon .
\end{equation}
Furthermore, $h_{\star\star}$ can be chosen such that for fixed values of
$K$, $L$, $M$, $N$, $h_{\star\star}^{-1}$ is of order 
$$O\left( \left(1+T^{-\frac 12}+\mathcal R^{\frac 12}\right)\left(d^{\frac 32}n^{\frac 32}+\left(1+{\mathcal R}/{T}\right)^{\frac 32 } \left(d+A(\nu)+\log\epsilon^{-1}\right)^{\frac 32}+\mathcal R^2\right)\right) ,$$ 
where
$A(\nu)=\log\int\exp (U^{2/3})\, d\nu$.
\end{theorem}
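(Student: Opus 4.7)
I will combine Theorem~\ref{thm:maincontraction} with the Lyapunov-type bound of Theorem~\ref{thm:LYAP}, applied on $S=\mathbb R^d$ equipped with the metric $\rho$ defined by \eqref{rho}--\eqref{CR1}, and with Lyapunov functions
\begin{equation*}
\varphi(x)\,=\,2Td^{1/2}+|x|,\qquad \psi(x)\,=\,\exp\bigl(U(x)^{2/3}\bigr).
\end{equation*}
The final bound $\Delta(n)\le\epsilon$ will be produced by splitting $\epsilon$ between the two terms on the right hand side of \eqref{25A}.

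First I verify the three hypotheses of Theorem~\ref{thm:LYAP}. For (C3), Assumption~(A3) forces $U(x)\ge\tfrac{K}{2}(|x|-\mathcal R)_+^2-\mathrm{const}$, so the sublevel set $\{\psi\le C\}$ is contained in the Euclidean ball of some radius $R_2=R_2(C)=O(\mathcal R+(\log C)^{3/4})$; Theorem~\ref{thm:maincontraction} then delivers contractivity at rate $c$ of \eqref{crate} on that set provided $h\le\min(h_\star(R_2),h_1)$. For (C2) a standard one-step bound for velocity Verlet gives $E[|X'(x)-x|]\le Td^{1/2}+O(T^2L)(|x|+1)$, so $\pi\varphi\le\beta\varphi$ with $\beta$ close to $1$ under \eqref{A0}, while $\rho(x,y)\le|x-y|\le\varphi(x)+\varphi(y)$ is immediate. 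For (C1), approximate energy conservation for velocity Verlet yields $U(q_T(x,\xi))\le H(x,\xi)+|\Delta H|$, which combined with subadditivity of $r\mapsto r^{2/3}$ gives
\begin{equation*}
U(q_T(x,\xi))^{2/3}\,\le\,U(x)^{2/3}+\bigl(\tfrac12|\xi|^2\bigr)^{2/3}+o(1);
\end{equation*}
taking expectations and inserting the acceptance factor produces $\pi\psi(x)\le\lambda\,\psi(x)$ on $\{\psi\le C\}$ with $\lambda=E[\exp((|\xi|^2/2)^{2/3})]=e^{O(d)}$.

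Next, I apply Theorem~\ref{thm:LYAP} with $\eta=\mu$ (using $\mu\pi^n=\mu$) and pass from $\mathcal W_\rho$ to $\mathcal W^1$ via the equivalence $e^{-aR_1}r\le f(r)\le r$, which gives $\mathcal W^1\le M\,\mathcal W_\rho$ with $M=\exp(\tfrac52(1+\mathcal R/T))$. The result is
\begin{equation*}
\Delta(n)\,\le\,Me^{-cn}\Delta(0)+M\beta^n\lambda^{n-1}(\nu\psi+\mu\psi)\,\delta(C).
\end{equation*}
The first term is at most $\epsilon/2$ by the choice of $n$ in \eqref{QBNHMCn}. Since $\varphi$ grows linearly while $\psi(x)\gtrsim\exp(c_0|x|^{4/3})$ for large $|x|$, the ratio test in \eqref{deltaC} gives $\delta(C)=O((\log C)^{3/4}/C)$, so the second term is $\le\epsilon/2$ as soon as $\log C$ exceeds a quantity of order $n\log(\beta\lambda)+\log M+A(\nu)+\log(\mu\psi)+\log\epsilon^{-1}$. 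Translating this lower bound on $\log C$ into the required $R_2\sim\mathcal R+(\log C)^{3/4}$ and plugging it into the scaling $h_\star^{-1}=O((1+T^{-1/2}+\mathcal R^{1/2})(R_2^2+d))$ from Theorem~\ref{thm:maincontraction} produces the announced order: the $d^{3/2}n^{3/2}$ contribution comes from $n\log\lambda\sim nd$; the $(1+\mathcal R/T)^{3/2}(d+A(\nu)+\log\epsilon^{-1})^{3/2}$ contribution collects $\log M$, $A(\nu)$ and $\log\epsilon^{-1}$; and the $\mathcal R^2$ residual is the requirement $R_2\ge\mathcal R$.

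The decisive step is the Lyapunov estimate (C1) for adjusted numerical HMC, where one must control $\pi\psi$ through a velocity Verlet step together with the accept/reject map while keeping $\lambda$ of order $e^{O(d)}$. The exponent $2/3$ in $\psi$ is chosen precisely so that, on the one hand, the $d$-dimensional Gaussian moment $E[\exp((|\xi|^2/2)^{2/3})]$ remains finite, and on the other hand $\psi$ dominates $\varphi$ strongly enough to make $\delta(C)$ decay polynomially in $C$. Everything else is quantitative bookkeeping.
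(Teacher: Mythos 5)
Your proposal is correct and follows essentially the same route as the paper: the paper likewise combines Theorem \ref{thm:LYAP} (with $\varphi(x)=|x|+2Td^{1/2}$ and $\psi(x)=\exp(U(x)^{2/3})$, the Lyapunov conditions being verified in Lemma \ref{lem:LYAP}) with the local contraction of Theorem \ref{thm:maincontraction} on the ball of radius $R_2=\mathcal R+\sqrt{2/K}\,(\log C)^{3/4}$, splits $\epsilon$ between the two terms of \eqref{25A}, and performs the same bookkeeping in $\log C$ to obtain the stated order of $h_{\star\star}^{-1}$. The only cosmetic difference is in your sketch of (C1): the Verlet energy error (which contains a $|\xi|^3$ contribution, not uniformly $o(1)$ in $\xi$) forces a slightly enlarged constant, the paper taking $\lambda=E[\exp(|\xi|^{4/3}+\tfrac13|\xi|^2+1)]$ rather than $E[\exp((|\xi|^2/2)^{2/3})]$, which is still $e^{O(d)}$ and leaves your conclusion unchanged.
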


We finally remark that for unadjusted HMC, the above result does not apply in the same form. In this case, there is an additional error due to the fact that the invariant measure does not coincide with $\mu$. The careful control of this error requires different techniques that are out of scope of this work, see \cite{mangoubi2017rapid,durmuseberle2019}.

\section{A priori estimates}\label{sec:apriori}
In this section we state several bounds for the Hamiltonian flow, for the coupling, and for acceptance-rejection probabilities that will be crucial in the proof of our main result. The proofs of all the results stated in this section are
included in Section \ref{sec:proofs}.

\subsection{Bounds for the Hamiltonian flow and for velocity
Verlet}\label{sec:main_results:boundsforHMCandVV} 
%
%

In the following, we consider $t\in[0,\infty)$ and $h\in[0,1]$ such that $t/h\in
\mathbb Z$ if $h>0$. We assume throughout that Assumption \ref{A123} is satisfied, and
\begin{eqnarray}
	L(t^2+ht) &\leq& 1. \label{0}
\end{eqnarray}
Recall that $\phi_t=(q_t,p_t)$ denotes the Hamiltonian flow for $h=0$,
and the flow of the velocity Verlet integrator for $h>0$. The proofs of the
following statements are provided in Section
\ref{sec:proofs}. 

\begin{lemma}\label{0A}
For any $x,v\in\R^d$,
\begin{eqnarray}
\label{6}
\ \ \max_{s\leq t} \norm{q_s(x,v)-(x+sv)}&\leq& L (t^2+th) 
\max(\norm{x},\norm{x+tv}),\quad\mbox{and}
\\ \label{7}
\max_{s\leq t} \norm{p_s(x,v)-v} &\leq &
Lt\, \max_{s\le t}|q_s(x,v)|\\
\nonumber &\le &
Lt(1 + L(t^2+th)) \max(\norm{x},\norm{x+tv}).
\end{eqnarray}
In particular,
\begin{eqnarray}\label{8}
\max_{s\leq t}\norm{q_s(x,v)} &\leq& 2 \max(\norm{x},\norm{x+tv}),\qquad\mbox{and}\\
\label{9}
\max_{s\leq t}\norm{p_s(x,v)} &\leq&
\norm{v} + 2 L t  \max(\norm{x},\norm{x+tv}).
\end{eqnarray}
\end{lemma}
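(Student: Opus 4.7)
The plan is to derive Duhamel-type integral (respectively telescoping) representations for $q_s-(x+sv)$ and $p_s-v$ in both the exact ($h=0$) and velocity-Verlet ($h>0$) regimes, and close a simple Gronwall-type inequality. The only analytic input beyond the flow equations themselves is $|\nabla U(z)|\le L|z|$, which follows from (A2) together with $\nabla U(0)=0$ (a consequence of (A1)), and the standing assumption \eqref{0}.

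For the exact Hamiltonian flow, two-fold integration of \eqref{eq:Hamilton} gives
$$q_s-(x+sv)=-\int_0^s(s-u)\,\nabla U(q_u)\,du,\qquad p_s-v=-\int_0^s\nabla U(q_u)\,du.$$
For velocity Verlet I would first telescope the two-step recursion produced by \eqref{velVerlet} at the grid times $t_j=jh$, obtaining $p_{t_k}-v=-\frac{h}{2}\sum_{j=0}^{k-1}(\nabla U(q_{t_j})+\nabla U(q_{t_{j+1}}))$ and an analogous formula for $q_{t_k}-(x+t_kv)$, and then extend to arbitrary $s\in[0,t]$ via the affine interpolation of $q$ and $p$ that \eqref{velVerlet} dictates inside each step $[t_k,t_{k+1}]$. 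Setting $M:=\max_{u\le t}|q_u|$ and $A:=\max(|x|,|x+tv|)$, the bound $|\nabla U(q_u)|\le LM$ combined with the algebraic identity $t_k^2/2+(s-t_k)t_k\le s^2/2$ (equivalent to $(s-t_k)^2\ge 0$) and $(s-t_k)h\le sh$ yields the uniform estimate
$$\max_{s\le t}|q_s-(x+sv)|\ \le\ \tfrac12 L(t^2+th)\,M.$$

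Since $x+sv$ lies on the segment from $x$ to $x+tv$, $|x+sv|\le A$, and the triangle inequality then gives $M\le A+\tfrac12 L(t^2+th)M$; under \eqref{0} this rearranges to $M\le 2A$. Substituting back proves \eqref{6}, and \eqref{8} follows from $|q_s|\le A+L(t^2+th)A\le 2A$. For the velocity, the integral/telescoped formula yields $|p_s-v|\le Ls\,\max_{u\le s}|q_u|\le Lt\,M$, the first inequality in \eqref{7}; inserting the sharper $M\le(1+L(t^2+th))A$ obtained from \eqref{6} gives the second. Finally \eqref{9} is immediate from $|p_s|\le|v|+|p_s-v|\le|v|+2LtA$ via $M\le 2A$.

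The only technical nuisance is the Verlet bookkeeping: one must use $t/h\in\Z$ to ensure that every evaluation $\nabla U(q_{\lceil u\rceil})$ for $u\le t$ stays within the time window where $q$ has been bounded, and pair the half-step drift correction $-(h/2)\nabla U(q_{\lfloor u\rfloor})$ with the telescoped trapezoidal $p$-sum carefully enough for the two to combine into the unified $\tfrac12 L(t^2+th)M$ above. No finer estimate is needed.
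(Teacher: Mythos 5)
Your proposal is correct and follows essentially the same route as the paper: bound the deviation from the free motion $x+sv$ via an integral/telescoped representation of the flow together with $|\nabla U(z)|\le L|z|$, obtain $\max_{s\le t}|q_s-(x+sv)|\le\tfrac12 L(t^2+th)\max_{s\le t}|q_s|$, and then absorb using $L(t^2+ht)\le 1$ and $\max_{s\le t}|x+sv|=\max(|x|,|x+tv|)$, with the velocity bounds following from the trapezoidal update. The only difference is presentational: the paper handles $h=0$ and $h>0$ simultaneously through the continuous-time Verlet formulation \eqref{velVerlet} with $\lfloor\cdot\rfloor,\lceil\cdot\rceil$, while you treat the exact flow by Duhamel and Verlet by telescoping plus within-step interpolation, which yields the same constants.
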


\begin{lemma}\label{A}
	For any $x,y,u,v\in\R^d$,
	\begin{eqnarray}		
\lefteqn{\max_{s\leq t} \norm{q_s(x,u)-q_s(y,v)-(x-y)-s(u-v)}}\nonumber\\
\label{10} 		& \leq &   L (t^2+th)\max\left(\norm{x-y},\norm{(x-y)+t(u-v)}\right),\qquad\mbox{and}
	\\
\nonumber\lefteqn{	\max_{s\leq t} \norm{p_s(x,u)-p_s(y,v)-(u-v)} 
\ \le\ Lt\, \max_{s\le t}|q_s(x,u)-q_s(y,v)|}
\\ &\leq  &  
		Lt(1+L(t^2+th))\max\left(\norm{x-y},\norm{(x-y)+t(u-v)}\right) .\label{11}
	\end{eqnarray}
\end{lemma}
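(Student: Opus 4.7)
\medskip

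\noindent\textbf{Proof proposal for Lemma \ref{A}.}

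The plan is to run the same argument used for Lemma \ref{0A} but applied to the difference processes $\Delta q_s := q_s(x,u) - q_s(y,v)$ and $\Delta p_s := p_s(x,u)-p_s(y,v)$. The key observation is that, both for the exact flow \eqref{eq:Hamilton} and for velocity Verlet \eqref{velVerlet}, the pair $(\Delta q_s,\Delta p_s)$ satisfies a system of the same structural form as the one for $(q_s,p_s)$, except that the force $-\nabla U(q_s)$ is replaced by $-\bigl[\nabla U(q_s(x,u))-\nabla U(q_s(y,v))\bigr]$. By (A2), this difference-force satisfies
\[
\bigl|\nabla U(q_s(x,u))-\nabla U(q_s(y,v))\bigr|\ \le\ L\,|\Delta q_s|,
\]
so it plays exactly the role that $L\,|q_s|$ played in Lemma \ref{0A} (where we used \eqref{A1.5}). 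Hence the proof carries over with $x$ replaced by $x-y$ and $v$ by $u-v$.

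Concretely, I would integrate the equations twice to obtain, for the exact case,
\[
\Delta q_s\ =\ (x-y)+s(u-v)\,-\,\int_0^s(s-r)\bigl[\nabla U(q_r(x,u))-\nabla U(q_r(y,v))\bigr]\,dr,
\]
and an analogous discrete Duhamel formula involving the piecewise-constant evaluations at $\lb{r}_h,\ub{r}_h$ for Verlet. Using the convex-combination identity $(x-y)+s(u-v)=(1-s/t)(x-y)+(s/t)\bigl[(x-y)+t(u-v)\bigr]$ valid for $s\in[0,t]$, the linear term is bounded by $\max(|x-y|,|(x-y)+t(u-v)|)$. Combining this with the bound on the double integral and the Lipschitz bound above, I get
\[
\max_{s\le t}|\Delta q_s|\ \le\ \max\bigl(|x-y|,|(x-y)+t(u-v)|\bigr)\,+\,\tfrac12 L(t^2+th)\,\max_{s\le t}|\Delta q_s|.
\]
The assumption \eqref{0} ($L(t^2+th)\le 1$) then absorbs the second term and yields the a priori estimate $\max_{s\le t}|\Delta q_s|\le 2\max(|x-y|,|(x-y)+t(u-v)|)$, which is the analogue of \eqref{8}. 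Feeding this back into the Duhamel formula gives \eqref{10}.

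For \eqref{11}, I would similarly integrate the equation for $\Delta p_s$ once to get
\[
\Delta p_s\ =\ (u-v)\,-\,\int_0^s\bigl[\nabla U(q_r(x,u))-\nabla U(q_r(y,v))\bigr]\,dr
\]
(with the analogous Verlet expression), so that $|\Delta p_s-(u-v)|\le L t\,\max_{r\le t}|\Delta q_r|$, which is the first inequality of \eqref{11}; inserting the a priori bound on $|\Delta q_r|$ gives the second. The main obstacle is really just the bookkeeping in the Verlet case, where one must carefully handle the rounded times $\lb{r}_h,\ub{r}_h$ and the half-step corrections in \eqref{velVerlet}; these produce the extra $ht$ term in the estimates. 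Once the Duhamel-style identities are written down correctly in that setting, the estimates proceed exactly as in the exact case, since the Lipschitz bound on the force-difference is the only analytic input used.
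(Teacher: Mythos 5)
Your proposal is correct and follows essentially the same route as the paper, whose proof of Lemma \ref{A} simply repeats the argument of Lemma \ref{0A} for the difference process, replacing the bound \eqref{A1.5} by the Lipschitz estimate $|\nabla U(x)-\nabla U(y)|\le L|x-y|$, exactly as you do. The only cosmetic point is that for the stated constant in \eqref{11} you should insert the refined bound $\max_{s\le t}|\Delta q_s|\le (1+L(t^2+th))\max(|x-y|,|(x-y)+t(u-v)|)$ obtained from \eqref{10}, rather than the cruder factor $2$, but that is immediate from what you have already proved.
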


 \begin{remark}
 	The lemma shows that on sufficiently short time intervals, the first variation of
 	velocity Verlet can be controlled by that of the corresponding motion with
 	constant velocity. In particular, contractivity for small times holds if $u-v=-\gamma (x-y)$
 	for some $\gamma>0$. \end{remark} 
 	
We will show next that in the region of strong convexity, the bounds in Lemma \ref{A}
 	can be improved if the initial velocities coincide. For such initial
 	conditions, \eqref{10} and \eqref{11} imply
 	\begin{eqnarray}
 	\label{13}
 	\norm{q_t(x,v)-q_t(y,v)-(x-y)} &\leq& L\,(t^2+ht)\,\norm{x-y},\\
 	\label{14} \norm{p_t(x,v)-p_t(y,v)} &\leq&
 	L\,t\,(1\,+\,L\,(t^2+ht))\,\norm{x-y}.
 	\end{eqnarray}
For $|x-y|\ge 2\mathcal R$, the bound in \eqref{13} can be improved considerably:
\begin{lemma}\label{C}
There exists a finite constant
$C\in(0,\infty)$, depending only on $L$ and $M$, such that the
bound
\begin{eqnarray}\label{16}
	\norm{q_t(x,v)-q_t(y,v)}^2 &\leq& (1-K t^2/2)\, \norm{x-y}^2
\end{eqnarray}
holds for any $t,h$ as above such that
\begin{eqnarray} \label{A4}
	L\,(t^2+th) &<& K/L,
\end{eqnarray}
and for any $x,y,v\in\R^d$ such that
\begin{eqnarray}\label{A5}
	\norm{x-y} \ \geq \ 2\mathcal R \qquad\text{and}\qquad (1+\norm{x}+\norm{v})\, h \ \leq
	K/C.
\end{eqnarray}
\end{lemma}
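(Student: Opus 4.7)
The plan is to set $z_s := q_s(x,v) - q_s(y,v)$, so that $z_0 = x-y$ and the common initial velocity gives $\dot z_0 = 0$ for the exact flow (and the analogous near-vanishing for velocity Verlet). Lemma~\ref{A} applied with $u = v$ yields $|z_s - z_0| \le L(t^2+th)|z_0|$ for all $s \in [0,t]$; combined with $|z_0| \ge 2\mathcal R$ and \eqref{A4}, which forces $L(t^2+th) < K/L \le 1$, this guarantees $|z_s| \ge \mathcal R$ for every $s \in [0,t]$, so that the strong convexity condition (A3) gives
$$
z_s \cdot \bigl(\nabla U(q_s(x,v)) - \nabla U(q_s(y,v))\bigr) \ \ge\ K\,|z_s|^2
\qquad \text{for every } s \in [0,t].
$$

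For the exact Hamiltonian flow ($h=0$), I would Taylor expand $|z_t|^2$ about $s=0$. Since $\dot z_0 = 0$ and $\ddot z_s = -(\nabla U(q_s(x,v)) - \nabla U(q_s(y,v)))$,
$$
|z_t|^2 \ =\ |z_0|^2 + 2\int_0^t (t-s)\bigl(|\dot z_s|^2 + z_s \cdot \ddot z_s\bigr)\,ds,
$$
and the convexity inequality above bounds the second term in the integrand from above by $-K|z_s|^2$, while Lemma~\ref{A} provides $|\dot z_s| \le Ls\bigl(1+L(t^2+th)\bigr)|z_0|$. Evaluating both integrals and using \eqref{A4} to replace $L^2 t^4$ by $Kt^2$ yields, with $\delta := L(t^2+th)$,
$$
|z_t|^2 \ \le\ |z_0|^2\left[\,1 - K(1-\delta)^2 t^2 + \tfrac{1}{6}(1+\delta)^2 L^2 t^4\,\right].
$$
For small $\delta$ the actual rate is close to $Kt^2$, so the slack between this and the claimed rate $Kt^2/2$ absorbs the error terms, giving $|z_t|^2 \le (1-Kt^2/2)|z_0|^2$.

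For velocity Verlet ($h > 0$), the plan is to repeat the analysis at the discrete level on the grid $\{ih\}_{i=0}^{n}$ with $n = t/h$. Setting $z^{(i)} := q_{ih}(x,v) - q_{ih}(y,v)$, $w^{(i)} := p_{ih}(x,v) - p_{ih}(y,v)$, and $g_i := \nabla U(q_{ih}(x,v)) - \nabla U(q_{ih}(y,v))$, the Verlet step reads $z^{(i+1)} = z^{(i)} + h\,w^{(i)} - (h^2/2)\,g_i$ and $w^{(i+1)} = w^{(i)} - (h/2)(g_i + g_{i+1})$, with $z^{(0)} = z_0$ and $w^{(0)} = 0$. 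Summation by parts then expresses $|z^{(n)}|^2 - |z_0|^2$ as a quadratic combination of the $g_i$ that closely mirrors the continuous Taylor expansion, together with discretization corrections coming from the piecewise-linear Verlet trajectory. These corrections involve $\nabla^3 U$ (controlled by $M$) and the a priori bounds from Lemma~\ref{0A} on $\max_{s\le t}|q_s|$ and $\max_{s\le t}|p_s|$ in terms of $|x|+|v|$; their magnitude relative to the leading convexity term is of order $(L+M)(1+|x|+|v|)h$, so the second condition in \eqref{A5}, $(1+|x|+|v|)h \le K/C$ with $C$ chosen suitably in terms of $L$ and $M$, makes them small enough to be absorbed into the slack from the exact-case bound.

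The main obstacle is the Verlet bookkeeping: verifying that the symplectic cancellations in the update render the leading discretization error quadratic in $h$ rather than linear, and that after summing over $n = t/h$ steps the resulting correction remains strictly dominated by the contractive term $-Kt^2/2\,|z_0|^2$, thereby fixing the explicit form of $C$ as a function of $L$ and $M$. A secondary check is that $|z_s| \ge \mathcal R$ holds uniformly on $[0,t]$ for the numerical trajectory too, which is again a direct consequence of Lemma~\ref{A}, valid in both the exact and the Verlet case.
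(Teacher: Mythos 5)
There is a genuine gap: your whole argument treats $\delta:=L(t^2+th)$ as a small parameter, but hypothesis \eqref{A4} only gives $\delta<K/L$, and $K/L$ can be as large as $1$ (e.g.\ a quadratic $U$ with $K=L$). This breaks two steps. First, Lemma \ref{A} with $u=v$ only yields $|z_s-z_0|\le \delta\,|z_0|<(K/L)|z_0|$, so from $|z_0|\ge 2\mathcal R$ you can conclude $|z_s|\ge\mathcal R$ only if $\delta\le 1/2$; for $K/L>1/2$ the claim ``this guarantees $|z_s|\ge\mathcal R$ for every $s\in[0,t]$'' does not follow. The paper has to work for this: it first proves the contraction on the (a priori unknown) interval where $|z_s|\ge\mathcal R$, and then runs a stopping-time contradiction at $t_1=\inf\{t:|z_t|<\mathcal R\}$, using that the contraction gives $z^\star_{t_1}\le|z_0|$ and that $w_0=0$ yields the sharper estimate $|z_{t_1}-z_0|\le\frac 12 L(t_1^2+ht_1)|z_0|<\frac 12|z_0|$ (note the factor $\frac12$ from integrating $|w_s|\le Ls\,|z_0|$), whence $|z_{t_1}|>\mathcal R$, a contradiction. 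Second, your quantitative bound $|z_t|^2\le|z_0|^2\bigl[1-K(1-\delta)^2t^2+\tfrac16(1+\delta)^2L^2t^4\bigr]$ implies $(1-Kt^2/2)|z_0|^2$ only when roughly $(1-\delta)^2-\tfrac16(1+\delta)^2\ge\tfrac12$, i.e.\ $\delta\lesssim 0.15$; your phrase ``for small $\delta$ the slack absorbs the error terms'' is exactly where the proof fails under \eqref{A4} alone. As written, your argument proves the lemma only under a strengthened hypothesis such as $L(t^2+th)\le cK/L$ with a small absolute constant $c$.

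The missing idea is to keep the convexity feedback exact rather than freezing $|z_s|^2$ at its crude lower bound: the paper writes $a(t)=|z_t|^2$, $b(t)=2z_t\cdot w_t$, derives $\dot a=b+\delta(t)$, $\dot b=-2Ka+\beta(t)$, and uses variation of constants with the kernel $\cos(\sqrt{2K}(t-r))$, $\sin(\sqrt{2K}(t-r))/\sqrt{2K}$ (see \eqref{c5}). All error terms are then bounded by multiples of $z_t^{\star,2}$, giving \eqref{c12}, $|z_t|^2\le(1-Kt^2)|z_0|^2+\tfrac12Kt^2 z_t^{\star,2}$, which is closed by a minimal-time bootstrap showing $z^\star_t\le|z_0|$; this works for all $\delta<K/L\le 1$, not just small $\delta$. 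Your discrete (summation-by-parts) treatment of Verlet is a legitimate alternative to the paper's continuous-time formulation \eqref{velVerlet} with error terms $\delta_i,\epsilon_i$, but as you acknowledge it is only sketched, and in any case it inherits the same structural problem unless you incorporate the exact $-2Ka$ feedback and the two bootstrap arguments above.
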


\begin{remark}
The lemma does not provide a bound if $\norm{v}$ is very large. However,
  in this case we still have the upper bound
  \begin{eqnarray}\label{17}
  	\norm{q_t(x,v)-q_t(y,v)} &\leq& (1+L(t^2+th))\, \norm{x-y}
  \end{eqnarray}
  that follows from \eqref{13}. Hence if $\norm{v}$ is large with small
  probability, then we still get a contraction on average.
For the exact Hamiltonian dynamics, there is no corresponding
  restriction on $\norm{x}$ and $\norm{v}$. Here, the lemma immediately yields a
  contraction result for synchronous coupling.
\end{remark}

In the case of the exact Hamiltonian flow, i.e.\ for $h=0$, we have
 \begin{eqnarray}\label{18}
 	H(\phi_t(x,v)) = H(x,v) \qquad \text{for any } t\in\R \text{ and }
 	x,v\in\R^d.
 \end{eqnarray}
We are now going to quantify the error in \eqref{18} in the case where the exact flow is
  replaced by the flow of the velocity Verlet integrator. This is crucial to
  quantify the acceptance-rejection probabilities. 
%
%
 
\begin{lemma}\label{D}
	There exist finite
	constants
	$C_1,C_2\in(0,\infty)$ that depend only on $L$, $M$ and $N$ such that the
	bounds
	\begin{eqnarray}
		\label{H1} \norm{(H\circ\phi_t-H)(x,v)} & \leq &  C_1 t  h^2
		\max(\norm{x},\norm{v})^3,
	\\
	\label{H2} \norm{\partial_{(z,w)} (H\circ\phi_t-H)(x,v) } 
		& \leq &   C_2 th^2 \max(\norm{x},\norm{v})^3
		\max(\norm{z},\norm{w}),
	\end{eqnarray}
	hold for any $x,v,z,w\in\R^d$ and $t,h$ as above satisfying \eqref{0}.
\end{lemma}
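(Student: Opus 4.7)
To prove Lemma \ref{D}, the plan is to derive a sharp per-step energy error for velocity Verlet, sum it over the $t/h$ steps to obtain \eqref{H1}, and then differentiate the same per-step expression for \eqref{H2}. First I would write one step from $(x_n,v_n)$ to $(x_{n+1},v_{n+1})$ in the usual form with half-step momentum $v_{n+1/2}=v_n-\tfrac{h}{2}\nabla U(x_n)$, position update $x_{n+1}=x_n+hv_{n+1/2}$, and final momentum $v_{n+1}=v_{n+1/2}-\tfrac{h}{2}\nabla U(x_{n+1})$. The telescoping identity $|v_{n+1}|^2-|v_n|^2=-(x_{n+1}-x_n)\cdot(\nabla U(x_n)+\nabla U(x_{n+1}))$ shows that the per-step energy change equals the (signed) trapezoidal-rule error
\[
E_n\ :=\ U(x_{n+1})-U(x_n)\,-\,\tfrac{1}{2}(x_{n+1}-x_n)\cdot(\nabla U(x_n)+\nabla U(x_{n+1})).
\]
A fourth-order Taylor expansion of $U$ about $x_n$, combined with $\|\nabla^3U\|\le M$ and $\|\nabla^4U\|\le N$, gives $E_n=-\tfrac{1}{12}\nabla^3U(x_n)[\Delta x_n,\Delta x_n,\Delta x_n]+O(N|\Delta x_n|^4)$ with $\Delta x_n=hv_{n+1/2}$, so $|E_n|\le C(M,N)h^3|v_{n+1/2}|^3$.

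Next, under the standing hypothesis $L(t^2+th)\le 1$, Lemma \ref{0A} (specifically \eqref{8}--\eqref{9}) provides the uniform a-priori bounds $\max_{s\le t}|q_s(x,v)|$ and $\max_{s\le t}|p_s(x,v)|$ by a constant multiple of $\max(\|x\|,\|v\|)$, and hence $|v_{n+1/2}|\le C\max(\|x\|,\|v\|)$ for every $n\le t/h$. Summing the per-step bound over the $t/h$ Verlet steps yields
\[
|(H\circ\phi_t-H)(x,v)|\ \le\ \sum_{n=0}^{t/h-1}|E_n|\ \le\ C_1\,t\,h^2\max(\|x\|,\|v\|)^3,
\]
which proves \eqref{H1}.

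For the derivative bound \eqref{H2}, I would differentiate the per-step expression, viewing $E_n=E_n(x,v)$ through the Verlet iteration, and apply the chain rule. Taylor expansion around $x_n$ shows that $|\partial_{x_n}E_n|\le C Nh^3|v_{n+1/2}|^3$ while $|\partial_{\Delta x_n}E_n|\le C Mh^2|v_{n+1/2}|^2$; combined with the explicit factor of $h$ hidden in $\Delta x_n=hv_{n+1/2}$, every contribution of a $(z,w)$-perturbation to $E_n$ carries a net factor of $h^3$. Meanwhile, Lemma \ref{A} (applied with $(x,u)=(x,v)$ and $(y,v)=(x+z,v+w)$) controls the variational sensitivities $\partial_{(z,w)}x_n$ and $\partial_{(z,w)}v_n$ uniformly by a constant multiple of $\max(\|z\|,\|w\|)$ for every $n\le t/h$. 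Multiplying and summing over the $t/h$ steps then yields the claimed bound \eqref{H2}. The main obstacle is precisely this combination of ingredients: one needs to uniformly propagate the first-variation bounds from Lemma \ref{A} across all Verlet steps, and simultaneously track the Taylor cancellation of the $\nabla^2U$ terms in $E_n$ and its derivatives --- without that cancellation one would only obtain $th$ instead of $th^2$ in the final bound.
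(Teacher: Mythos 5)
Your route --- discrete per-step bookkeeping of the Verlet energy error, summed over the $t/h$ steps, instead of the paper's continuous-time decomposition of $\frac{d}{dt}H_t$ along the interpolated flow \eqref{velVerlet} with midpoint cancellations --- is a legitimate alternative in principle, but your central identity is wrong as stated, and the per-step bound built on it fails. From $v_{n+1}=v_{n+1/2}-\tfrac h2\nabla U(x_{n+1})$ and $v_n=v_{n+1/2}+\tfrac h2\nabla U(x_n)$ one gets
\[
\tfrac12\left(|v_{n+1}|^2-|v_n|^2\right)\;=\;-\tfrac12 (x_{n+1}-x_n)\cdot\left(\nabla U(x_n)+\nabla U(x_{n+1})\right)\;+\;\tfrac{h^2}{8}\left(|\nabla U(x_{n+1})|^2-|\nabla U(x_n)|^2\right),
\]
so the per-step energy change is $E_n+\tfrac{h^2}{8}\left(|\nabla U(x_{n+1})|^2-|\nabla U(x_n)|^2\right)$, not the trapezoidal error $E_n$ alone. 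Consequently your claim that each step changes $H$ by at most $C(M,N)h^3|v_{n+1/2}|^3$ is false: for a quadratic potential $M=N=0$ and $E_n\equiv 0$, yet Verlet does not conserve $H$ (for $U(x)=\tfrac12 x^2$ in $d=1$ the one-step change is $\tfrac{h^3}{4}xv+O(h^4)(x^2+v^2)$). The omission is repairable because the extra term telescopes: over the whole trajectory it contributes $\tfrac{h^2}{8}\left(|\nabla U(q_t)|^2-|\nabla U(x)|^2\right)$, which, by the Lipschitz bound \eqref{A1.5} and the a priori estimates of Lemma \ref{0A}, is of order $L^2\,t\,h^2\max(|x|,|v|)^2$ --- the same order as the quadratic terms appearing in the paper's own computation --- and its $(z,w)$-derivative must likewise be added to your argument for \eqref{H2}. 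Until this boundary term is inserted, both halves of your proof rest on a false per-step statement.

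Two smaller points. First, the remainder bookkeeping $E_n=-\tfrac1{12}\nabla^3U(x_n)[\Delta x_n,\Delta x_n,\Delta x_n]+O(N|\Delta x_n|^4)$ does not give $|E_n|\le C(M,N)h^3|v_{n+1/2}|^3$, since $Nh^4|v_{n+1/2}|^4$ can only be absorbed into the cubic bound if $h|v_{n+1/2}|\lesssim 1$, which is not guaranteed; use instead the integral (mean-value) form of the trapezoid error, which gives $|E_n|\le\tfrac{M}{12}|\Delta x_n|^3$ with no quartic remainder and no $N$ in \eqref{H1} (consistent with the paper, where $N$ enters only through \eqref{H2}). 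Second, Lemma \ref{A} bounds differences of two trajectories, not the derivative flow; the uniform bounds on $\partial_{(z,w)}x_n$, $\partial_{(z,w)}v_n$ you invoke should either be obtained by the limiting argument you leave implicit or derived directly, as the paper does via \eqref{bxp}--\eqref{bvp}. With these repairs your per-step derivative estimates ($|\partial_{x_n}E_n|\lesssim Nh^3|v_{n+1/2}|^3$, $|\partial_{\Delta x_n}E_n|\lesssim Mh^2|v_{n+1/2}|^2$, plus the explicit factor $h$ in $\partial_{(z,w)}\Delta x_n$) do sum to a bound of the same form the paper obtains, so the discrete strategy is viable; it encodes in summed form exactly the cancellation the paper exploits by showing that the midpoint-symmetric pieces of $\frac{d}{dt}H_t$ integrate to zero over each step.
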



\subsection{Bounds for acceptance-rejection probabilities}

We now provide some crucial bounds for probabilities and expectations that involve acceptance-rejection events and the coupling. Recall that the coupling
that we consider for $|x-y|<2\mathcal R$ ensures that $\xi -\eta =-\gamma z$
with the maximal possible probability, where $z=x-y$. The following lemma enables us to control probabilities and expectations when $\xi -\eta \neq -\gamma z$.

\begin{lemma}\label{lem:X}
For any $p\ge 1$ there exist finite constants $C_p$ and $\tilde C_p$ such that for any choice of $\gamma$,
\begin{eqnarray}
\label{X1} P[\xi-\eta\neq -\gamma z] &\le & |\gamma z|/\sqrt{2\pi}\\
\label{X2} E[|e\cdot\xi|^p;\, \xi-\eta\neq -\gamma z] &\le & C_p|\gamma z|\, \max (|\gamma z|,1)^p,\\
\label{X3} E[|\xi|^{2p};\, \xi-\eta\neq -\gamma z] &\le & \tilde C_p|\gamma z|\, \left( (d-1)^{p}+\max (|\gamma z|,1)^{2p}\right).
\end{eqnarray}
\end{lemma}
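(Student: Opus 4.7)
The plan is to exploit the explicit construction of $\eta$ in \eqref{eta}. Since $\tilde{\mathcal U}$ is independent of $\xi$, the event $\{\xi-\eta\neq -\gamma z\}$ coincides up to null sets with the reflection branch being selected, and the derivation of the law of $\eta$ carried out immediately before Figure~\ref{fig:coupling} extends verbatim to the identity
\[
E[f(\xi);\,\xi-\eta\neq -\gamma z] \;=\; \int_{\mathbb R^d} f(x)\bigl(\varphi_{0,I_d}(x)-\varphi_{0,I_d}(x+\gamma z)\bigr)^+ dx
\]
for any bounded measurable $f$. Writing $\gamma z=\mu e$ with $\mu:=|\gamma z|$ and decomposing $x=se+w$, $w\perp e$, the integrand factorizes as $\varphi_{0,I_{d-1}}(w)\bigl(\varphi_{0,1}(s)-\varphi_{0,1}(s+\mu)\bigr)$, so expectations of functions of $e\cdot\xi$ reduce to explicit one-dimensional integrals. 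From this identity \eqref{X1} follows immediately by taking $f\equiv 1$: the $w$-integration yields $\int(\varphi_{0,1}(s)-\varphi_{0,1}(s+\mu))^+\,ds = 2\Phi(\mu/2)-1$, and the elementary estimate $\Phi(t)-\tfrac12\leq t/\sqrt{2\pi}$ gives the claim.

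For \eqref{X2}, I would observe that the integrand $(\varphi_{0,1}(s)-\varphi_{0,1}(s+\mu))^+$ is supported on $\{s>-\mu/2\}$, and the substitution $s\mapsto -s-\mu$ in the subtracted piece (using evenness of $\varphi_{0,1}$) recasts the target quantity as
\[
\int_{-\mu/2}^{\mu/2}|s|^p\varphi_{0,1}(s)\,ds \;+\; \int_{\mu/2}^{\infty}\bigl(s^p-|s-\mu|^p\bigr)\varphi_{0,1}(s)\,ds.
\]
The first summand is clearly of order $\mu^{p+1}$. The second I would split at $s=\mu$: on $[\mu/2,\mu]$ use the crude bound $s^p-(\mu-s)^p\leq s^p\leq\mu^p$ over an interval of length $\mu/2$; on $[\mu,\infty)$ apply the mean value theorem to get $s^p-(s-\mu)^p\leq p\mu s^{p-1}$ and integrate against the standard Gaussian. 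Each of the three pieces is dominated by a constant $C_p$ times $\mu\max(\mu,1)^p$.

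Finally, \eqref{X3} follows by orthogonally decomposing $\xi=(e\cdot\xi)e+\xi_\perp$ with $\xi_\perp:=\xi-(e\cdot\xi)e$. Under $P$, $\xi_\perp$ is a standard Gaussian on $e^\perp$ independent of $e\cdot\xi$, and since the coupling event $A:=\{\xi-\eta\neq -\gamma z\}$ depends only on $(e\cdot\xi,\tilde{\mathcal U})$, it is independent of $\xi_\perp$. The elementary inequality $|\xi|^{2p}\leq 2^{p-1}(|e\cdot\xi|^{2p}+|\xi_\perp|^{2p})$ then gives
\[
E[|\xi|^{2p};A]\;\leq\; 2^{p-1}\,E[|e\cdot\xi|^{2p};A]+2^{p-1}\,E[|\xi_\perp|^{2p}]\,P[A].
\]
The first term is handled by \eqref{X2} with exponent $2p$; the second by combining \eqref{X1} with the $\chi^2_{d-1}$ moment bound $E[|\xi_\perp|^{2p}]\leq C'_p(d-1)^p$. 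Summing and absorbing constants yields \eqref{X3}.

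The only nontrivial point is the case split in Step~2: applying the mean value theorem globally on $[\mu/2,\infty)$ fails because $p\mu s^{p-1}$ does not capture the contribution near $s=\mu/2$, whereas the crude bound $s^p-(\mu-s)^p\leq\mu^p$ alone wastes smallness as $\mu\to 0$. Splitting at $s=\mu$ lets each bound act on its natural domain and precisely produces the factor $\mu\max(\mu,1)^p$ in \eqref{X2} (and hence the correct dimension and scale dependence in \eqref{X3}).
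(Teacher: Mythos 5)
Your proof is correct and is essentially the paper's argument: the same reduction of the restricted expectation to the one-dimensional integral $\int |s|^p\bigl(\varphi_{0,1}(s)-\varphi_{0,1}(s+\mu)\bigr)^+ds$ with $\mu=|\gamma z|$, the same splitting into $\int_{-\mu/2}^{\mu/2}|s|^p\varphi_{0,1}\,ds$ plus $\int_{\mu/2}^{\infty}\bigl(s^p-|s-\mu|^p\bigr)\varphi_{0,1}(s)\,ds$, and the same orthogonal decomposition $\xi=(e\cdot\xi)e+\xi_\perp$ with independence of the event from $\xi_\perp$ for \eqref{X3}. One small correction to your closing remark: the mean value bound does \emph{not} fail on $[\mu/2,\infty)$ --- there one has $s\ge|s-\mu|$ and $s-|s-\mu|\le\mu$, so $s^p-|s-\mu|^p\le p\,\mu\, s^{p-1}$ holds on the whole half-line and integrates to at most $p\,m_{p-1}\mu$, which is exactly what the paper does; your extra split at $s=\mu$ is harmless but unnecessary.
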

The a priori bounds \eqref{H1} and \eqref{H2} for velocity Verlet can be used to obtain a rather precise control for the rejection probabilities in HMC, and for the probability that in a coupling for HMC, the proposal is accepted for one component and rejected for the other. The resulting bounds are crucial to prove contractivity on average for the coupling.

\begin{theorem}\label{AR}
There exist finite constants
$C_1,C_2,C_3\in(0,\infty)$ that depend only on $L,M$ and $N$ such that the
following bounds hold for any $x,y\in\R^d$ and $T\in[0,\infty)$, $h\in[0,1]$
such that $L(T^2+hT)\leq 1$:
\begin{eqnarray}
\label{AR1} P[A(x)^C | \xi]  \ &\leq &     C_1 T(1+T)
\max(\norm{x},\norm{\xi})^3 h^2,\\
\label{AR2} P[A(x)^C] \  &\leq &  C_1 T(1+T) (\norm{x}^3 + 2d^{3/2}) h^2,\\
\label{AR3} P[\hat{A}(y)^C | \eta] \   &\leq &     C_1 T(1+T)
\max(\norm{x},\norm{\eta})^3 h^2,\\
\label{AR4} P[\hat{A}(y)^C ]  \ &\leq & C_1 T(1+T) (\norm{x}^3 + 2d^{3/2})
h^2,
\\
\label{AR5}
\ P[A(x)\Delta A(y) | \xi] \  &\leq &   C_2 T(1+T)
\max(\norm{x},\norm{y},\norm{\xi})^3 \norm{x-y} h^2,\\
\label{AR6}
P[A(x)\Delta A(y)] \   &\leq &   C_2 T(1+T)
(\max(\norm{x},\norm{y})^3+2d^{3/2}) \norm{x-y} h^2,
\end{eqnarray}
\begin{eqnarray}
\label{AR7}\lefteqn{ P[A(x)\Delta \hat{A}(y) | \xi,\eta]}  \\
\nonumber   &\leq &   C_2 T(1+T)
\max(\norm{x-y},\norm{\xi -\eta }) h^2  
\cdot \max(\norm{x},\norm{y},\norm{\xi},\norm{\eta})^3
 \end{eqnarray}
 Furthermore, if $\gamma |x-y|\le 1$, then
 \begin{eqnarray}
\label{AR9} && E[\max (|x|,|y|,|\xi |,|\eta |);\, A(x)\Delta \hat{A}(y)] \\&& \ \leq \ C_3\max (1,\gamma)\, T(1+T) 
\left(\max(\norm{x},\norm{y})^4+d^{2}\right) \norm{x-y} h^2.\nonumber
 \end{eqnarray}
\end{theorem}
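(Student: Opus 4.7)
The strategy is to reduce each bound to an estimate on the energy error $\Delta H(x,v) := H(\phi_T(x,v)) - H(x,v)$ of the velocity Verlet integrator, which is controlled by Lemma~\ref{D}. For single-component acceptance probabilities, the identity
\[
P[A(x)^C \mid \xi] \ =\ \bigl(1 - e^{-\Delta H(x,\xi)}\bigr)^+
\]
together with $(1-e^{-t})^+ \le |t|$ immediately yields $P[A(x)^C \mid \xi] \le |\Delta H(x,\xi)|$, so that \eqref{AR1} and (by the same argument with $(y,\eta)$ in place of $(x,\xi)$) \eqref{AR3} follow directly from \eqref{H1}. The unconditional bounds \eqref{AR2} and \eqref{AR4} then follow by integrating against the $\mathcal N(0,I_d)$ law of $\xi$ (or $\eta$, which has the same marginal distribution), using $\max(|x|,|\xi|)^3 \le |x|^3 + |\xi|^3$ and the Gaussian moment bound $E[|\xi|^3] \le 2d^{3/2}$.

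For the symmetric-difference bounds \eqref{AR5}, \eqref{AR6} and \eqref{AR7}, I would exploit that the Metropolis function $F(t) := 1 \wedge e^{-t}$ is $1$-Lipschitz. This gives
\[
P[A(x) \Delta A(y) \mid \xi] \ =\ |F(\Delta H(x,\xi)) - F(\Delta H(y,\xi))| \ \le\ |\Delta H(x,\xi) - \Delta H(y,\xi)|,
\]
and analogously $P[A(x)\, \Delta\, \hat A(y) \mid \xi,\eta] \le |\Delta H(x,\xi) - \Delta H(y,\eta)|$. I would then write these differences as line integrals of the gradient of $\Delta H$ along the affine path connecting the two endpoints, apply the derivative bound \eqref{H2} uniformly along the path, and use the triangle inequality to control the intermediate norms by $\max(|x|,|y|,|\xi|,|\eta|)$. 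This proves \eqref{AR5} and \eqref{AR7}, and integrating the result against the $\mathcal N(0,I_d)$ law of $\xi$ gives \eqref{AR6}.

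The main obstacle is the weighted bound \eqref{AR9}, which I would attack by splitting the expectation along the dichotomy $\{\xi-\eta = -\gamma z\}$ versus its complement, where $z := x-y$. On the event $\{\xi-\eta=-\gamma z\}$ we have $|\xi-\eta| = \gamma|z| \le 1$, so $\max(|z|,|\xi-\eta|) \le \max(1,\gamma)|z|$ and $|\eta|\le |\xi|+1$. Plugging these into \eqref{AR7} and using $\max(|x|,|y|,|\xi|,|\eta|)^4 \le \max(|x|,|y|)^4 + |\xi|^4 + |\eta|^4$ together with $E[|\xi|^4] = d(d+2)$ yields the desired form with $\max(|x|,|y|)^4 + d^2$ on this event.

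On the complement $\{\xi-\eta\neq-\gamma z\}$ the reflection structure in \eqref{eta} preserves $|\eta|=|\xi|$, so $|\xi-\eta|\le 2|\xi|$, and the conditional bound \eqref{AR7} controls the integrand by a sum of terms of the form $\max(|x|,|y|)^j |\xi|^k$ with $j+k\le 5$. Taking expectation on this event then requires moments $E[|\xi|^k;\,\xi-\eta\neq-\gamma z]$, and Lemma~\ref{lem:X} supplies exactly these, each carrying the essential extra factor $\gamma|z|$. The delicate point is to keep the dimension dependence at $d^2$ rather than $d^{5/2}$: because $\gamma|z|\le 1$ forces $\max(\gamma|z|,1)=1$ in \eqref{X3}, only powers $d^p$ with $p\le 2$ appear in the estimates from Lemma~\ref{lem:X}, and odd-order residuals are handled by Cauchy--Schwarz against an even moment. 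Collecting both pieces and absorbing constants yields $E[\max(|x|,|y|,|\xi|,|\eta|);\, A(x)\,\Delta\,\hat A(y)] \le C_3 \max(1,\gamma)T(1+T)h^2(\max(|x|,|y|)^4+d^2)|z|$, as required.
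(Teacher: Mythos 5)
Your treatment of \eqref{AR1}--\eqref{AR7} and of the synchronous branch $\{\xi-\eta=-\gamma z\}$ of \eqref{AR9} coincides with the paper's proof: reduce to the energy error of velocity Verlet via Lemma~\ref{D}, use that $u\mapsto 1\wedge e^{-u}$ is $1$-Lipschitz and that the same uniform variable drives both acceptance events, write the difference of energy errors as a line integral and apply \eqref{H2}, then integrate against the Gaussian law. Those parts are fine.

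There is, however, a genuine gap in the reflection branch of \eqref{AR9}. On $\{\xi-\eta\neq-\gamma z\}$ you replace $|\xi-\eta|=2|e\cdot\xi|$ by the cruder bound $2|\xi|$, and then bound everything through full-norm moments $E[|\xi|^k;\,\xi-\eta\neq-\gamma z]$ from Lemma~\ref{lem:X}. The problematic monomial is the cross term $|\xi|\,\max(|x|,|y|)^4$ (the piece of $\max(|z|,|\xi-\eta|)\max(|x|,|y|,|\xi|)^4$ in which the $\max(|x|,|y|)^4$ factor survives). Since the event $\{\xi-\eta\neq-\gamma z\}$ depends only on $e\cdot\xi$ and the orthogonal component of $\xi$ is independent of it, one has $E[|\xi|;\,\xi-\eta\neq-\gamma z]\asymp \gamma|z|\,d^{1/2}$, and Cauchy--Schwarz against \eqref{X1} and \eqref{X3} gives the same order; this is sharp, not an artifact of the estimate. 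The resulting contribution $\gamma|z|\,d^{1/2}\max(|x|,|y|)^4$ is \emph{not} dominated by $C\,\gamma|z|\,(\max(|x|,|y|)^4+d^2)$: take $\max(|x|,|y|)^4\asymp d^2$ and the left side is of order $d^{5/2}\gamma|z|$ while the claimed bound is of order $d^2\gamma|z|$. So as written your argument only yields \eqref{AR9} with an extra factor $d^{1/2}$ on the $\max(|x|,|y|)^4$ term, which is exactly the $d^{5/2}$-versus-$d^2$ loss you wanted to avoid. The fix --- and the one point where the paper's argument differs from yours --- is to keep the one-dimensional structure of the reflection: retain $\max(|z|,2|e\cdot\xi|)$, so that the term multiplying $\max(|x|,|y|)^4$ is $E[|e\cdot\xi|;\,\xi-\eta\neq-\gamma z]\le C_1\gamma|z|$ by the dimension-free projected-moment bound \eqref{X2}, while the genuinely mixed term $E[|e\cdot\xi|\,|\xi|^4;\,\xi-\eta\neq-\gamma z]$ is handled by splitting $\xi$ into $e\cdot\xi$ and its orthogonal part (or by Cauchy--Schwarz combining \eqref{X2} and \eqref{X3}), which indeed produces only $\gamma|z|\,d^2$. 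With that modification your proof goes through and matches the paper's.
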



\section{Proofs of a priori bounds}\label{sec:proofs}

If $h>0$ then we define $\lb{t}=\lb{t}_h$ and $\ub{t}=\ub{t}_h$ by \eqref{eq:round}. For $h=0$ we set $\lb{t}=\ub{t}=t$. In both cases,
$(q_t,p_t)$ solves \eqref{velVerlet}.

\begin{proof}[Proof of Lemma \ref{0A}]
We fix $x,v\in\R^d$. Let $x_s=q_s(x,v)$ and $v_s=p_s(x,v)$.
By \eqref{velVerlet}, we have for any $s\in[0,t]$ that
\begin{eqnarray*}
		x_s &=& x  +  \int_0^s v_{\lb{r}} \, dr  -  \frac{h}{2} \int_0^s
		\nabla U(x_{\lb{r}}) \, dr \\
		&=& x +  s v  -  \frac{1}{2} \int_0^s \int_0^{\lb{r}} \left(\nabla
		U(x_{\lb{u}})  + \nabla
		U(x_{\ub{u}})\right) \, du  \, dr \,-\,  \frac{h}{2}  \int_0^s \nabla
		U(x_{\lb{r}}) \,dr.
	\end{eqnarray*}
	By \eqref{A1.5} and since $t\in h\Z$,
	\begin{eqnarray*}
		\norm{x_s-x-s v} &\leq& \frac{L}{2} \int_0^s \int_0^r
		\left(\norm{x_{\lb{u}}}  + \norm{x_{\ub{u}}}\right) \, du \, dr  + 
		\frac{hL}{2} \int_0^s \norm{x_{\lb{r}}} \, dr
		\\&\leq& \frac{L}{2} (t^2+th) \max_{s\leq t} \norm{x_s},\qquad\text{ and thus}\\
		\max_{s\leq t} \norm{x_s-x-s v}  &\leq & \frac{L}{2} (t^2+th)
		\left(\max_{s\leq t} \norm{x_s-x-sv}+\max(\norm{x},\norm{x+vt})\right).
	\end{eqnarray*}
By \eqref{0}, we
obtain:  
\begin{eqnarray}\nonumber
	\max_{s\leq t} \norm{x_s-x-s v}  &\leq & L(t^2+th)
	\max(\norm{x},\norm{x+v t}),
\\  \max_{s\leq t} \norm{x_s} &\leq & (1+L(t^2+th)) \max(\norm{x},\norm{x+v t})\nonumber\\ & \le & 2 \max(\norm{x},\norm{x+v t}).\label{eq:4a}
\end{eqnarray}
We now derive bounds for $v_s$. By \eqref{velVerlet} and \eqref{A1.5},
\begin{eqnarray*}
	v_s &=& v - \frac{1}{2} \int_0^s \left((\nabla U)(x_{\lb{r}}) \ + \ (\nabla
	U)(x_{\ub{r}})\right) \, dr,
	\\
	\norm{v_s-v} & \leq & \frac{L}{2} \int_0^s \left( \norm{x_{\lb{r}}} \ + \
	\norm{x_{\ub{r}}}\right) \ dr.
\end{eqnarray*}
Since $t\in h\Z$, we obtain by \eqref{eq:4a} and \eqref{0},
\begin{eqnarray*}
	\max_{s\leq t} \norm{v_s-v} &\leq & L t \max_{s\leq t} \norm{x_s}  \ \leq \  
	L t (1+L(t^2+th))\max(\norm{x},\norm{x+v t}),\\
	 \max_{s\leq t} \norm{v_s} &\leq& 
 \norm{v} \ + \ 2Lt \max(\norm{x},\norm{x+v t}).
\end{eqnarray*}
\end{proof}

\begin{proof}[Proof of Lemma \ref{A}]
	The proof can be carried out in a similar way to the proof of Lemma \ref{0A},
	where instead of  \eqref{A1.5}, we directly apply the Lipschitz bound
	$|\nabla U(x)-\nabla U(y)|\le L|x-y|$ for $x,y\in\mathbb R^d$.
\end{proof}

\begin{proof}[Proof of Lemma \ref{C}]
	Notice that we are in the case where the initial velocities
	coincide. We fix $x,y,v\in\R^d$ such that \eqref{A5} holds true and
	set $x_s=q_s(x,v)$, $v_s=p_s(x,v)$, $y_s=q_s(y,v)$, 
	$z_s=q_s(x,v)-q_s(y,v)$ and $w_s=p_s(x,v)-p_s(y,v)$.
	In particular, $z_0=x-y$ and $w_0=0$. Let 
	$$ z_t^\star\ =\ \max_{s\le t}|z_s|\qquad\text{and}\qquad w_t^\star\ =\ \max_{s\le t}|w_s| .$$
	By Lemma \ref{A}, we have
\begin{eqnarray}
\label{eq:0}|z_t-z_0| & \le & L(t^2+ht)|z_0|,\quad\text{\ and}\\
\label{eq:0a}w_t^\star \ \le \  Ltz_t^\star & \le & 2Lt\quad\text{for any }t\in h\mathbb Z_+\text{ s.t.\ }L(t^2+ht)\le 1.
\end{eqnarray}	
The following computations are valid for $t\in\R_+$ such that $\norm{z_s}\geq
	\mathcal R$ for $s\in[0,t]$. Recall that by \eqref{velVerlet}, 
	\begin{eqnarray}\label{c1}
		\dot{z}_t &=& w_{\lb{t}} - \frac{h}{2} \left(\nabla U(x_{\lb{t}}) -
		\nabla U(y_{\lb{t}})\right),\\ \label{c2}
		\dot{w}_t &=& - \frac{1}{2} \left(\nabla U(x_{\lb{t}}) - \nabla
		U(y_{\lb{t}}) + \nabla U(x_{\ub{t}}) - \nabla
		U(y_{\ub{t}})\right).
	\end{eqnarray} 
	Let $a(t):=\norm{z_t}^2$ and $b(t):=2z_t\cdot w_t$. Our goal is to derive an upper bound for $a(t)$. To this end we note that $a$ and $b$ satisfy
	the following differential equations:
	\begin{eqnarray*}
		\dot{a}(t) &=& b(t) + \delta(t),\\
		\dot{b}(t) &=& - z_t\cdot\left(\nabla U(x_{\lb{t}})-\nabla
		U(y_{\lb{t}})+\nabla U(x_{\ub{t}})-\nabla U(y_{\ub{t}})\right) 
		\\&& + 2w_{\lb{t}} \cdot w_t  - h w_t\cdot\left(\nabla
		U(x_{\lb{t}})-\nabla U(y_{\lb{t}})\right) \\
		&=& -2 z_t\cdot\left(\nabla U(x_{t})-\nabla U(y_t))\right) + 2\norm{w_t}^2 +
		\epsilon(t),
		\end{eqnarray*}
		where
		\begin{eqnarray*}
		\delta(t) &=& 2z_t\cdot(w_{\lb{t}}-w_t) - h z_t\cdot\left(\nabla
		U(x_{\lb{t}})-\nabla U(y_{\lb{t}})\right)\\
		&=& \delta_1(t)+\delta_2(t)+\delta_3(t) \quad \text{with}\\
		\delta_1(t) &=& 2(t-\lb{t}-h/2) z_{\lb{t}}\cdot\left(\nabla
		U(x_{\lb{t}})-\nabla
		U(y_{\lb{t}})\right),\\
		\delta_2(t) &=& 2(t-\lb{t}-h/2) (z_t-z_{\lb{t}})\cdot\left(\nabla
		U(x_{\lb{t}})-\nabla
		U(y_{\lb{t}})\right),\\
		\delta_3(t) &=& (t-\lb{t}) z_t \cdot \left(\nabla
		U(x_{\ub{t}})-\nabla
		U(y_{\ub{t}})-\nabla
		U(x_{\lb{t}})+\nabla
		U(y_{\lb{t}})\right),\\
		\epsilon(t) &=& \epsilon_1(t) + \epsilon_2(t) + \epsilon_3(t)
		\quad\text{with}\\
		\epsilon_1(t) &=& 2 z_t\cdot\left(\nabla U(x_t)-\nabla U(y_t)\right)\\ &&
		+z_t\cdot\left(-\nabla U(x_{\lb{t}})+\nabla U(y_{\lb{t}})-\nabla
		U(x_{\ub{t}})+\nabla U(y_{\ub{t}})\right),\\
		\epsilon_2(t)&=& 2w_t\cdot(w_{\lb{t}}-w_t),\\
		\epsilon_3(t) &=& -h w_t\cdot\left(\nabla U(x_{\lb{t}})-\nabla U(y_{\lb{t}})
		\right).
	\end{eqnarray*} 
	We see that
$$
				\dot{b}(t) \ =\  -2 K a(t) + \beta(t)$$
	with a function $\beta$ satisfying
	\begin{equation}
	\label{betat}\beta(t) \ \leq\ 2 \norm{w_t}^2 + \epsilon (t).
\end{equation}			
	 The initial value problem
	\begin{align*}
		\dot{a} \ &= \ b + \delta, & a(0)&=\norm{z_0}^2, \\
		\dot{b} \ &= \ -2K a + \beta, & b(0) &= 0, 
	\end{align*}
	has a unique solution that is given by
			\begin{eqnarray}\nonumber
		a(t) &=& \cos\left(\sqrt{2K}t\right) \norm{z_0}^2 + \int_0^t
		\cos\left(\sqrt{2K}(t-r)\right)\delta(r)\, dr \\
		 && 
		 + \int_0^t
		\frac{1}{\sqrt{2K}} \sin\left(\sqrt{2K} (t-r)\right) \beta(r)\,
		dr.\label{c5}
	\end{eqnarray}
	We now bound the terms $\delta$, $\epsilon$ and $\beta$.
Note first that the assumptions imply that $K t^2 \leq Lt^2 \leq 1 \leq
 	 \pi^2/2$. Hence $t\leq \pi/\sqrt{2K}$, and thus $\sin(\sqrt{2K}(t-r))\geq 0$ for any
 $r\in[0,t]$. 
Let $\bar{t}:=(\lb{t}+\ub{t})/2=\lb{t}+h/2$. Then for $f\in C^1$, 
\begin{eqnarray*}
	\norm{\int_{\lb{t}}^{\ub{t}} \left(r-\bar{t}\, \right) f(r) \, dr} &=&
	\norm{\int_{\lb{t}}^{\ub{t}} \left(r-\bar{t}\, \right)
	\left(f(r)-f(\bar{t})\right)\, dr}
	\\
	&\leq& \int_{\lb{t}}^{\ub{t}} \left(r-\bar{t}\,\right)^2 \, dr \, \sup\norm{f'}
	\ =\  \frac{h^3}{12} \sup \norm{f'}.
\end{eqnarray*}
Therefore, we obtain 
\begin{eqnarray*}
	\int_{\lb{t}}^{\ub{t}} \cos\left(\sqrt{2K}(t-r)\right) \delta_1(r) \, dr
	&\leq& \frac{h^3}{6}  \sqrt{2K}\,L  \norm{z_{\lb{t}}}^2.
	\end{eqnarray*}
	In particular, for $t\in
	h\Z_+$,
	\begin{eqnarray}\label{AB1}
	\int_0^t \cos\left(\sqrt{2K}(t-r)\right) \, \delta_1(r)\, dr &\leq& t h^2
	\frac{\sqrt{2K}L}{6} z_t^{\star ,2}
\end{eqnarray} 
where $z_t^{\star ,2}:=(z_t^\star )^2$.
Moreover, $\delta_2(t)$ is given by 
\begin{align*}
	2(t-\bar{t}) (t-\lb{t})
	(w_{\lb{t}}-\frac{h}{2}(\nabla U(x_{\lb{t}})-\nabla
	U(y_{\lb{t}}))\cdot(\nabla U(x_{\lb{t}})-\nabla
	U(y_\lb{t})),
\end{align*}
 and hence by \eqref{14}, for $t\in h\Z_+$, 
\begin{equation}
	\int_0^t \cos(\sqrt{2K}(t-r)) \, \delta_2(r)\, dr \leq  
\frac 12	h^2t(Lw_t^\star z_t^\star +\frac h2L^2z_t^{\star ,2})
	\leq   \frac{5}{4} t^2h^2  L^2 z_t^{\star,2}.\label{AB2}
\end{equation} 
In order to control $\delta_3$ in an efficient way note that
\begin{eqnarray}\nonumber
\lefteqn{\norm{\nabla U (x_{\ub{t}})-\nabla U (x_{\lb{t}})-\nabla U (y_{\ub{t}})
	+\nabla U(y_{\lb{t}})}}\\\nonumber
	&=&\norm{\int_{\lb{t}}^{\ub{t}} \left(\nabla^2 U(x_r)
	\dot{x}_r-\nabla^2 U(y_r) \dot{y_r}\right) \, dr}
	\\\label{c6}&\leq& M \int_{\lb{t}}^{\ub{t}} \norm{z_r} \norm{\dot{x}_r}
	\, dr + L \int_{\lb{t}}^{\ub{t}} \norm{\dot{z}_r} \, dr
	\\\nonumber&\leq& Mh z_{\ub{t}}^\star
	\left(\norm{v_{\lb{t}}}+\frac{hL}{2} \norm{x_{\lb{t}}}\right) + Lh
	\left(\norm{w_{\lb{t}}} + \frac{hL}{2} \norm{z_{\lb{t}}}\right).
\end{eqnarray}
Therefore, we obtain for $t\in h\Z_+$, by \eqref{14},
\begin{eqnarray}\nonumber
\lefteqn{\int_0^t \cos\left(\sqrt{2K} (t-r)\right) \delta_3(r) \, dr  \ \leq \  t
	\delta_3^\star(t)}
	\\&\leq& th^2\left(M z_t^{\star,2}\left(v_t^\star+\frac{hL}{2}x_t^\star\right)
	+ 2 L^2 t z_t^{\star,2} + \frac{hL^2}{2} z_t^{\star,2}\right). \label{AB3}
\end{eqnarray} 
Next, we derive bounds for $\beta(t)$. We first observe that by \eqref{betat} and \eqref{14},
$$\beta(t)\ \leq\ 2 L^2 t^2 z_t^{\star,2} +\epsilon^\star (t),$$ 
and hence
\begin{eqnarray}
\lefteqn{\int_0^t \frac{1}{\sqrt{2K}} \sin\left(\sqrt{2K}(t-r)\right)
	\beta(r) \, dr}\label{c7}
	\\&  \leq & 2L^2 \int_0^t (t-r) r^2 \, dr \, z_t^{\star,2} + \int_0^t
	(t-r) \, dr\, {\epsilon^\star (t)} 
	\ =\ \frac{1}{6}L^2 t^4 z_t^{\star,2} + \frac{1}{2}t^2
	\epsilon^\star(t).\nonumber
\end{eqnarray}
The terms $\epsilon_1$, $\epsilon_2$ and $\epsilon_3$ can be controlled
similarly to $\delta_1$, $\delta_2$ and $\delta_3$. Analogously to
\eqref{c6}, we obtain
\begin{eqnarray*}
 	\norm{\epsilon_1(t)} &\leq & z_t^\star \norm{\nabla U(x_{t})-\nabla
 	U(y_{t})-\nabla U(x_{\lb{t}}) + \nabla U(y_{\lb{t}}) } 
 	\\&&+ \ z_t^\star \norm{\nabla U(x_{\ub{t}})-\nabla U(y_{\ub{t}})-\nabla
 	U(x_{{t}}) + \nabla U(y_{{t}}) }  \\
 	&\leq& 2Mh z^{\star,2}_{\ub{t}} \left(v^\star_{\ub{t}}+\frac{hL}{2}
 	x_{\ub{t}}^\star\right) + 2Lh z_{\ub{t}}^\star \left(w^\star_{\ub{t}}+
 	\frac{hL}{2} z^\star_{\ub{t}}\right), 
\end{eqnarray*}
and thus by \eqref{14}, for $t\in h\Z_+$,
\begin{eqnarray*}
	\epsilon_1^\star(t) &\leq& h z_t^{\star,2} \left(2 M v_t^\star + 4 L^2t +
	 h L M x_t^\star + h L^2\right),\\
	\epsilon_2^\star(t) &\leq&  2Lh w_t^\star z_t^\star \ \leq \ 4 L^2
	h t z_t^{\star,2}, \\
	\epsilon_3^\star(t)  &\leq& L h w_t^\star z_t^\star \ \leq \ 2 L^2 h t z_t^{\star,2}.
\end{eqnarray*}
Thus in total, we obtain by \eqref{c7},
\begin{eqnarray*}
\lefteqn{ \int_0^t \frac{1}{\sqrt{2K}} \sin(\sqrt{2K}(t-r))\, \beta(r)\, dr}
	\\ &\leq& \left(\frac{1}{6}L^2 t^4 + \frac{1}{2}t^2
	h\left(2Mv_t^\star+10L^2t+hLMx_t^\star+hL^2\right) \right)
	z_t^{\star,2}.
\end{eqnarray*}
By \eqref{c5}, 
, \eqref{AB1}, \eqref{AB2} and \eqref{AB3}, we obtain
\begin{eqnarray}
\label{c7a}
\norm{z_t}^2 & =& a(t) \ \leq\ \cos\left(\sqrt{2K} t\right) \norm{z_0}^2 +
\left(\frac{1}{6}L^2 t^4 + A_t\right) z_t^{\star,2},\qquad\text{where}\\
\nonumber A_t&=& \left(t^2
	h+th^2\right)\left(Mv_t^\star+5L^2t+hLMx_t^\star+hL^2\right) +L^{3/2}t h^2 + 2 L^2  h^2 t^2. 
\end{eqnarray}
By Lemma \ref{0A},
there is a finite constant $C$ that depends only on $K,L$ and $M$ such that for
any
$t\in h\Z_+$ satisfying \eqref{A4}, we have 
\begin{eqnarray}\label{c10}
	A_t &\leq & Ct^2h(1+\norm{x}+\norm{v}). 
\end{eqnarray}
Noting that
$	\cos(\sqrt{2K} t) \leq 1-K t^2 +  K^2
	t^4/6,$
and $K^2 t^4 \leq  L^2 t^4 \leq K
	t^2$ by \eqref{A4}, we obtain by \eqref{c7a} and \eqref{A5},
\begin{eqnarray}
\label{c11}
\norm{z_t}^2 &\leq& (1-K t^2) \norm{z_0}^2 + \left(\frac{1}{3}K t^2 +
Cht^2(1+\norm{x}+\norm{v})\right) z_t^{\star,2}\\
\label{c12}  &\leq& (1-K t^2) \norm{z_0}^2 + \frac{1}{2}
	K t^2 z_t^{\star,2}
\end{eqnarray}
for any $t\in h\cdot \mathbb Z_+$ s.t.\ \eqref{A4} holds. This inequality then
implies
\begin{eqnarray}
	\label{c13} \norm{z_t}^2 &\leq& (1-\frac{1}{2} K t^2) \norm{z_0}^2
\end{eqnarray}
for $t$ as before. Indeed, suppose first that $h>0$. Then \eqref{c13} follows directly from \eqref{c12} if
$\norm{z_t}\leq \norm{z_0}$ holds for any $t>0$ satisfying \eqref{A4}. Now suppose for
a contradiction that there exists $t>0$ s.t.\ \eqref{A4} holds and
$\norm{z_t}>\norm{z_0}$. Since $z_t$ is linear on each partition interval, 
we may assume that $t\in h\Z_+$. Let $t_0$ denote the smallest $s\in
h\Z_+$ for which $\norm{z_s}>\norm{z_0}$. Then
$	z_{t_0}^\star = \norm{z_{t_0}}$, and hence by \eqref{c12},
	$\norm{z_{t_0}}\leq |z_0|$
in contradiction to the definition of $t_0$. Thus \eqref{c13} holds for all $t$ as
above.	
For $h=0$, we can argue similarly by the intermediate value theorem.\smallskip

Summarizing, we have shown that \eqref{c13} holds for $t\in h\Z_+$
satisfying \eqref{A4} provided $\norm{z_s}\geq \mathcal R$ for all $s\in[0,t]$ and $h$
satisfies \eqref{A5}. To conclude the proof
suppose that $\norm{z_0}\geq 2\mathcal R$. We claim that then $\norm{z_t}\geq \mathcal R$
holds for all $t$ satisfying \eqref{A4}. Indeed let $t_1:=\inf\{t:
\norm{z_t}<\mathcal R\}$. Then $\norm{z_s}\geq \mathcal R$ on $[0,t_1]$. Suppose for a
contradiction that $L(t_1^2 + t_1 h)<K/L\leq 1$. Then by
\eqref{c13}, $\norm{z_s}\leq \norm{z_0}$ for $s\in[0,t_1]$. Hence by \eqref{c1} and
\eqref{eq:0a},
\begin{eqnarray*}
\lefteqn{|z_{t_1}-z_0| \ = \  \norm{\int_0^{t_1} \left( w_{\lb{s}}-\frac{h}{2}(\nabla
	U(x_{\lb{s}})-\nabla U(y_{\lb{s}}))\right)\, ds}}\\ 
	&\leq& \frac{L t_1^2}{2} \norm{z_0} + \frac{L h t_1}{2} \norm{z_0} \ = \
	\frac{1}{2} L(t_1^2+h t_1) \norm{z_0} \ < \ \frac{1}{2} \norm{z_0},
\end{eqnarray*}
and thus $\norm{z_{t_1}}>\frac{1}{2}\norm{z_0} \geq  \mathcal R$ in contradiction to the definition of $t_1$. 
\end{proof} 

\begin{proof}[Proof of Lemma \ref{D}]
Fix $x,v,z,w\in\R^d$. We set $x_t=q_t(x,v)$, $v_t=p_t(x,v)$ and
$H_t = H(x_t, v_t) \ = \  \frac{1}{2} \norm{v_t}^2  \ + \ U (x_t)$.
Then
\begin{eqnarray}\nonumber
\lefteqn{\frac{d}{dt} H_t\   =\  v_t \cdot \frac{d}{dt} v_t \ + \ \nabla U(x_t)\cdot
\frac{d}{dt} x_t} \\
\nonumber &=&   - \frac{1}{2} v_t\cdot \left(\nabla U(x_{\lb{t}})+\nabla
 U(x_{\ub{t}})\right) + v_{\lb{t}} \cdot \nabla U(x_t)    -  \frac{h}{2}
 \nabla U(x_{\lb{t}})\cdot \nabla U(x_t), \\
\nonumber &=&  \rn{1}_t \ + \ \rn{2}_t \ + \ \rn{3}_t \ + \ \rn{4}_t,\qquad\text{where}
\end{eqnarray}
\begin{eqnarray}
\label{1t} \rn{1}_t&=& - \frac{1}{2} v_{\lb{t}}\cdot \left(\nabla U(x_{\lb{t}}) \ + \ \nabla
U(x_{\ub{t}}) \ - \ 2\nabla U(x_t)\right), \\
\label{2t} \rn{2}_t&=&  - \frac{1}{2} (v_{{t}}-v_{\lb{t}})\cdot \left(\nabla U(x_{\lb{t}}) \ +
 \ \nabla U(x_{\ub{t}}) \ - \ 2\nabla U(x_t)\right),\\
\label{3t}  \rn{3}_t&=&  (v_{\lb{t}}-v_{t})\cdot \nabla U(x_t) \ - \ \frac{h}{2}
\norm{\nabla U(x_t)}^2,
\\
\label{4t}  \rn{4}_t&=&   \frac{h}{2} \left(\nabla U(x_t) \ - \ \nabla
    U(x_{\lb{t}})\right)\cdot \nabla U(x_t). 
\end{eqnarray}
Furthermore,
\begin{eqnarray*}
\lefteqn{v_{\lb{t}}-v_t \ =\ \frac{t-\lb{t}}{2} \left(\nabla U(x_{\lb{t}}) \ +\ 
\nabla U(x_{\ub{t}})\right)}
\\ &=& (t-\lb{t}) \nabla U(x_t)  +  \frac{t-\lb{t}}{2} \left(\nabla
U(x_{\lb{t}})  +  \nabla U(x_{\ub{t}})  -  2\nabla U(x_t) \right),
\end{eqnarray*} 
\begin{eqnarray*}
\lefteqn{\nabla U(x_t)-\nabla U(x_{\lb{t}}) \ =\  \int_{\lb{t}}^t \nabla^2 U(x_s)
	\cdot \left(v_{\lb{t}} - \frac{h}{2} \nabla U(x_{\lb{t}})\right) \, ds} \\
	&=& (t-\lb{t}) \nabla^2 U(x_{\lb{t}}) \cdot v_{\lb{t}}  - 
	\int_{\lb{t}}^t (\nabla^2 U(x_s)  -  \nabla^2 U(x_{\lb{t}}))\cdot
	v_{\lb{t}} \,  ds\\
	&&  -  \frac{h}{2} \int_{\lb{t}}^t \nabla^2 U(x_s)\cdot \nabla
	U(x_{\lb{t}}) \, ds, \qquad\text{and hence,}
\end{eqnarray*}
\begin{eqnarray}\nonumber
 \lefteqn{ 2 \nabla U(x_t)  -  \nabla U(x_{\lb{t}})  -  \nabla U(x_{\ub{t}})}
	  \\ \nonumber &=&   (t -  \lb{t}  +  t  -  \ub{t}) \nabla^2 U(x_{\lb{t}})\cdot
	v_{\lb{t}}  +  \int_{\lb{t}}^t (\nabla^2 U(x_s)  -  \nabla^2 U(x_{\lb{t}})) \cdot
	v_{\lb{t}} \,  ds
	 \\  \nonumber&& -  \int_t^{\ub{t}} (\nabla^2 U(x_s) -  \nabla^2
	U(x_{\lb{t}})) \cdot  
	v_{\lb{t}} \, ds\,  -\, \frac{h}{2} \int_{\lb{t}}^{t} \nabla^2 U(x_s)\cdot \nabla
	U(x_{\lb{t}}) \, ds  \\  
	\label{eq:5ta} && +  \frac{h}{2} \ \int_t^{\ub{t}} \nabla^2
	U(x_s) \cdot \nabla U(x_{\lb{t}}) \, ds \\
	&=& 2(t-\bar{t}) \nabla^2 U(x_{\lb{t}})\cdot v_{\lb{t}} \ + \ \rn{5}_t,\nonumber
\end{eqnarray}
where $\bar{t}=(\lb{t}+\ub{t})/2$ and
\begin{eqnarray}
	\norm{\rn{5}_t}  &\leq& M \norm{v_{\lb{t}}} \int_{\lb{t}}^{\ub{t}}
	\norm{x_s-x_{\lb{t}}} \, ds + \frac{1}{2} L^2 h^2 
	\norm{x_{\lb{t}}}\label{eq:5tb}
	\\
\nonumber 	&=& \frac{1}{2} M h^2
	\norm{v_{\lb{t}}}
	\norm{v_{\lb{t}}-\frac{h}{2} \nabla U(x_{\lb{t}})} + \frac{1}{2}
	L^2 h^2
	\norm{x_{\lb{t}}}
\end{eqnarray}
Consequently, $\rn{1}_t = \rn{1}_t^a + \rn{1}_t^b$, where
\begin{eqnarray*}
	\rn{1}_t^a &=& (t-\bar{t}) v_{\lb{t}} \cdot \nabla^2 U(x_{\lb{t}}) v_{\lb{t}},
	\quad \rn{1}_t^b \ = \ \frac{1}{2} v_{\lb{t}}\cdot\rn{5}_t .
\end{eqnarray*}
In particular, for any $t\in h\mathbb Z_+$, $
	\int_{0}^{t} \rn{1}_s^a \, ds = 0$, and
$$	\rn{1}_t^{b,\star}\ =\ \sup_{s\le t}	\norm{\rn{1}_s^b}\ \leq\ \frac{h^2}{4} \left(M
	(v_t^\star)^3+\frac{hLM}{2}(v_t^\star)^2 x_t^\star + L^2 x_t^\star v_t^\star
	\right).
$$
Similarly, we obtain
\begin{eqnarray}\nonumber
	\norm{\rn{2}_t} &\leq& \frac{L}{2} \norm{v_t-v_{\lb{t}}}
	\left(\norm{x_{\lb{t}}-x_t}+\norm{x_{\ub{t}}-x_t}\right)\\
\nonumber	&=&\frac{L}{4}h^2 \norm{\left(\nabla U(x_{\lb{t}})+\nabla
	U(x_{\ub{t}})\right)}\cdot \norm{v_{\lb{t}}-\frac{h}{2}\nabla U(x_{\lb{t}})}\quad\text{for }t\ge 0,\\
\nonumber	\rn{2}_t^\star &\leq&  \frac{L^2}{2}h^2 \left(x_t^\star v_t^\star +
	\frac{Lh}{2} (x_t^\star)^2\right)	\quad\text{for }t\in h\mathbb Z_+,\\
\nonumber\rn{3}_t &=& \rn{3}_t^a + \rn{3}_t^b + \rn{3}_t^c,\qquad\text{ where}\\
\rn{3}_t^a 
\label{eq:3ta}&=& -(t-\lb{t}-\frac{h}{2}) \norm{\nabla U(x_{\lb{t}})}^2 ,
\\ \label{eq:3tb}\rn{3}_t^b &=&  (v_t - v_{\lb{t}}-\frac{h}{2}\nabla
U(x_t))\cdot \left(\nabla U(x_t)-\nabla U(x_{\lb{t}})\right) ,\\
\label{eq:3tc}\rn{3}_t^c &=& \frac{h}{2} \left(\nabla U(x_t) - \nabla U(x_{\lb{t}})\right)\cdot\nabla
U(x_t)\ =\ \rn{4}_t .
\end{eqnarray}
In particular, for any $t\in h\Z_+$, $\int_0^t \rn{3}_s^a \, ds=0$, and
\begin{eqnarray*}
	\rn{3}_t^{b,\star} &\leq& \frac{3}{2}h^2  L^2 \left(v_t^\star x_t^\star+
	\frac{hL}{2} (x_t^\star)^2 \right), \\
	\rn{3}_t^{c,\star} &=& \rn{4}_t^\star\ \leq \ \frac 12{h^2L^2} \left(v_t^\star x_t^\star  +
	\frac{hL}{2} (x_t^\star)^2\right) .
\end{eqnarray*}
By combining the bounds, we obtain for $t\in h\Z_+$:
\begin{eqnarray*}
	\norm{H_t-H_0} &=& \norm{\int_0^t
	(\rn{1}_s^a+\rn{1}_s^b+\rn{2}_s+\rn{3}_s^a+\rn{3}_s^b+\rn{3}_s^c+\rn{4}_s)\,
	ds}
\\&\leq& t h^2 \left(\frac{M}{4} 
	(v_t^\star)^3+\frac{hL}{8}(v_t^\star)^2 x_t^\star 
	+ 3L^2 v_t^\star x_t^\star + \frac{3}{2} hL^3 (x_t^\star)^2\right)
\end{eqnarray*}
This implies the first claim \eqref{H1}, since for $t\in h\mathbb Z_+$ satisfying 
\eqref{0}, both $x_t^\star$ and $v_t^\star$ are bounded by a constant multiple 
of $\max (|x|,|v|)$.\smallskip

Next, we consider the derivative flow
$$
	x_t' \ =\ (\partial_{(z,w)} q_t)(x,v), \qquad 
	v_t' \ =\ (\partial_{(z,w)} p_t)(x,v),$$
where the derivatives are taken w.r.t.\ the
	initial condition. We have
\begin{eqnarray}
\label{eq:xtp}
	\frac{d}{dt} x_t' &=& \partial_{(z,w)} \dot x_t \ = \ v_{\lb{t}}' -
	\frac{h}{2} \nabla^2 U(x_{\lb{t}}) x_{\lb{t}}', \\
\label{eq:vtp}	\frac{d}{dt} v_t' &=& \partial_{(z,w)} \dot v_t \ = \
	-\frac{1}{2}\left(\nabla^2 U(x_{\lb{t}})x_{\lb{t}}' + \nabla^2
	U(x_{\ub{t}})x_{\ub{t}}'\right)
	\end{eqnarray}
with initial condition $(x_0',v_0') = (z,w)$. In particular, 
 for $s,t\in\R_+$ s.t.\ $s\in[\lb{t},\ub{t}]$, 
	\begin{eqnarray*}
		\norm{x_t'-x_s'} &=& \norm{t-s}\cdot\norm{v_{\lb{t}}'-\frac{h}{2}\nabla^2
		U(x_{\lb{t}})x'_{\lb{t}}} \ \leq \ h \left(v_t'^\star + \frac{hL}{2}
		x_t'^\star\right),\\
		\norm{v_t'-v_s'} &=& \frac{1}{2} \norm{t-s} \norm{\nabla^2
		U(x_{\lb{t}})x'_{\lb{t}}+\nabla^2
		U(x_{\ub{t}})x'_{\ub{t}} } \ \leq \ h Lx'^{\star}_{\ub{t}}.
	\end{eqnarray*}
We now first derive a priori bounds for $x_t'^\star$ and $v_t'^\star$. By 
\eqref{eq:xtp} and \eqref{eq:vtp},
\begin{eqnarray*}
\lefteqn{|x_t'-z - w t|}\\		&=&\left|  \frac{1}{2} \int_0^t \int_0^{\lb{s}}
		\left(\nabla^2U(x_{\lb{r}})x'_{\lb{r}}+\nabla^2
		U(x_{\ub{r}})x'_{\ub{r}}\right)\,dr\,ds\,  -\, \frac{h}{2} \int_0^t \nabla^2 U(x_{\lb{s}})x'_{\lb{s}}\, ds\right|\\
&\leq& \frac{L}{2} \int_0^t \int_0^{\lb{s}}
\left(\norm{x'_{\lb{r}}} + \norm{x'_{\ub{r}}} \right) \, dr \, ds + \frac{hL}{2}
\int_0^t \norm{x'_{\lb{s}}}\, ds
\end{eqnarray*}
For $t\in h\mathbb Z_+$, we obtain
\begin{eqnarray*}
	\max_{s\leq t} \norm{x_s' - z - ws} &\leq&  \frac{L}{2}(t^2+ht)\left(\max_{s\leq t} \norm{z+ws} +
	\max_{s\leq t} \norm{x_s'-z-ws}\right)
\end{eqnarray*}
Hence if $L(t^2+ht)\leq 1$ then
\begin{equation}\label{bxp}
	\max_{s\leq t} \norm{x_s'-z-ws} \ \leq\  L(t^2+ht) \max(\norm{z},\norm{z+wt}). 
\end{equation}
Similarly, by \eqref{eq:vtp} and \eqref{bxp},
\begin{equation}\label{bvp}
\max_{s\leq t} \norm{v_s'-w} \ \leq\  Lt \max_{s\leq t} \norm{x_s'} 
	\ \leq \  2Lt \max(\norm{z}, \norm{z+wt}).
\end{equation}
Now we can derive bounds for $H_t'$. We have
\begin{equation}\label{dHp}
\frac{d}{dt} H_t' = \left(\frac{d}{dt} H_t\right)' = \rn{1}_t' + \rn{2}_t' +
\rn{3}_t' + \rn{4}_t'.
\end{equation}
Similarly as above, we bound the terms $\rn{1}_t'$, $\rn{2}_t'$,
$\rn{3}_t'$ and $\rn{4}_t'$ individually. By \eqref{1t}, $\rn{1}_t' = \rn{6}_t - \frac{1}{2} v_{\lb{t}}\rn{7}_t$, where
\begin{eqnarray*}
\rn{6}_t &=&- \frac{1}{2} v'_{\lb{t}}\left(\nabla U(x_{\lb{t}})+\nabla
	U(x_{\ub{t}}) - 2\nabla U(x_t)\right),\\
	\rn{7}_t &=& \nabla^2 U(x_{\lb{t}}) x'_{\lb{t}}+\nabla^2
	U(x_{\ub{t}}) x'_{\ub{t}} - 2\nabla^2 U(x_t) x_t'.
	\end{eqnarray*}
Similarly to the decomposition of $\rn{1}_t$ above, we have $\rn{6}_t =	\rn{6}_t^a+\rn{6}_t^b$ where $\rn{6}_t^a=(t-\bar{t})v_{\lb{t}}' \cdot \nabla^2 U(x_{\lb{t}}) v_{\lb{t}}$
and $\rn{6}_t^b = \frac{1}{2} v_{\lb{t}}' \cdot \rn{5}_t$
In particular,
for $t\in h\mathbb Z_+$,
$$
	\int_{\lb{t}}^{\ub{t}} \rn{6}^a_s \, ds \ =\  0, \qquad\text{and}\qquad
	\rn{6}_t^{b,\star} \ \leq\  \frac{M h^2}{4} \left(v_t'^{\star} v_t^\star
	+ \frac{hL}{2} v_t'^\star x_t^\star\right).
$$
Furthermore,
$\rn{7}_t = \rn{7}_t^a + \rn{7}_t^b + \rn{7}_t^c$ with
\begin{eqnarray*}
	\rn{7}_t^a &=& \nabla^2 U(x_t) (x'_{\lb{t}} + x'_{\ub{t}} -2 x'_t) \\ & =& 2 (\bar t-t)  \nabla^2 U(x_{\lb{t}})
	\left(v_{\lb{t}}'-\frac{h}{2}\nabla^2 U(x_{\lb{t}}) x'_{\lb{t}}\right),\\
	\rn{7}_t^b &=& \left(\nabla^2 U(x_{\lb{t}}) + \nabla^2 U(x_{\ub{t}}) -
	2\nabla^2 U(x_t)\right) x'_{\lb{t}},\\
	\rn{7}_t^c &=& \left(\nabla^2 U(x_{\ub{t}})  -
	\nabla^2 U(x_t)\right)\left(x'_{\ub{t}}- x'_{\lb{t}}\right) .
\end{eqnarray*}
For $t\in h\Z_+$, $\int_0^t \rn{7}_s^a \, ds = 0$. Moreover, 
similarly to \eqref{eq:5ta} and \eqref{eq:5tb}, we have
$$
2 \nabla^2 U(x_t) - \nabla^2 U(x_{\lb{t}}) - \nabla^2 U(x_{\ub{t}})\
	= 2(t-\bar{t}) \nabla^3 U(x_{\lb{t}}) \cdot v_{\lb{t}}+ \rn{8}_t,
$$
where $\norm{\rn{8}_t}\leq \frac{1}{2} N h^2 \norm{v_{\lb{t}}}
\norm{v_{\lb{t}}-\frac{h}{2}\nabla U(x_{\lb{t}})}+\frac 12LM h^2\norm{x_{\lb{t}}}$.
Therefore, we can decompose $\rn{7}_t^b=\rn{7}_t^d +\rn{7}_t^e$ 
where $\int_0^t \rn{7}_s^d \, ds = 0$ for $t\in h\Z_+$, and
$$
	\rn{7}_t^{e,\star} \ \leq\ \frac{h^2}{4}\left(N v_t^{\star ,3}+\frac{hLN}{2} v_t^{\star ,2} x_t^\star+
	LM v_t^\star  x_t^\star\right) {x_t'^\star}.
$$
Furthermore, by \eqref{eq:xtp}, we have
$$	
	\rn{7}_t^{c,\star} \ \leq\ M h^2 \left(v_t^\star+\frac{hL}{2} x_t^\star\right)
	\cdot\left(v_t'^\star+\frac{hM}{2} x_t'^\star \right) .
$$
For the second term we have
$\rn{2}_t' = \rn{9}_t + \rn{10}_t+ \rn{11}_t$ where
\begin{eqnarray*}
	{\rn{9}_t} &=& {-\frac{1}{2}(v'_t-v'_{\lb{t}})(\nabla
	U(x_\lb{t})+\nabla U(x_{\ub{t}})-2\nabla U(x_t))}, \\
	{\rn{10}_t} &=& -{\frac{1}{2} (v_t-v_{\lb{t}}) \left(\nabla^2
	U(x_{\lb{t}})+\nabla^2 U(x_{\lb{t}})-2\nabla^2 U(x_t)\right)x'_t }, \\
	{\rn{11}_t} &=& -{\frac{1}{2}(v_t-v_{\lb{t}})\left(\nabla^2
	U(x_{\lb{t}})(x'_{\lb{t}}-x_t')+\nabla^2 U(x_{\ub{t}})(x_{\ub{t}}'-x_t')\right)}. 
\end{eqnarray*}		
For $t\in h\Z_+$, we obtain by \eqref{eq:xtp} and \eqref{eq:vtp},
\begin{eqnarray*}
	\rn{9}_t^\star &\leq& {L M h^2} x_t'^\star
	(v_t^\star+{hL}x_t^\star /2)/2,\\
	\rn{10}_t^{\star} &\leq& { LM h^2} x_t^\star (v_t^\star +
	{hL} x_t^\star /2) x_t'^\star /2 ,\\
	\rn{11}_t^{\star} &\leq&{L^2 h^2} x_t^\star (v_t'^\star +
	{hL} x_t'^\star /2)/2 .
\end{eqnarray*}
Furthermore, $\rn{3}_t' = (\rn{3}_t^{a})' + (\rn{3}_t^{b})' + (\rn{3}_t^{c})'$. By
\eqref{eq:3ta} and the chain rule,\\ $\int_0^t (\rn{3}_t^a)' \, ds = 0$ for $ t\in h\Z_+$. Moreover, by \eqref{eq:3tb} and the chain rule,
\begin{eqnarray*}
	\norm{(\rn{3}_t^b)'} 
	&\leq & \left(\norm{v_t'-v'_{\lb{t}}}+\frac{hL}{2} \norm{x_t'}\right) L
	\norm{x_t-x_{\lb{t}}} \\ 
	&& + \ (\norm{v_t-v_{\lb{t}}}+\frac{hL}{2}
	x_{\lb{t}})\left(L\norm{x_t'-x_{\lb{t}}'}+M\norm{x_t-x_{\lb{t}}}
	\norm{x_t'}\right) \\
	&\leq& \frac{3}{2} L^2h^2 \norm{x_{\lb{t}}'} \norm{v_{\lb{t}}+\frac{hL}{2}
	x_{\lb{t}}} \\
	&& + \ \frac{3}{2} L^2 h^2 \norm{x_t}
	\norm{v'_{\lb{t}}+\frac{hL}{2}x'_{\lb{t}}} + \frac{3}{2} L M h^2 \norm{x_t}
	\norm{v_{\lb{t}}+\frac{hL}{2} x_{\lb{t}}} \norm{x_t'},\\
	(\rn{3}_t^b)'^{,\star} &\leq& \frac{3}{2} L^2 h^2 \left(x_t'^{\star}(v_t^\star +
	\frac{hL}{2}x_t^\star) + x_t^\star (v_t'^\star + \frac{hL}{2} x_t'^\star)\right)\\
	&&\ + \ \frac{3}{2} L M h^2 x_t^\star (v_t^\star + \frac{hL}{2}
	x_t^\star) x_t'^\star
\end{eqnarray*}
Finally, a similar computation as for $(\rn{3}_t^b)'$ shows that
\begin{eqnarray*}
(\rn{3}_t^c)'^{,\star}\ =\ \rn{4}_t'^{,\star} &\le&
\frac 12 L^2h^2\left(  x_t'^\star (v_t^\star + \frac{hL}{2}
	x_t^\star)+ x_t^\star (v_t'^\star + \frac{hL}{2} x_t'^\star)\right)\\
	&&\ +\, \frac 12LMh^2x_t^\star (v_t^\star + \frac{hL}{2}
	x_t^\star) x_t'^\star .
\end{eqnarray*}
Collecting all the bounds derived above, we eventually obtain
\begin{eqnarray*}
	\norm{H_t'-H_0'} &\leq&  th^2 \big(\frac{L_3}{4} (v_t^\star)^3
	x_t'^{,\star} + \frac{L_3 L}{8} h (v_t^\star)^2 x_t^\star x_t'^\star
	\\
	&&\qquad+ Q_1(v_t^\star,x_t^\star) v_t'^{,\star} + x_t
	Q_2(v_t^\star,x_t^\star) x_t'^{,\star}\big)
\end{eqnarray*}
for $t\in h\Z$, where $Q_1$ and $Q_2$ are explicit quadratic forms.
We can conclude that
\begin{eqnarray*}
	\norm{H_t'-H_0'} &\leq& C_2 t (1+t) h^2 \max(\norm{x_0},\norm{v_0})^3
	\max(\norm{x_0'},\norm{v_0'}).
\end{eqnarray*}
\end{proof}

\begin{proof}[Proof of Lemma \ref{lem:X}]
Let $p=0$ or $p\ge 1$. Then by definition of $\eta$,
\begin{eqnarray*}
\lefteqn{E\left[ |e\cdot\xi |^p;\, \xi -\eta \neq -\gamma z\right] }\\
&\le & E\left[ |e\cdot\xi |^p;\, \tilde{U}>{\varphi_{0,1}(e\cdot\xi + \gamma
		\norm{z})}/{\varphi_{0,1}(e\cdot\xi)}\right]\\
&=& \int_{-\infty}^\infty |t|^p\left( \varphi_{0,1}(t)-\varphi_{0,1}(t+|\gamma z|)\right)^+\, dt\\
&=& \int_{-|\gamma z|/2}^\infty |t|^p\left( \varphi_{0,1}(t)-\varphi_{0,1}(t+|\gamma z|)\right)^+\, dt\\
&& = \int_{-|\gamma {z}|/2}^\infty \norm{t}^p \phi_{0,1}(t)\, dt -
	\int_{\gamma \norm{z}/2}^\infty \norm{t- \norm{\gamma z}}^p \varphi_{0,1}(t)\, dt\\
	&& = \int_{-\norm{\gamma z}/2}^{\norm{\gamma z}/2} \norm{t}^p \varphi_{0,1}(t)\,
	dt + \int_{\norm{\gamma z}/2}^\infty \left(\norm{t}^p-\norm{t-
	\norm{\gamma z}}^{p}\right) \varphi_{0,1}(t)\, dt .
\end{eqnarray*}	
For $p=0$, we directly obtain \eqref{X1}, and for $p\ge 1$,
\begin{eqnarray*}
{E\left[ |e\cdot\xi |^p;\, \xi -\eta \neq -\gamma z\right] }
&\le &\frac{1}{\sqrt{2 \pi}} 2^{-p} \norm{\gamma z}^{p+1} + \int_{
	\norm{\gamma z}/2}^\infty  p \norm{t}^{p-1}\norm{ \gamma z} \varphi_{0,1}(t) \, dt\\
	&\leq & C_p \norm{\gamma z} \max(\norm{\gamma z},1)^p
	\end{eqnarray*}
	where $C_p=\max (2^{-p}/\sqrt{2\pi },2pm_{p-1})$ with $m_p$ denoting the $p$ th moment of the standard normal distribution. Finally, since $\xi\sim \mathcal N(0,I_d)$ and the event $\{ \xi -\eta \neq -\gamma z\}$ is measurable w.r.t.\
	$\sigma (e\cdot\xi )$, we obtain 
	\begin{eqnarray*}
	\lefteqn{E\left[ |\xi|^{2p};\, \xi -\eta \neq-\gamma z\right]}\\
	&\le & 2^{p-1}E\left[ |e\cdot \xi|^{2p};\, \xi -\eta \neq-\gamma z\right] +2^{p-1} E\left[ |\xi-(e\cdot\xi) e|^{2p}\right]\,P[ \xi -\eta \neq-\gamma z]\\
	&\le & 2^{p-1}C_{2p}|\gamma z|\, \max (1,|\gamma z|)^{2p}\,+\, 
	2^{p-1}(d-1)^{p}m_{2p} |\gamma z|/\sqrt{2\pi}.
	\end{eqnarray*}
\end{proof}

\begin{proof}[Proof of Theorem \ref{AR}]
 Recall that
 \begin{eqnarray}
 	A(x) &=& \left\{ U  \leq  \exp\left(-H(\phi_T(x,\xi)) + H(x,\xi)
 	\right)\right\},\qquad\mbox{and}\\
 	\hat{A}(x) &=& \left\{ U \leq  \exp\left(-H(\phi_T(y,\eta)) + H(y,\eta)
 	\right)\right\}.
 \end{eqnarray}
Therefore, and since $U$ is independent of $\xi$, we obtain by
Lemma \ref{D},
\begin{eqnarray*}
	P[A(x)^C | \xi] &=& \norm{1- \exp\left(-(H(\phi_T(x,\xi)) -  H(x,\xi))^+
 	\right)} \\
 	&\leq& (H(\phi_T(x,\xi)) -  H(x,\xi))^+ \\
 &\leq& C_1 T(1+T)h^2 \max(\norm{x},\norm{\xi})^3,\qquad\mbox{and hence}\\
	P[A(x)^C] &\leq& C_1 T (1+T) h^2 E[\max(\norm{x}^3,\norm{\xi}^3)] \\
	&\leq& C_1 T(1+T) h^2 (|x|^3 + d^{3/2} \sqrt{8/\pi}).
\end{eqnarray*}
Since $\hat A(y)$ is defined similarly to $A(x)$ with $x,\xi$ replaced by $y,\eta$ and $\eta\sim\xi$, 
we obtain corresponding bounds for  $P[\hat{A}(y)^C|\eta ]$ and
$
P[\hat{A}(y)^C]$.\medskip

Next, we derive the corresponding bounds for the probabilities that the proposed move is accepted for one of the two components and rejected for the other.
By independence of $U$ from $\xi$ and $\eta$ and by Lemma \ref{D}, we have
\begin{align*}
	& P[A(x)\Delta \hat{A}(y)\, |\, \xi,\eta] \\
	& =\ \norm{\exp\left(-(H(\phi_T(x,\xi)) -  H(x,\xi))^+
 	\right)-\exp\left(-(H(\phi_T(y,\eta)) -  H(y,\eta))^+
 	\right)}\\
 	&\leq\ \norm{[H(\phi_T(x,\xi)) -  H(x,\xi)] - [H(\phi_T(y,\eta)) - 
 	H(y,\eta)]}\\
 	&\leq\ \int_0^1\left|\partial_{(x-y,\xi -\eta )}(H\circ\phi_T)(x_u,\xi_u)-\partial_{(x-y,\xi -\eta )}H(x_u,\xi_u)\right|\, du\\
 	&\leq\ C_2 T(1+T) h^2 \int_0^1 \max(\norm{x_u},\norm{\xi_u})^3 \, du \,
 	\max(\norm{x-y},\norm{\xi-\eta}) \\
 	&\leq\ C_2 T(1+T) h^2 \max(\norm{x-y},\norm{\xi -\eta})\,
 	\max(\norm{x},\norm{y},\norm{\xi},\norm{\eta})^3,
\end{align*}
where $x_u=u x + (1-u) y$, $\xi_u = u\xi + (1-u) \eta$, $z=x-y$ and
$W=\xi-\eta$. This proves \eqref{AR7}, and \eqref{AR5} can be shown similarly 
with $\eta$ replaced by $\xi$.\medskip

Next, we bound the unconditioned probabilities of acceptance rejection events.
At first we observe that by \eqref{AR5} and since $\xi\sim\mathcal N(0,I_d)$,
\begin{eqnarray*}
	P[A(x)\Delta A(y)] &\leq& C_2 T(1+T) h^2 \norm{x-y}
	E[\max(\norm{x},\norm{y},\norm{\xi})^3]
	\\ &\leq& C_2 T(1+T)h^2 \norm{x-y} \left(\max(|x|,|y|)^3 + d^{3/2}\sqrt{8/\pi}\right), 
\end{eqnarray*}
which implies \eqref{AR6}.
The proof of a corresponding bound for 
the expectation in \eqref{AR9} is slightly more complicated. We first note that by \eqref{AR7},
\begin{eqnarray}\nonumber
\lefteqn{ E[\max (|x|,|y|,|\xi |,|\eta |);\, (A(x)\Delta \hat{A}(y))\cap \{W=-\gamma z\}] }\\
\label{AR11}	& \leq &C_2 T(1+T) h^2 \max(1,\gamma) \norm{z}
	E[\max(\norm{x},\norm{y},\norm{\xi},\norm{\xi+\gamma z})^4]
	\\ & \leq &C_2T(1+T)h^2\max(1,\gamma)\norm{z} \left(\max (|x|,|y|)^4 +
	E[(|\xi |+\norm{\gamma z})^4]\right).\nonumber
\end{eqnarray}
%
%
Secondly, on $\{ W \not= - \gamma z\}$, we
	have $	\eta=\xi - 2(e\cdot \xi ) e$ where $e=z/\norm{z}$. In particular, $\eta =\xi$.
Therefore, by \eqref{AR7},
	\begin{eqnarray}\nonumber
	\lefteqn{ E[\max (|x|,|y|,|\xi |,|\eta |);\, (A(x)\Delta \hat{A}(y))\cap \{W\neq -\gamma z\}] }\\
\label{AR12}
		& \leq & C_2 T(1+T)h^2\,  E\left[\max(\norm{z},2\norm{e\cdot
		\xi})\max(\norm{x},\norm{y},\norm{\xi})^4;
		W\neq -\gamma z \right]	\\
	& \leq & C_2 T(1+T)h^2\,  E\left[ \left(\norm{z}+2\norm{e\cdot
		\xi}\right)\left(\max(\norm{x},\norm{y})^4+\norm{\xi}^4\right);
		W\neq -\gamma z \right] .\nonumber
		\end{eqnarray}
By \eqref{AR11}, \eqref{AR12}, and by the bounds in Lemma \ref{lem:X}, we can conclude that there is a finite constant $C_3$ depending only on $L$, $M$ and $N$ such that  for $|\gamma z|\le 1$, 
\begin{eqnarray}\nonumber
\lefteqn{ E[\max (|x|,|y|,|\xi |,|\eta |);\, A(x)\Delta \hat{A}(y)] }\\
	& \leq &C_3 T(1+T) h^2 \max(1,\gamma) \norm{z}
	\,  \left(\max (|x|,|y|)^4 +
	d^2\right).\nonumber
\end{eqnarray}
This proves the last assertion of the theorem.
\end{proof}

\section{Proofs of main results}\label{sec:proofsmain}

\begin{proof}[Proof of Theorem \ref{SCA}]
We fix $x,y\in\mathbb R^d$ such that $|x-y|\ge 2\mathcal R$ and $\max (|x|,|y|)\le R_2$. Since synchronous coupling is applied for $|x-y|\ge 2\mathcal R$, we have $\eta =\xi$. Hence by \eqref{10} and Lemma \ref{C} with $h=0$, we obtain
$$R'(x,y)\ =\ |q_T(x,\xi )-q_T(y,\xi )|\ \le\ (1-KT^2/2)\, r(x,y)$$
provided $LT^2\le K/L$.
\end{proof}

\begin{proof}[Proof of Theorem \ref{SCB}]
We directly give the more involved proof for the case of adjusted numerical HMC. For
unadjusted numerical HMC, the proof simplifies considerably, see the comments at the end 
of the proof.

Without loss of generality, we may assume that $R_2$ is chosen sufficiently large such that
 \begin{eqnarray}\label{XX}
	P[\norm{\xi}>R_2] &\leq& \frac{K}{70L}.
 \end{eqnarray}
 We fix $x,y\in\mathbb R^d$ such that $|x-y|\ge 2\mathcal R$ and $\max (|x|,|y|)\le R_2$. Since synchronous coupling is applied for $|x-y|\ge 2\mathcal R$, we have $\eta =\xi$, and hence
 \begin{align*}
 	R' (x,y)\ &= \  \norm{q_T(x,\xi)-q_T(y,\xi)} &\quad &\text{on } A(x)\cap A(y),
 \\	R'(x,y) \ &= \  r(x,y) &\quad &\text{on } A(x)^C\cap A(y)^C.
 \end{align*} 
 Moreover, on $A(x)\cap A(y)^C$ we have $Y'=y$, and thus
 \begin{eqnarray*}
 		R'(x,y)-r(x,y) &=& \norm{q_T(x,\xi)-y}-\norm{x-y} \ \leq \ \norm{q_t(x,\xi)-x}.
 \end{eqnarray*}
 Similarly, on $A(x)^C\cap A(y),$
 \begin{eqnarray*}
 	R'-r &\leq& \norm{q_T(y,\xi)-y}.
 \end{eqnarray*}
 Therefore, we obtain
 \begin{eqnarray}\nonumber
	E[R'(x,y)-r(x,y)] &\leq& E[\norm{q_T(x,\xi)-q_T(y,\xi)}-\norm{x-y}; A(x)\cap A(y)]
	\\&&+E[\norm{q_T(x,\xi)-x}; A(x)\cap A(y)^C]
	\\&&+E[\norm{q_T(y,\xi)-y}; A(x)^C\cap A(y)]\nonumber
	\\&=:& \rn{1} \ + \ \rn{2} \ + \rn{3}.\nonumber
 \end{eqnarray}
 
In order to control the first term, we choose a constant $C\in (0,\infty )$ as in Lemma \ref{C}, and we assume $h\le\min ( h_1,h_2)$ where $h_2:=\frac{K}{C(1+2R_2)}$.
 Then by Lemma \ref{C},
 \begin{eqnarray*}
 	\norm{q_T(x,\xi)-q_T(y,\xi)} &\leq& (1-\frac{1}{4}KT^2) \norm{x-y}
 	\quad\text{if } \norm{\xi}\leq R_2.
 \end{eqnarray*}
 Therefore, by \eqref{17}, and since $K\le L$,
 \begin{eqnarray*}
 	\rn{1} &\leq& - \frac{1}{4} K T^2 r(x,y) P[A(x)\cap A(y)\cap\{\norm{\xi}\leq R_2\}]
 	+ (LT^2+LTh)r(x,y) P[\norm{\xi}>R_2]\\
 	&\leq& - \frac{1}{4} K T^2 r(x,y) P[A(x)\cap A(y)] \ + \ (\frac{5}{4} L T^2 +
 	L T h) r (x,y)P[\norm{\xi}> R_2] \\
 	&\leq& - \frac{1}{4} KT^2 r(x,y) P[A(x)\cap A(y)] \ + \ \frac 94 LT^2r(x,y) P[\norm{\xi}>R_2]
 \end{eqnarray*}
 for $T\in h\cdot\n$. For $h\le h_1$  we have 
 \begin{equation}
 \label{Lbound}L(T^2+Th)\ \leq\ K/L\ \le \ 1.
 \end{equation}
Therefore, by Theorem \ref{AR}, for $\norm{x},\norm{y}\leq R_2$,
 \begin{eqnarray}
 	P[A(x)^C] + P[A(y)^C] &\leq & 2 C_1 T (1+T) (R_2^3 + 2 d^{3/2}) h^2.
 \end{eqnarray}
 We choose $h_3>0$ such that for $h\le h_3$, the expression on the r.h.s.\ is smaller than
 $1/5$. Because of \eqref{LTT},
 this can be achieved with $h_3^{-2}$ of order $O(R_2^3 + d^{3/2})$.
For $h\le h_3$, we obtain
 \begin{eqnarray*}
 	P[A(x)\cap A(y)] &\geq& 1-\frac{1}{5} \ = \ \frac{4}{5}.
 \end{eqnarray*}
 Therefore, and by \eqref{XX},
 \begin{eqnarray*}
 	\rn{1} &\leq& -\frac{1}{5} K T^2 r + \frac{9}{4} L T^2 r P[\norm{\xi}>R_2] \leq
 	-\frac{1}{6} KT^2r.
 \end{eqnarray*}
\rn{1}n order to control $\rn{2}$, we note that by Lemma \ref{0A}, 
\begin{eqnarray*}
	\norm{q_T(x,\xi)-x} &\leq& T \norm{\xi} \ + \ \max(\norm{x},\norm{x+T\xi}) \
	\leq \ \norm{x} + 2 T \norm{\xi}
\end{eqnarray*}
provided $L(T^2+Th)\leq 1$. Hence in this case we obtain
\begin{eqnarray*}
	\rn{2} &\leq& E[\norm{x}+2T \norm{\xi}; A(x)\cap A(y)^C].
\end{eqnarray*}
A corresponding bound with $x$ and $y$ interchanged holds for $\rn{3}$. Hence by 
the bound \eqref{AR5} for the conditional AR probability given $\xi$, 
\begin{eqnarray*}
 \rn{2} + \rn{3} &\leq& E[\max(\norm{x},\norm{y}) +  2 T \norm{\xi}; A(x)\Delta
 A(y)] \\
 &\leq& C_2 T (1+T) h^2 \norm{x-y} (1+2T) E[\max(\norm{x},\norm{y},\norm{\xi})^4]
 \\
 &\leq& 2 C_2 T(1+T)^2 h^2 (R_2^4 + 3 d^2) r.
\end{eqnarray*}
We choose a strictly positive constant $h_4$ such that for $h\le h_4$, the right hand side is smaller
than $\frac{1}{24} KT^2r$. By  \eqref{LTT}, this can be achieved with $h_4^{-1}$ of order
$O((R_2^2+d)K^{-1/2} T^{-1/2})$. Let $h_0=\min(h_2,h_3,h_4)$. Then for
$h\le \min (h_0,h_1)$, we obtain
$$
	\rn{1} + \rn{2} + \rn{3}\ \leq\  - \frac{1}{5} K T^2 r + \frac{1}{24} K T^2 r \
\leq\  - \frac{1}{8} K T^2 r. 
$$
This completes the proof for adjusted numerical HMC.\smallskip

For unadjusted numerical HMC, the argument simplifies since the rejection events $A(x)^C$ and $A(y)^C$ are empty. Thus it suffices to bound $\rn{1}$, which can be done similarly as 
above for $h\le\min (h_0,h_1)$ where $h_0:=h_2$.
\end{proof}

Next, we directly prove our main result for numerical HMC. Afterwards, we will give
the corresponding proof of Theorem \ref{thm:contrmainexact} for exact HMC, which essentially is an 
easy special case of the more difficult proof for numerical HMC.

\begin{proof}[Proof of Theorem \ref{thm:maincontraction}]
As above, we directly prove the result for adjusted numerical HMC, and we comment on the
simplifications in the unadjusted case in the end.
The parameters $\gamma$, $a$ and $R_1$ have been chosen in \eqref{Cgamma}, \eqref{Ca} and \eqref{CR1} such that the following conditions are satisfied:
\begin{eqnarray}
\label{eq:A}\gamma T &\le &1,\\
\label{eq:B} L(T+h) &\le &\gamma /4,\\
\label{eq:C}\gamma \mathcal R &\le &1/4,\\
\label{eq:D} a T &\ge &1,\\
\label{eq:E}R_1&\ge &\frac 52 \, (1+\gamma T)\mathcal R,\\
\label{eq:F}\exp (a(R_1-2\mathcal R)) &\ge &20.
\end{eqnarray}
Indeed, \eqref{eq:A} and \eqref{eq:C} hold by \eqref{Cgamma}, \eqref{eq:B} holds by \eqref{Cgamma} and \eqref{A0}, 
\eqref{eq:D} holds by \eqref{Ca}, \eqref{eq:E} holds by \eqref{CR1} and \eqref{Cgamma}, and \eqref{eq:F} holds by \eqref{CR1} and \eqref{Ca}. The bounds \eqref{eq:A}-\eqref{eq:F} will be essential in the following arguments.
We have chosen $\gamma$ and $a$ as large resp.\ small as possible such that
\eqref{eq:A}, \eqref{eq:C} and \eqref{eq:D} hold. Then \eqref{eq:B} implies the
additional constraints on $T$ in \eqref{A0}, and $R_1$ is chosen such that
\eqref{eq:E} and \eqref{eq:F} are satisfied. \medskip

To prove contractivity, we fix $x,y\in\mathbb R^d$ such that $\max (|x|,|y|)\le R_2$.
Since $x$ and $y$ are fixed, we briefly write $r$ and $R'$ instead of $r(x,y)$ and
$R'(x,y)$. We consider separately the cases where $|x-y|\ge 2\mathcal R$ and
$|x-y|<2\mathcal R$.\medskip

{\em (i) Contractivity for $|x-y|\ge 2\mathcal R$.} \ For $|x-y|\ge 2\mathcal R$, we can apply the result of Theorem \ref{SCB}. Indeed, choose $h_0$ as in Theorem
 \ref{SCB}. Then for $h\le h_0$, by concavity of $f$ and by \eqref{SC2},
 \begin{equation}
 E[f(R')-f(r)]\, \le\, f'(r)\, E[R'-r]\,
 \label{P1}\le \,-\frac 14 KT^2rf'(r)\, \le\, -c_1f(r)
 \end{equation}
where the lower bound $c_1$ for the contraction rate is given by
\begin{equation}
\label{P2}c_1\ =\ \frac 14 KT^2 \inf_{r>0}\frac{rf'(r)}{f(r)}.
\end{equation}
Recall that $f$ is concave with $f(0)=0$,  and $f$ is linear for $r\ge R_1$. Hence the function $r \mapsto rf'(r)/f(r)$ attains its minimum at $R_1$, where
$f'(R_1)=e^{-aR_1}$ and $f(R_1)=\int_0^{R_1}e^{-as}ds\le\min (R_1,a^{-1})$. Therefore, by \eqref{Ca} and \eqref{CR1},
\begin{eqnarray}
\nonumber c_1 &\ge &\frac 14KT^2\max (1,aR_1)\, e^{-aR_1}\ \ge \ \frac 14KT^2\, \frac 52 (1+\frac{\mathcal R}{T})\, e^{-5/2}e^{-\frac{5\mathcal R}{2T}}\\
\label{P2a}
&>&\ \frac{1}{20}KT^2\,  (1+\frac{\mathcal R}{T})\, e^{-\frac{5\mathcal R}{2T}}.
\end{eqnarray}

{\em (ii) Contractivity for $|x-y|< 2\mathcal R$.} \ For $|x-y|< 2\mathcal R$,
we apply the coupling defined by \eqref{**} and \eqref{hatA}. Let $z=x-y$ and $W=\xi -\eta$. Since $R'=r$ on $A(x)^C\cap\hat A(y)^C$, we have
$$E[f(R')-f(r)]\ =\ \rn{1}+\rn{2}+\rn{3}+\rn{4},\qquad\mbox{where}$$
\begin{eqnarray*}
\rn{1} &=& E\left[ f(R')-f(r);\, A(x)\cap\hat A(y)\cap\{ W=-\gamma z\} \right] ,\\
\rn{2} &=& E\left[ f(R'\wedge R_1)-f(r);\, A(x)\cap\hat A(y)\cap\{ W\neq -\gamma z\} \right] ,\\
\rn{3} &=& E\left[ f(R')-f(R'\wedge R_1);\, A(x)\cap\hat A(y)\cap\{ W\neq -\gamma z\} \right] ,\\
\rn{4} &=& E\left[ f(R')-f(r);\, A(x)\triangle\hat A(y) \right] .
\end{eqnarray*}
Only the first term is responsible for contractivity. The other terms are perturbations that have to be controlled. We will now derive upper bounds 
for each of the four terms. We remark at first that on $A(x)\cap\hat A(y)$,
\begin{equation}
\label{P3}R'\ =\ |q_T(x,\xi )-q_T(y,\eta )|\ \le\ |z+WT|+\max(|z|,|z+WT|)\, LT(T+h)
\end{equation}
by Lemma \ref{A}.\medskip

$\rn{1}$. On $A(x)\cap\hat A(y)\cap\{ W=-\gamma z\}$, we obtain by \eqref{P3},
\eqref{eq:A} and \eqref{eq:B},
\begin{eqnarray*}
R' &\le & |(1-\gamma T)z|\, +\, \max(|z|,|(1-\gamma T)z|)\, LT(T+h)\\
&\le & (1-\gamma T+\frac 14\gamma T)|z|\ =\ (1-\frac 34\gamma T)r.
\end{eqnarray*}
Therefore, by concavity of $f$,
\begin{eqnarray*}
\rn{1} &\le &f'(r)\,  E\left[ R'-r;\, A(x)\cap\hat A(y)\cap\{ W=-\gamma z\} \right] \\
&\le &-\frac 34\gamma Trf'(r)\,\left( 1- P[W\neq -\gamma z]-P[A(x)^C]-P[\hat A(y)^C]\right).
\end{eqnarray*}
By Lemma \ref{lem:X} and by \eqref{eq:C},
$$P[W\neq -\gamma z]\ \le\ \frac{\gamma r}{\sqrt{2\pi}}\ \le\ \frac{1}{4\sqrt{2\pi}}\
<\ \frac{1}{10}.$$
Furthermore, by Theorem \ref{AR}, there is a finite constant $C_1$ depending only on $L$, $M$ and $N$ such that
$$P[A(x)^C]+P[\hat A(y)^C]\ \le\ C_1T(1+T)\, (|x|^3+|y|^3+4d^{3/2})\, h^2.$$
Since $\max (|x|,|y|)\le R_2$ and \eqref{A0} holds, we can conclude that there is
a constant $h_5>0$ depending only on $L$, $M$, $N$, $d$ and $R_2$ such that
for $h\le h_5$,
\begin{equation}
\label{P4}\rn{1}\ \le\ -\frac{27}{40}\gamma Trf'(r).
\end{equation}
Furthermore, for fixed $L$, $M$ and $N$, the constant $h_5$ can be chosen 
by \eqref{A0} such that $h_5^{-2}$ is of order $O(R_2^{3}+d^{3/2})$.\medskip

$\rn{2}$. By definition of $f$, we have for $s\le R_1$,
$$f(s)-f(r)\ =\ \int_r^se^{-at}\, dt\ \le\ \frac 1a e^{-ar}\ =\ \frac 1a f'(r).$$
Therefore, by  \eqref{eq:D} and by Lemma \ref{lem:X}, the second term can be bounded by
\begin{equation}
\label{P5} \rn{2}\ \le\ \frac 1af'(r)\, P[W\neq -\gamma z]\ \le \ \frac{\gamma T}{\sqrt{2\pi}}rf'(r)\ <\ \frac 25\gamma Trf'(r).
\end{equation}

$\rn{3}$. If $W\neq -\gamma z$ then by definition of the coupling,
$$W\ =\ \xi -\eta\ =\ 2 (e\cdot\xi )e\qquad\mbox{where }e=z/|z|,$$
and hence $|z+WT|= |r+2e\cdot\xi T|$. Therefore on $A(x)\cap\hat A(y)\cap\{ W\neq -\gamma z\}$,
$$R'\ \le\ (1+LT (T+h))\, |r+2e\cdot\xi T|\ \le\ \frac 54 |r+2e\cdot\xi T|$$
by \eqref{P3}, \eqref{eq:B} and \eqref{eq:A}. Thus
\begin{eqnarray}
\nonumber\lefteqn{E\left[ (R'- R_1)^+;\, A(x)\cap\hat A(y)\cap\{ W\neq -\gamma z\} \right]}\\
\nonumber &\le &E\left[ (\frac 54 |r+2e\cdot\xi T|- R_1)^+;\,  W\neq -\gamma z \right]\\
\nonumber &= &\int_{-\infty}^\infty (\frac 54 |r+2u T|- R_1)^+\,  (\varphi_{0,1}(u)-\varphi_{0,1}(u+\gamma r))^+\, du\\
\nonumber &= &\int_{-\gamma r/2}^\infty (\frac 54 |r+2u T|- R_1)^+\,  (\varphi_{0,1}(u)-\varphi_{0,1}(u+\gamma r))\, du\\
\nonumber &= &\int_{\gamma r/2}^\infty \left\{ (\frac 54 |r+2u T|- R_1)^+ - (\frac 54 |r+2(u-\gamma r) T|- R_1)^+\right\}\,  \varphi_{0,1}(u)\, du
\\
\nonumber &\le  &\frac 52 \gamma rT\int_{\gamma r/2}^\infty  \varphi_{0,1}(u)\, du
\ \le\ \frac 54\gamma rT.
\end{eqnarray}
Here we have used that by \eqref{eta},
$$P[W\neq -\gamma z\,|\,\xi ]\ =\ \left( \varphi_{0,1}(e\cdot\xi )-\varphi_{0,1}(e\cdot\xi +\gamma r)\right)^+/\varphi_{0,1}(e\cdot\xi ).$$
Moreover, we have used that by \eqref{eq:E}, $R_1\ge\frac 54(1+\gamma T)r$.
By concavity of $f$ and by \eqref{eq:F}, we obtain
\begin{eqnarray}
\nonumber \rn{3} &\le & f'(R_1)\, E\left[ (R'- R_1)^+;\, A(x)\cap\hat A(y)\cap\{ W\neq -\gamma z\} \right]\\
\label{P7} &\le &\frac 54\gamma Trf'(R_1)\ \le\ \frac 54 e^{-a(R_1-2\mathcal R)}
\gamma Trf'(r)\ \le\ \frac{1}{16}\gamma Trf'(r).
\end{eqnarray}

\rn{4}. By a similar argument as in the proof of Theorem \ref{SCB}, we obtain
\begin{eqnarray*}
\lefteqn{E\left[ R'-r;\, A(x)\triangle\hat A(y) \right]}\\
&\le & E\left[|q_T(x,\xi )-x|;\, A(x)\cap\hat A(y)^C \right]\, +\,E\left[ |q_T(y,\eta )-y|;\, A(x)^C\cap\hat A(y) \right]\\
&\le & E\left[|x|+2T|\xi |;\, A(x)\cap\hat A(y)^C \right]\, +\,E\left[ |y|+2T|\eta |;\, A(x)^C\cap\hat A(y) \right]\\
&\le & (1+2T)\, E\left[\max (|x|,|y|,|\xi |,|\eta |);\, A(x)\triangle\hat A(y) \right]\\
&\le &2C_3\max (1,\gamma ) T(1+T)^2 
(R_2^4+ d^{2})  h^2r.
\end{eqnarray*} 
Here we have used \eqref{AR9} in the last step. By concavity of $f$ we obtain
\begin{eqnarray}
\nonumber \rn{4} &\le & f'(r)\, E\left[ R'-r;\, A(x)\triangle\hat A(y) \right] \\
\label{P8} &\le & 2C_3\max (1,\gamma ) T(1+T)^2 
(R_2^4+d^{2})  h^2rf'(r)\\
\nonumber &\le &\frac{1}{80}\gamma Trf'(r)
\end{eqnarray}
for $h\le h_6$ where $h_6$ is a positive constant depending only on $L$, $M$, $N$, $\mathcal R$, $R_2$ and $d$ that by \eqref{A0} can be chosen 
such that $h_6^{-2}$ is of order $O((1+\mathcal R)(R_2^{4}+d^{2}))$.\medskip

Combining the bounds for the terms $\rn 1$, $\rn 2$, $\rn 3$ and $\rn 4$ in
\eqref{P4}, \eqref{P5}, \eqref{P7} and \eqref{P8}, we obtain for $h\le \min (h_5,h_6)$,
\begin{eqnarray}
\nonumber E[f(R')-f(r)] &\le & \left( -\frac{27}{40}+\frac{2}{5}+\frac{1}{16}+\frac{1}{80}\right)\, \gamma Trf'(r)\\
&=&-\frac 15\,\gamma Trf'(r)\ \le\ -c_2\, f(r)\label{P9}
\end{eqnarray}
where the contraction rate $c_2$ satisfies
\begin{eqnarray}
\nonumber c_2&=& \frac 15 \gamma T\, \inf_{r\le 2\mathcal R}\frac{rf'(r)}{f(r)}\
\ge\ \frac 15\gamma T\, \max (1,2a\mathcal R)\, e^{-2a\mathcal R}\\
\label{P9a}&=& \frac 15\, \min\left( 1,\frac{T}{4\mathcal R}\right)\, \max \left( 1,\frac{2\mathcal R}{T}\right)\, e^{-2\mathcal R/T}\ \ge\ \frac{1}{10}\, e^{-2\mathcal R/T}.
\end{eqnarray}

{\em (iii) Global contraction.} \ Let $h_\star :=\min (h_0,h_5,h_6)$. Then by combining the bounds in \eqref{P2} and \eqref{P9}, we see that for $h\le \min (h_1,h_\star)$ and for any $x,y\in\mathbb R^d$ with $\max (|x|,|y|)\le R_2$,
$$E[f(R')]\ \le \ (1-c)\, f(r)$$
where $c:=\min (c_1,c_2)$. Moreover, by \eqref{P2a} and \eqref{P9a},
$$c\ \ge\ \frac{1}{10}\, e^{-2\mathcal R/T}\, \min\left( 1,\, \frac 12KT^2(1+\mathcal R/T)e^{-\mathcal R/(2T)}\right) .$$
This completes the proof for adjusted numerical HMC.\smallskip

For unadjusted numerical HMC, the proof simplifies because the rejection events $A(x)^C$ 
and $\hat A(y)^C$ are empty. Thus it suffices to control $\rn{1}$, $\rn{2}$ and $\rn{3}$,
and this can be done similarly as above whenever $h\le\min ( h_1,h_\star)$, where now we can choose $h_\star :=h_0$.
\end{proof}

\begin{proof}[Proof of Theorem \ref{thm:contrmainexact}]
The contraction bound for exact HMC can be derived similarly to the proof of Theorem \ref{thm:maincontraction}. In this case, instead of Theorem
\ref{SCB}, we apply Theorem \ref{SCA} in Step (i). Furthermore, the rejection events $A(x)^C$ and $\hat A(y)^C$, $x,y\in\mathbb R^d$, are empty for exact HMC. Therefore,
the corresponding terms do not have to be taken into account in Step (ii). Consequently, the resulting bound \eqref{contrmain} is valid for all $x,y\in\mathbb R^d$ with the same rate $c$ as above.
\end{proof}

Next, we prove the contraction result under Assumption \ref{A1245}. Again, we directly give the proof for adjusted numerical HMC, and we comment on the simplifications in the other cases afterwards.

\begin{proof}[Proof of Theorem \ref{thm:lyapcontraction}]

We fix $x,y\in\mathbb R^d$ such that $\max (|x|,|y|)\le R_2$ and we set
\begin{eqnarray*}
F(x,y)&=& f\left( \min (|x-y|,2\mathcal R)\right),\qquad F'(x,y)\, =\, F(X'(x,y),Y'(x,y)),\\
G(x,y)& =& 1+\epsilon\Psi (x)+\epsilon\Psi (y),
\qquad G'(x,y)\, =\, G(X'(x,y),Y'(x,y)).
\end{eqnarray*}
Since $x$ and $y$ are fixed, we 
omit the dependences on $x$ and $y$ in the notation. Thus $\rho_\epsilon =\sqrt{FG}$
and $\rho_\epsilon (X',Y') =\sqrt{F'G'}$. 
We consider separately the cases where $|x-y|< 2\mathcal R$ and
$|x-y|\ge 2\mathcal R$.\medskip

{\em (i) Contractivity for $|x-y|\le 2\mathcal R$.}
In this case, the same arguments as in the proof of Theorem \ref{thm:maincontraction} show
that for $h\le \min (h_5,h_6)$,
$$E[F']\ \le\ E[f(R')]\ \le\ (1-c_2)f(r)\ =\ (1-c_2)F $$
for a constant $c_2\ge\frac{1}{10}e^{-2\mathcal R/T}$. Furthermore, (A4) implies
\begin{eqnarray*}
E[G']& =& 1+\epsilon E[\Psi (X')+\Psi (Y')]\ =\ 1+\epsilon (\pi\Psi )(x)+\epsilon (\pi\Psi )(y)\\
&\le &1+\epsilon \Psi (x)+\epsilon \Psi (y)+2\epsilon A\ \le\ (1+2\epsilon A) G.
\end{eqnarray*}
Thus by the choice of $\epsilon$ in \eqref{Cepsilon},
\begin{eqnarray}
\nonumber E[\rho_\epsilon (X',Y')]& =& E[\sqrt{F'G'}]\ \le\ E[F']^{1/2}E[G']^{1/2}\\
\nonumber &\le & (1-c_2)^{1/2}(1+2\epsilon A)^{1/2}\sqrt{FG}\ \le \ \left(1-c_2/2+\epsilon A\right)
\rho_\epsilon\\ 
\label{coni}& \le & \left(1- c_2/4 \right)
\rho_\epsilon .
\end{eqnarray}

{\em (ii) Contractivity for $|x-y|> 2\mathcal R$.} \ If $|x-y|> 2\mathcal R$ then $|x|>\mathcal R$ or $|y|>\mathcal R$. Hence by \eqref{DefcalR}, $\Psi (x)+\Psi (y)>4A/\lambda$, and thus 
by (A4),
\begin{eqnarray*}
E[G'] &=& 1+\epsilon (\pi\Psi )(x)+\epsilon (\pi\Psi )(y)\\
&\le &1+\epsilon \left( (1-\lambda )(\Psi (x)+\Psi (y))+2A\right)\\
&\le &1+\epsilon \left( (1-\lambda /4 )(\Psi (x)+\Psi (y))-A\right)\\
&\le & \max (1-\lambda /4, 1-\epsilon A)\, G\ =\ (1-c_3)\, G
\end{eqnarray*}
where $c_3:=\min (\lambda /4,\epsilon A)=\min (c_2,\lambda )/4$. Since $F'\le f(2\mathcal R)\le f(|x-y|)=F$, we thus obtain
\begin{equation}
\label{conii}
E[\rho_\epsilon (X',Y')]\ =\ E[\sqrt{F'G'}]\ \le\ \sqrt F\, E[G']^{1/2}\ \le\ (1-c_3)^{1/2}\sqrt{FG}
\ \le \ (1-c_3/2)\rho_\epsilon .
\end{equation}
{\em (iii) Global contraction.} \ Let $h_\star :=\min (h_5,h_6)$. Then by combining the bounds in \eqref{coni} and \eqref{conii}, we see that for $h\le \min (h_1,h_\star)$ and for any $x,y\in\mathbb R^d$ with $\max (|x|,|y|)\le R_2$,
$$E[(\rho_\epsilon (X',Y')]\ \le \ (1-c)\, \rho_\epsilon (x,y)$$
where $c:=\min (c_2/4,c_3/2)=\min (c_2/8,\lambda /4)\ge \min (e^{-2\mathcal R/T},20\lambda )/80$. 
\end{proof}

\begin{proof}[Proof of Theorem \ref{thm:contrlyapexact}]
The proof is similar to Theorem \ref{thm:lyapcontraction}. Since the rejection events are empty, the bound holds for all $h\in [0,h_1]$.
\end{proof}

\section{Proofs of results in Section \ref{sec:QB}}
\label{sec:LYAP}

All bounds in Section \ref{sec:QB} are based on the following observation:

\begin{lemma}[Locally contractive couplings and supermartingales]\label{lem:supermart} \  \\
Let $\pi (x,dy)$ be a Markov transition kernel on a complete separable metric space $(S,\rho )$. Suppose that there exist a constant $c\in (0,\infty )$, a measurable subset $A\subseteq S$, a probability space $(\Omega ,\mathcal A,P)$, and a measurable map $$(x,y,\omega )\mapsto (X'(x,y)(\omega ),\, Y'(x,y)(\omega ))$$ from $S\times S\times\Omega$ to $S\times S$ such that for any
$x,y\in S$, $(X'(x,y),\, Y'(x,y))$ is a realization of a coupling of $\pi (x,\cdot )$ and $\pi (y,\cdot )$, and
\begin{equation}
\label{contrA}
E\left[\rho (X'(x,y),Y'(x,y))\right]\ \le\ e^{-c}\rho (x,y)\qquad\text{for }x,y\in A.
\end{equation}
Then, for any probability measure $\gamma$ on $S\times S$, there is a 
Markov chain $(X_n,Y_n)_{n\ge 0}$ defined on a probability space
$(\tilde\Omega ,\tilde{\mathcal A},\tilde P)$ such that $(X_0,Y_0)\sim\gamma$,
both marginal processes $(X_n)_{n\ge 0}$ and $(Y_n)_{n\ge 0}$ are Markov
chains on $S$ with transition kernel $\pi$, and such that the process
\begin{equation}
\label{Mn}M_n\, =\, e^{c(n\wedge T)}\rho (X_{n\wedge T},Y_{n\wedge T}),\quad
T\, =\, \min\{ n\ge 0:(X_n,Y_n)\not\in A\times A\} ,
\end{equation}
is a non-negative supermartingale w.r.t.\ the filtration generated by
$(X_n,Y_n)_{n\ge 0}$.
\end{lemma}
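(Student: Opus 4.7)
The plan is to build the coupled chain on a canonical product space and then read off the supermartingale property by splitting on $\{T>n\}$ versus $\{T\le n\}$. Concretely, set $(\tilde\Omega,\tilde{\mathcal A},\tilde P)=(S\times S,\mathcal B(S\times S),\gamma)\otimes(\Omega,\mathcal A,P)^{\otimes\mathbb N}$, write the coordinates as $(X_0,Y_0)$ and an i.i.d.\ sequence $\omega_1,\omega_2,\ldots$ distributed as $P$, and define recursively
\begin{equation*}
(X_{n+1},Y_{n+1})\ :=\ \bigl(X'(X_n,Y_n,\omega_{n+1}),\,Y'(X_n,Y_n,\omega_{n+1})\bigr).
\end{equation*}
The joint measurability hypothesis on $(x,y,\omega)\mapsto(X',Y')$ guarantees that each $(X_n,Y_n)$ is a bona fide $S\times S$-valued random variable, and I would let $\mathcal F_n:=\sigma((X_k,Y_k):k\le n)$.

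Next I would verify that the marginals are $\pi$-chains. Since $\omega_{n+1}$ is independent of $\mathcal F_n$ and $(X'(x,y),Y'(x,y))$ is a coupling of $\pi(x,\cdot)$ and $\pi(y,\cdot)$, conditioning on $\mathcal F_n$ gives $\mathrm{Law}(X_{n+1}\mid\mathcal F_n)=\pi(X_n,\cdot)$ (depending only on $X_n$) and likewise for $Y_{n+1}$. Hence $(X_n)$ and $(Y_n)$ are Markov chains on $S$ with kernel $\pi$, started from the respective marginals of $\gamma$.

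For the supermartingale property of $M_n$, note that $T=\min\{n\ge 0:(X_n,Y_n)\notin A\times A\}$ is a stopping time, so $\{T>n\}\in\mathcal F_n$. On $\{T\le n\}$ the definition in \eqref{Mn} freezes both $n\wedge T$ and $(n+1)\wedge T$ at $T$, so $M_{n+1}=M_n$ pathwise. On $\{T>n\}$ we have $(X_n,Y_n)\in A\times A$, $M_n=e^{cn}\rho(X_n,Y_n)$, and $(n+1)\wedge T=n+1$ (whether $T=n+1$ or $T>n+1$), so $M_{n+1}=e^{c(n+1)}\rho(X_{n+1},Y_{n+1})$. The local contractivity assumption \eqref{contrA}, applied pointwise in $(X_n,Y_n)$ via the Markov property, yields
\begin{equation*}
E[\rho(X_{n+1},Y_{n+1})\mid\mathcal F_n]\,\mathbf 1_{\{T>n\}}\ \le\ e^{-c}\rho(X_n,Y_n)\,\mathbf 1_{\{T>n\}},
\end{equation*}
and multiplying by $e^{c(n+1)}$ gives $E[M_{n+1}\mid\mathcal F_n]\le M_n$ on $\{T>n\}$. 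Combining the two cases yields the supermartingale inequality almost surely, and non-negativity is immediate from $\rho\ge 0$.

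There is no genuine obstacle here: the only points that require a little care are ensuring the measurability of the recursion (handled by the joint-measurability assumption on $(X',Y')$) and correctly accounting for the stopping time in the exponential factor, which is why I split the conditional expectation on $\{T\le n\}$ versus $\{T>n\}$ rather than trying to manipulate $e^{c((n+1)\wedge T)}$ directly.
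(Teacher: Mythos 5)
Your proposal is correct and follows essentially the same route as the paper: you realize the coupled chain with transition kernel equal to the law of $(X'(x,y),Y'(x,y))$ (the paper asserts its existence abstractly, you build it via i.i.d.\ innovations on a product space), and you derive the supermartingale property from the one-step contraction \eqref{contrA} together with the fact that $\{T>n\}\in\mathcal F_n$ forces $(X_n,Y_n)\in A\times A$, exactly as in the paper. Your explicit case split on $\{T\le n\}$ versus $\{T>n\}$ only spells out the stopping-time bookkeeping that the paper leaves implicit.
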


\begin{proof}
For $x,y\in S\times S$ let 
$$k((x,y),\ \cdot\ )\ :=\ P\circ (X'(x,y),Y'(x,y))^{-1}$$
denote the joint law of $X'(x,y)$ and $Y'(x,y)$. Then $k$ is a transition kernel
on $S\times S$ with marginals $\pi (x,\cdot )$ and $\pi (y,\cdot )$, and by
\eqref{contrA},
\begin{equation}
\label{krho}
(k\rho )(x,y)\ \le\ e^{-c}\rho (x,y)\qquad\text{for any }x,y\in A. 
\end{equation}
Now let $(X_n,Y_n)_{n\ge 0}$ be a time-homogeneous Markov chain on a 
probability space $(\tilde\Omega ,\tilde{\mathcal A},\tilde P)$ with initial distribution $(X_0,Y_0)\sim\gamma$ and transition kernel $k$, and let
$\mathcal F_n=\sigma ((X_i,Y_i):0\le i\le n)$. Then for any $n\ge 0$,
$$\tilde E[\rho (X_{n+1},Y_{n+1})|\mathcal F_n]\ =\ (k\rho )(X_n,Y_n)\ \le\ 
e^{-c}\rho (X_n,Y_n)$$
holds $\tilde P$-almost surely on $\{ (X_n,Y_n)\in A\times A\} $. Therefore,
the process $(M_n)$ defined by \eqref{Mn} is a non-negative $(\mathcal F_n)$-supermartingale.
\end{proof}

The error bound for exact HMC in Corollary \ref{cor:QBHMC} is a direct consequence of Theorem \ref{thm:contrmainexact} and Lemma \ref{lem:supermart}:

\begin{proof}[Proof of Corollary \ref{cor:QBHMC}]
For exact HMC, by Theorem \ref{thm:contrmainexact}, the local contractivity 
condition \eqref{contrA} in Lemma \ref{lem:supermart} is satisfied for $S=A=\mathbb R^d$, $\rho$ and $c$ given by \eqref{rho} and \eqref{crate}, and the 
coupling $(X'(x,y),Y'(x,y))$ introduced above. Now let $\nu$ and $\eta$ be 
probability measures on $\mathbb R^d$, and let $\gamma$ be an arbitrary
coupling of $\nu$ and $\eta$. Then by Lemma \ref{lem:supermart}, there is a
Markov chain $(X_n,Y_n)_{n\ge 0}$ on a probability space $(\tilde\Omega ,\tilde{\mathcal A},\tilde P)$ such that $(X_0,Y_0)\sim\gamma$, both $(X_n)$ and $(Y_n)$ are Markov chains with transition kernel $\pi$ and initial laws $\nu$ and 
$\eta$, respectively, and $M_n=e^{cn}\rho (X_n,Y_n)$ is a non-negative 
supermartingale. Hence for any $n\in\mathbb N$,
$$\mathcal W_\rho (\nu\pi^n,\mu\pi^n)\ \le\ \tilde E[\rho (X_n,Y_n)]\ \le\ 
e^{-cn}E[\rho (X_0,Y_0)]\ =\ e^{-cn}\int\rho\, d\gamma .$$
Taking the infimum over all couplings $\gamma\in\Pi (\nu ,\eta )$, we see
that \eqref{QBHMC1} holds. Furthermore, by \eqref{rho} and \eqref{f},
\begin{equation}
\label{rhonorm}e^{-aR_1}|x-y|\ \le\ \rho (x,y)\ \le\ |x-y|\qquad\text{for any }x,y\in\mathbb R^d.
\end{equation}
Therefore, \eqref{QBHMC1} implies
$$\mathcal W^1(\nu\pi^n,\eta\pi^n)\ \le\ e^{aR_1}e^{-cn}\mathcal W^1(\nu ,\eta ).$$
Choosing $\eta =\mu$, we have $\eta \pi^n=\mu$ for all $n$. Hence
$$\Delta (n)\ \le\ \exp (aR_1-cn)\, \Delta (0).$$
The second part of the assertion now follows, because by \eqref{Ca} and \eqref{CR1}, $aR_1=\frac 52(1+\mathcal R/T)$.
\end{proof}

Now suppose again that $\pi (x,dy)$ is an arbitrary Markov transition kernel 
on a complete separable metric space $(S,\rho )$. For proving Theorem
\ref{thm:LYAP}, we combine Lemma \ref{lem:supermart} with a Lyapunov bound for exit probabilities:

\begin{proof}[Proof of Theorem \ref{thm:LYAP}]
Let $\nu$ and $\eta$ be probability measures on $S$, and let $\gamma$ be a 
coupling of $\nu$ and $\eta$. By (C3) in Assumption \ref{ALyap}, the conditions 
in Lemma \ref{lem:supermart} are satisfied with $A=\{\psi >C\} $. Hence on some probability space $(\tilde\Omega ,\tilde{\mathcal A},\tilde P)$, there is a
coupling $(X_n,Y_n)_{n\ge 0}$ of Markov chains with transition kernel $\pi$ and joint initial law  $(X_0,Y_0)\sim\gamma$ such that $M_n=e^{c(n\wedge T)}\rho (X_{n\wedge T},Y_{n\wedge T})$ is a non-negative 
supermartingale stopped at
$$T\ =\ \min\left\{ n\ge 0:\,\psi (X_n)>C\text{ or }\, \psi (Y_n)>C\right\} .$$
In particular, we obtain
\begin{eqnarray}
\label{pL1} e^{cn}E[\rho (X_n,Y_n);n\le T] & \le & E\left[ e^{c(n\wedge T)}\rho (X_{n\wedge T},Y_{n\wedge T})\right] \\
\nonumber &\le & E[\rho (X_0,Y_0)]\ =\ \int\rho\, d\gamma 
\end{eqnarray}
for any $n\in\mathbb N$. In order to bound the corresponding expectation on the 
complement $\{ n<T\}$, we observe that by Condition (C2) in Assumption
\ref{ALyap} and by the definition of $\delta (C)$ in \eqref{deltaC},
\begin{eqnarray}
\nonumber\lefteqn{E[\rho (X_n,Y_n);n>T]\ \le\ E\left[ \varphi (X_{n})+\varphi (Y_{n});n>T\right]}\\
\label{pL2} &\le & \beta^n E\left[ \varphi (X_{T})+\varphi (Y_{T});n>T\right]\\ 
\nonumber &\le & \beta^n E\left[ \psi (X_{T})+\psi (Y_{T});n>T\right]\, \delta (C).
\end{eqnarray}
Here we have used that by (C2), for any $k\le n$;
\begin{eqnarray*}
E[\varphi (X_n);T=k] &=& E[(\pi^{n-k}\varphi )(X_k);T=k]\ \le\ \beta^{n-k}E[\varphi (X_k);T=k]\\ &\le &\beta^nE[\varphi (X_T);T=k] ,
\end{eqnarray*}
and a corresponding inequality holds for $E[\varphi (Y_n);T=k]$.\smallskip\\
Furthermore, the Lyapunov condition (C1) in Assumption \ref{ALyap} implies that
the stopped process $N_n=\psi (X_{n\wedge T})/\lambda^{n\wedge T}$ is a 
non-negative supermartingale. Therefore,
$$E[\psi (X_T);n>T]\ \le\ \lambda^{n-1}E[\psi (X_T)/\lambda^T]\ \le\ \lambda^{n-1}E[\psi (X_0)]\ =\ \lambda^{n-1}\int\psi\,d\nu .$$
A corresponding bound holds for $E[\psi (Y_T);n>T]$, and thus
\begin{equation}
\label{pL3}
E[\psi (X_T)+\psi (Y_T);n>T]\ \le\ 
\lambda^{n-1}(\int\psi\, d\nu +\int\psi\, d\eta ).
\end{equation}
By combining the bounds in \eqref{pL1}, \eqref{pL2} and \eqref{pL3}, we obtain
$$E[\rho (X_n,Y_n)]\ \le\ e^{-cn}\int\rho\, d\gamma\, +\, \beta^n\lambda^{n-1}
(\int\psi\, d\nu +\int\psi\, d\eta )\,\delta (C)$$
for any $n\in\mathbb N$. The bound for the Kantorovich distance in 
\eqref{25A} now follows by taking the infimum over all couplings $\gamma\in\Pi (\nu ,\eta )$.
\end{proof}

From now on, we consider numerical HMC. Let $\pi$ denote the transition kernel
for a given step size $h>0$, and let $\rho$ be the metric defined by \eqref{rho},
\eqref{f}, \eqref{Ca} and \eqref{CR1}. To be able to apply Theorem \ref{thm:LYAP}, we first identify appropriate Lyapunov functions.

\begin{lemma}\label{lem:LYAP}
Let $T,h_1\in (0,\infty )$ such that \eqref{A0} holds. Then there exists $C_1\in (0,\infty )$ depending only on $L$, $M$ and $N$ such that for $C\in (1,\infty )$ and 
$h\in (0,h_1)$ with
\begin{equation}
\label{LTh} C_1Th^2\ \le\ \min\left( (\mathcal R+\sqrt{2/K}(\log C)^{3/4})^{-3},\, (1/3)^{3/2}\right) ,
\end{equation}
Conditions (C1) and (C2) in Assumption \ref{ALyap} are satisfied with 
\begin{eqnarray}
\label{phipsi}
\varphi (x)& =& |x|+2Td^{1/2},\qquad\psi (x)\ =\ \exp \left(U(x)^{2/3}\right),\\
\beta\ =\ 2,&& \lambda\ =\ E\left[ \exp(|\xi |^{4/3}+\frac 13|\xi |^2+1)\right]\text{ with } \xi\sim N(0,I_d).\label{betalambda}
\end{eqnarray}
\end{lemma}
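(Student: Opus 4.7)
The plan is to verify (C2) and (C1) separately.

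For (C2), the triangle inequality gives $\rho(x,y)\le|x-y|\le|x|+|y|\le\varphi(x)+\varphi(y)$ since $\varphi\ge|\cdot|$. For the drift bound $(\pi\varphi)(x)\le 2\varphi(x)$, note that $X'(x)\in\{x,q_T(x,\xi)\}$ for adjusted HMC, so $|X'(x)|\le\max(|x|,|q_T(x,\xi)|)$. By \eqref{8} combined with \eqref{A0} (which gives $L(T^2+Th)\le 1/4$), $|q_T(x,\xi)|\le\tfrac54\max(|x|,|x+T\xi|)\le\tfrac54(|x|+T|\xi|)$, so the same bound holds for $\max(|x|,|q_T(x,\xi)|)$. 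Taking expectations with $E[|\xi|]\le\sqrt d$ yields
\[
(\pi\varphi)(x)=E[|X'(x)|]+2T\sqrt d\le\tfrac54|x|+\tfrac{13}{4}T\sqrt d\le 2\varphi(x).
\]
The unadjusted case is identical since then $X'(x)=q_T(x,\xi)$.

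For (C1), the crucial tool is Lemma~\ref{D}, giving $|H(\phi_T(x,\xi))-H(x,\xi)|\le C_0'Th^2\max(|x|,|\xi|)^3$ with $C_0'$ depending only on $L,M,N$. Since $U(q_T)=H(\phi_T)-\tfrac12|p_T|^2\le H(\phi_T)$, one gets
\[
U(q_T(x,\xi))\ \le\ U(x)+\tfrac12|\xi|^2+C_0'Th^2\bigl(|x|^3+|\xi|^3\bigr).
\]
The strategy is: first, show that $\psi(x)\le C$ forces $|x|\le R_*$ for an $R_*$ of the form $\mathcal R+\sqrt{2/K}(\log C)^{3/4}$ (up to absolute constants absorbed into $C_1$). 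This uses (A3) with $y=0$ (and $\nabla U(0)=0$ from (A1)) to get $\nabla U(x)\cdot x\ge K|x|^2$ for $|x|\ge\mathcal R$; integrating along the radial ray and controlling the short segment $\{|tx|<\mathcal R\}$ by the Lipschitz bound \eqref{A1.5} yields a quadratic lower bound $U(x)\ge\tfrac K2|x|^2-O(\mathcal R^2)$, which inverts (using $U(x)\le(\log C)^{3/2}$) to the claimed bound on $|x|$. Second, choose $C_1$ a large enough multiple of $C_0'$ so that \eqref{LTh} forces both $C_0'Th^2|x|^3\le 1$ for such $x$, and $C_0'Th^2|\xi|^3\le(|\xi|^2/3)^{3/2}$ via the second bound. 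The error term is then at most $1+(|\xi|^2/3)^{3/2}$. Third, apply the subadditivity $(a+b+c)^{2/3}\le a^{2/3}+b^{2/3}+c^{2/3}$ for $a,b,c\ge 0$, together with $(1+(|\xi|^2/3)^{3/2})^{2/3}\le 1+|\xi|^2/3$ and $(|\xi|^2/2)^{2/3}\le|\xi|^{4/3}$, to obtain
\[
U(q_T(x,\xi))^{2/3}\ \le\ U(x)^{2/3}+|\xi|^{4/3}+\tfrac13|\xi|^2+1.
\]
Exponentiating gives $\psi(q_T(x,\xi))\le\psi(x)\exp(|\xi|^{4/3}+|\xi|^2/3+1)$. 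Since the exponent is nonnegative, this bound also majorizes $\psi(x)$, hence $\psi(X'(x))\le\psi(x)\exp(|\xi|^{4/3}+|\xi|^2/3+1)$ in both the accept and reject cases. Taking expectation over $\xi\sim N(0,I_d)$ yields $(\pi\psi)(x)\le\lambda\,\psi(x)$.

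The main obstacle is the first step of (C1) --- pinning down a quadratic lower bound on $U$ outside the ball with constants clean enough to match the precise form in \eqref{LTh}. The derivation above produces $|x|\lesssim\mathcal R+\sqrt{2/K}(\log C)^{3/4}$ but with implicit constants depending on $L/K$; these get absorbed into $C_1$ (permissibly, since $C_1$ is allowed to depend on $L,M,N$ and one always has $K\le L$). The rest is a clean combination of near-conservation of $H$ and subadditivity of $t\mapsto t^{2/3}$.
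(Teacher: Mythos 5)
Your overall route is the same as the paper's: (C2) via the a priori bound of Lemma~\ref{0A} and $E[|\xi|]\le d^{1/2}$, and (C1) via near-conservation of $H$ (Lemma~\ref{D}), a bound $|x|\le \mathcal R+\sqrt{2/K}(\log C)^{3/4}$ on the sublevel set $\{\psi\le C\}$, and subadditivity of $t\mapsto t^{2/3}$. The (C2) part and the subadditivity bookkeeping (giving exactly $\lambda=E[\exp(|\xi|^{4/3}+\tfrac13|\xi|^2+1)]$) are correct.

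The genuine gap is exactly at the step you flag as the main obstacle. Your radial integration starts at $0$ and controls the segment $\{|tx|<\mathcal R\}$ by \eqref{A1.5}, which yields $U(x)\ge \tfrac K2|x|^2-O((K+L)\mathcal R^2)$ and hence $|x|\le \sqrt{1+L/K}\,\mathcal R+\sqrt{2/K}(\log C)^{3/4}$. To force $C_0'Th^2|x|^3\le 1$ from \eqref{LTh} you would then need $C_1\gtrsim C_0'(1+L/K)^{3/2}$, and your justification that this constant ``gets absorbed into $C_1$ since $K\le L$'' is backwards: $K\le L$ gives a lower bound $L/K\ge 1$, not an upper bound, and $L/K$ is unbounded for fixed $L,M,N$ as $K\downarrow 0$. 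Since the lemma requires $C_1$ to depend only on $L,M,N$ (and $\lambda$ is specified exactly, so you cannot dump the excess into the exponent either), your argument as written proves only a weaker statement with $C_1$ depending on $K$. The fix is what the paper does in \eqref{Unorm}: using $U\ge 0$ (which you already need implicitly for $\psi=\exp(U^{2/3})$ and for the subadditivity step, so you lose nothing by invoking it), integrate $x\cdot\nabla U(x)\ge K|x|^2$ radially only over $[\mathcal R,|x|]$ to get $U(x)\ge\tfrac K2\bigl((|x|-\mathcal R)^+\bigr)^2$, hence the clean inversion $|x|\le\mathcal R+\sqrt{2U(x)/K}\le\mathcal R+\sqrt{2/K}(\log C)^{3/4}$ with no spurious constants; then taking $C_1$ a fixed multiple of the constant from Lemma~\ref{D} suffices and the stated $\lambda$ in \eqref{betalambda} comes out exactly. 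With that replacement your proof coincides with the paper's.
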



\begin{proof}[Proof of Theorem \ref{thm:QBNHMC}]
Let $C\in [e,\infty )$, i.e., $\log C\ge 1$. Then by Lemma \ref{lem:LYAP},
Conditions (C1) and (C2) in Assumption \ref{ALyap} are satisfied for
$\varphi$, $\psi$, $\beta$ and $\lambda$ given by \eqref{phipsi} and 
\eqref{betalambda}, provided \eqref{LTh} holds. 
This is the case for $h\le h_0$ where $h_0>0$ can be chosen such that
$h_0^{-1}$ is of order $O(\mathcal R^{3/2}+(\log C)^{9/8})$ for fixed values
of $K$ and $L$. Furthermore, by \eqref{Unorm}, $\psi (x)\le C$ implies $|x|\le R_2$, where we set
\begin{equation}
\label{R2C} R_2\ =\ \mathcal R+\sqrt{2/K}\, (\log C)^{3/4} .
\end{equation}
Therefore, by Theorem \ref{thm:maincontraction}, the local contractivity condition (C3) is satisfied with $c$ given by \eqref{crate} provided
$h\le \min (h_\star ,h_1)$ where $h_\star$ can be chosen such that $h_{\star}^{-1}$ is of order
$O\left((1+T^{-1/2}+\mathcal R^{1/2})(d+\mathcal R^2+(\log C)^{3/2})\right)$. Hence for $h\le h_{\star\star}=\min (h_\star ,h_0,h_1)$, all parts of
Assumption \ref{ALyap} are satisfied, and thus we can apply Theorem
\ref{thm:LYAP}. By \eqref{25A}, and since $\mu\pi^n =\mu$, we obtain
$$\mathcal W_\rho (\nu\pi^n,\mu )\ \le\ e^{-cn}\mathcal W_\rho (\nu ,\mu )\, +\, 
\beta^n\lambda^{n-1}(\int\psi\, d\nu +\int\psi\, d\mu )\, \delta (C) ,$$
where $\delta (C)$ is given by \eqref{deltaC}. By \eqref{rhonorm},
\eqref{Ca} and \eqref{CR1}, this implies
\begin{eqnarray}
\label{W1bound}
\Delta (n) &=&\mathcal W^1(\nu\pi^n,\mu)\ \le\  \rn{1}+ \rn{2},\qquad \text{where}\\
\nonumber  \rn{1}&=& \exp (aR_1-cn)\Delta (0)\ =\  \exp (\frac 52(1+\mathcal R/T)-cn)\Delta (0),\qquad\text{and}\\
\nonumber  \rn{2}&=&  \exp (\frac 52(1+\mathcal R/T))\beta^n\lambda^{n-1}(\int\psi\, d\nu +\int\psi\, d\mu )\, \delta (C).
\end{eqnarray}
Choosing $n$ as in \eqref{QBNHMCn}, we obtain $\rn 1\le\epsilon /2$. Furthermore, we can ensure $\rn 2\le\epsilon /2$ and thus $\Delta (n)\le\epsilon$
by choosing $C$ sufficiently large. Indeed, by \eqref{deltaC},
\begin{equation}
\label{deltaC1}
\delta (C)\, \le\, 2\sup\left\{ \frac{\max (\varphi (x),\varphi (y))}{\max (\psi (x),\psi (y))} :
x,y\in S\text{ s.t.\ }\max (\psi (x),\psi (y))>C\right\} .
\end{equation}
Moreover, by \eqref{Unorm} and \eqref{phipsi}, for any $x\in S$,
$$\varphi (x)\, =\, |x|+2Td^{1/2}\, \le\,
\mathcal R+2Td^{1/2}+\sqrt{2/K}(\log\psi (x))^{3/4}.$$
Let $x,y\in S$ such that $\max (\psi (x),\psi (y))>C$. Without loss of generality,
we assume $\max (\psi (x),\psi (y))=\psi (x)$. Then $\log\psi (x)>\log C\ge 1$, and hence
$$ \frac{\max (\varphi (x),\varphi (y))}{\max (\psi (x),\psi (y))}\le 
\frac{\mathcal R+2T\sqrt d+\sqrt{2/K}\log\psi (x)}{\psi (x)}\le 
\frac{\mathcal R+2T\sqrt d+\sqrt{2/K}\log C}{C}.$$
Here we have used that $t\mapsto t^{-1}\log t$ is decreasing for $\log t\ge 1$.
By \eqref{deltaC1}, we see that
\begin{equation}
\label{deltaC2}\delta (C)\ \le\ 2\, \left( \mathcal R+2T\sqrt d+\sqrt{2/K}\log C\right)/C.
\end{equation}
Consequently, we have $\rn 2\le\epsilon /2$ if
\begin{equation}
\label{Cl1}
C/(u+v\log C)\ \ge\ w\, (\beta\lambda )^n,
\end{equation}
where $u:=\mathcal R+2T\sqrt d$, $v:=\sqrt{2/K}$, and
$$w\ :=\ 4\epsilon^{-1}\exp\left( \frac 52 (1+\mathcal R/T)\right)\cdot
 \left(\int\psi\, d\nu+\int\psi\, d\mu \right) .$$
 Condition \eqref{Cl1} holds if and only if
 \begin{equation}
 \label{Cl2}
 \log C\ \ge\ \log (u+v\log C)\, +\, \log w\,+\, n\log (\beta\lambda ).
 \end{equation}
 In particular, since
 $$\log (u+v\log C)\, \le\, \log^+(2u)+\log^+(2v\log C)\, \le\, \log^+u+\log^+v+2+\log\log C ,$$
 there is a universal finite constant $C_0$ such that \eqref{Cl1} is satisfied if
 $C\ge C_0$ and
  \begin{equation}
 \label{Cl3}
 \log C\ \ge\ \log^+ u\, +\, \log^+ v +\, \log w\,+\, n\log (\beta\lambda ).
 \end{equation}
 We have $\log u=\log (\mathcal R+2T\sqrt d)$, $\log v=\frac 12\log (2/K)$, and
 $$\log w\ =\ \frac 52 (1+\mathcal R/T)\,\log\left( 4\frac{\int\psi\, d\mu +\int\psi\, d\nu}{\epsilon}\right) .$$
 Furthermore, by \eqref{betalambda}, $\log (\beta\lambda )$ is of order $O(d)$, and $n$ satisfies \eqref{QBNHMCn}. Combining these observations, we see that we can ensure $\rn 2 \le\epsilon /2$ and thus $\Delta (n)\le\epsilon $ by
 choosing $\log C$ proportional to $\, dn+(1+\mathcal R/T)\log^+\left(
 \frac{\int\psi\, d\mu +\int\psi\, d\nu}{\epsilon}\right)$. The assertion
 follows since $\log^+\int\psi\, d\mu =O(d)$ and
 $$h_{\star\star}^{-1}\ =\ O\left( (1+T^{-1/2}+\mathcal R^{1/2})(d+\mathcal R^2+(\log C)^{3/2}\right) .$$ 
\end{proof}

\appendix
\section{Explicit Lyapunov functions} \label{app:explicit_lyap}
In this appendix, we prove the results on Lyapunov functions for HMC stated in Examples \ref{example:LYAP1} and \ref{example:LYAP2} and Lemma
\ref{lem:LYAP}.

\begin{proof}[Proof for Example \ref{example:LYAP1}]
Let $x_t=q_t(x,\xi )$ and $v_t=p_t(x,\xi )$ where $x\in\mathbb R^d$ and $\xi\sim N(0,I_d)$.
A simple computation shows that if \eqref{DC} holds then for $t\le T$,
\begin{eqnarray*}
\frac{d}{dt}|x_t|^2 &\le & 2x_t\cdot v_t+2hLx_T^{\star ,2},\qquad\text{and}\\
\frac{d}{dt}(x_t\cdot v_t) &\le & |v_t|^2-\kappa |x_t|^2 +C+\frac 32 hL x_T^{\star} v_T^{\star}+\frac 14 h^2L^2 x_T^{\star ,2}.
\end{eqnarray*}
Therefore,
\begin{eqnarray}
\nonumber |x_T|^2 &\le & |x|^2+2Tx\cdot \xi +2\int_0^T\int_0^t|v_s|^2ds\, dt-2\kappa \int_0^T\int_0^t|x_s|^2ds\, dt\\
\label{BXT1}&&\ +CT^2+2hLTx_T^{\star ,2}+\frac 32hLT^2 x_T^{\star} v_T^{\star}+\frac 14 h^2L^2 T^2x_T^{\star ,2}.
\end{eqnarray}
Now for $s\le T$, we can apply the a priori bounds
\begin{eqnarray}
\label{APV} |v_s| &\le & |\xi |+2Ls\max (|x|,|x+s\xi |),\\
\label{APX} |x_s-(x+s\xi )| &\le &  \max (|x|,|x+s\xi |)/10,
\end{eqnarray}
which follow from Lemma \ref{0A}. With a short computation these imply
\begin{eqnarray*}
2\int_0^T\int_0^t|v_s|^2ds\,dt &\le & (2T^2+\frac{16}{15}L^2T^6)|\xi |^2+\frac 83L^2T^4|x|^2,\\
2\int_0^T\int_0^t|x_s|^2ds\,dt &\ge & \frac{79}{200}T^2|x|^2+\frac{27}{100}T^3x\cdot\xi +\frac{27}{400} T^4|\xi |^2.
\end{eqnarray*}
By bounding the corresponding terms in \eqref{BXT1} and noting that
$x_T^\star\le \frac{11}{10}(|x|+T|\xi |)$, $v_T^\star\le \frac{6}{5}|\xi |+2LT|x|$ and $L(T^2+hT)\le \kappa /(10L)$, we conclude that
\begin{equation}
\label{BXT2}|x_T|^2\ \le\ (1-\frac{76}{600}\kappa T^2)|x|^2+(2T-\frac{27}{100}\kappa T^3)x\cdot\xi +(C+2|\xi |^2)T^2 
\end{equation}
provided that $T/h\ge n_0$ for an appropriate constant $n_0\in\mathbb N$ depending on $\kappa $ and $L$.
Taking expectations in \eqref{BXT2}, we conclude that \eqref{LyapPsi} holds for exact and unadjusted HMC with $\Psi (x)=|x|^2$. For adjusted numerical HMC, we obtain
\begin{eqnarray*}
(\pi\Psi )(x) &=& E[ |x_T|^2;A(x)]\, +\, |x|^2\,P[A(x)^C]\\
&\le & (1-\frac{76}{600}\kappa T^2+P[A(x)^C])\, \Psi (x)\, +\, (C+2d)T^2.
\end{eqnarray*}
Hence by the a priori bound on the rejection probability in Theorem \ref{AR}, we see
that \eqref{LyapPsi} holds for $|x|\le R_2$ if $R_2^3h^2$
is sufficiently small.
\end{proof}

\begin{proof}[Proof for Example \ref{example:LYAP2}]
We only sketch the proof for exact HMC.
Let $\delta >0$ and choose $\Psi (x)=g(|x|^2)$ where $g:\mathbb R_+\to\mathbb R_+$ is a smooth
increasing convex function such that $g(0)=0$, $g(s)=\exp (\delta \sqrt s)$ for $s \ge \delta^{-2}$, and $g''(s)\le 2\delta^4$ for $s\le\delta^{-2}$, and let $x_t$ and $v_t$ be defined as in the proof above. Assumption \eqref{DC1} implies that for all $t$,
\begin{equation}
\label{AP1}
|v_t-\xi |\le Qt\qquad\text{and}\qquad |x_t-(x+\xi t)|\le Qt^2/2.
\end{equation}
Noting that $\Psi (x)=\exp (\delta |x|)$ if $|x|\ge\delta^{-1}$, an explicit computation shows that if $|x_t|>\delta^{-1}$ then by \eqref{DC1} and \eqref{AP1},
\begin{eqnarray*}
\frac{d^2}{dt^2}\Psi (x_t) &=&
\left[   \delta |x_t|^{-1}\left(|v_t-\frac{x_t\cdot v_t}{|x_t|^2}x_t|^2-x_t\cdot\nabla U(x_t)\right)+\delta^2\frac{(x_t\cdot v_t)^2}{|x_t|^2}\right]\, \Psi (x_t)\\
&\le & \left[ -\kappa +\delta C+2\delta (|\xi |^2+Q^2t^2)\right]\, \delta\, \Psi (x_t)\\
&\le & \left[ -\kappa e^{-\delta t|\xi |-\delta Qt^2/2} +(C+2|\xi |^2+Q^2t^2)\delta e^{\delta t|\xi |+\delta Qt^2/2} \right]\, \delta\, \Psi (x).
\end{eqnarray*}
Similarly, one verifies that if $|x_t|\le \delta^{-1}$ then
$$\frac{d^2}{dt^2}\Psi (x_t) \ =\ \frac{d^2}{dt^2}g (|x_t|^2) \ \le\ \left( 16 |\xi |^2+16Q^2t^2+eC\right)\, \delta^2 .$$
We now choose $\delta :=\kappa /(4A+8d+Q^2T^2)$. Since by \eqref{DC1}, $\kappa \le Q$  and $\xi\sim N(0,I_d)$, this choice implies in particular that $E[\exp (\delta T|\xi |+\delta QT^2/2)]\le\sqrt 2$. Noting that 
$$\Psi (x_T)\ =\ \Psi (x)+Tg'(|x|^2)x\cdot \xi+\int_0^T\int_0^t\frac{d^2}{ds^2}\Psi (x_s)\,ds\, dt,$$
and combining the bounds above, we then obtain
$$
(\pi\Psi )(x) \ =\ \Psi (x) + \int_0^T\int_0^tE\left[\frac{d^2}{ds^2}\Psi (x_s) \right]\, ds\, dt\ \le \ \delta\kappa T^2\, \left( 5-\frac{1}{4\sqrt 2}\Psi (x)\right).
$$
\end{proof}

\begin{proof}[Proof of Lemma \ref{lem:LYAP}]
We first remark that by Assumption \ref{A123}, $x\cdot\nabla U(x)\ge K|x|^2$ for $|x|\ge\mathcal R$. Therefore, for any $x\in\mathbb R^d$,
\begin{equation}
\label{Unorm}
U(x)\ \ge\ \frac{K}{2}\min (|x|-\mathcal R,0)^2\quad\text{and}\quad |x|\ \le\ \mathcal R+\sqrt{2U(x)/K}.
\end{equation}
Furthermore, by \eqref{rho} and \eqref{f},
$$\rho (x,y)\ \le\ |x-y|\ \le\ |x|+|y|\le\ \varphi (x)+\varphi (y)\qquad \text{for any }x,y\in\mathbb R^d.$$
To verify the Lyapunov conditions recall that
$$(\pi\varphi )(x)\ =\ E[\varphi (q_T(x,\xi ));A(x)]\, +\, \varphi (x)\, P[A(x)^C]\quad\text{with }\xi\sim N(0,I_d).$$
By Lemma \ref{0A}, $|q_T(x,\xi )|\le 2(|x|+T|\xi |)$,
and thus for any $x\in \mathbb R^d$,
$$(\pi\varphi )(x)\ \le\ 2\, E\left[|x|+T|\xi |+2Td^{1/2}\right]\ \le\ 2|x|+4Td^{1/2}\ =\ 2 \varphi (x).$$
Hence (C2) is satisfied.\smallskip\\
Furthermore, by Lemma \ref{D}, there is a finite constant $C_1$ such that
\begin{eqnarray}\label{Ubound}
U(q_T(x,\xi ))& \le & H(\phi_T(x,\xi ))\ \le\ H(x,\xi )+C_1Th^2\max (|x|,|\xi |)^3\\
\nonumber &\le & U(x)+\frac 12 |\xi |^2+C_1Th^2|\xi |^3+C_1Th^2|x|^3.
\end{eqnarray}
Suppose that $\psi (x)\le C$. Then $U(x)\le (\log C)^{3/2}$, and hence by 
\eqref{Unorm},\\ $|x|\le \mathcal R+\sqrt{2/K}(\log C)^{3/4}$. Therefore, if
\eqref{LTh} holds then by \eqref{Ubound}, we obtain
$$(\pi\psi )(x)\ \le\ E\left[ \exp\left( U(x)^{2/3}+|\xi |^{4/3}+|\xi |^{2}/3+1\right)\right]
\ =\ \lambda\psi (x)$$
for any $x\in S$ such that $\psi (x)\le C$. Hence (C1) is satisfied as well.
\end{proof}

\bibliographystyle{amsplain}
\bibliography{bibhmc}

\providecommand{\bysame}{\leavevmode\hbox to3em{\hrulefill}\thinspace}
\providecommand{\MR}{\relax\ifhmode\unskip\space\fi MR }
\providecommand{\MRhref}[2]{%
  \href{http://www.ams.org/mathscinet-getitem?mr=#1}{#2}
}
\providecommand{\href}[2]{#2}
\begin{thebibliography}{10}

\bibitem{An1980}
H.~C. Andersen, \emph{Molecular dynamics simulations at constant pressure
  and/or temperature}, J Chem Phys \textbf{72} (1980), 2384.

\bibitem{andrieu2003introduction}
C.~Andrieu, N.~De~Freitas, A.~Doucet, and M.~I. Jordan, \emph{An introduction
  to {MCMC} for machine learning}, Machine learning \textbf{50} (2003),
  no.~1-2, 5--43.

\bibitem{bishop2006pattern}
C.~M. Bishop, \emph{Pattern recognition and machine learning}, springer, 2006.

\bibitem{BoSa2016}
N.~Bou-Rabee and J.~M. Sanz-Serna, \emph{Randomized {H}amiltonian {M}onte
  {C}arlo}, Ann Appl Probab \textbf{27} (2017), no.~4, 2159 -- 2194.

\bibitem{BoSaActaN2018}
N.~B{ou-Rabee} and J.~M. Sanz-Serna, \emph{Geometric integrators and the
  {H}amiltonian {M}onte {C}arlo method}, Acta Numerica \textbf{27} (2018).

\bibitem{Butkovsky}
Oleg Butkovsky, \emph{Subgeometric rates of convergence of {M}arkov processes
  in the {W}asserstein metric}, Ann. Appl. Probab. \textbf{24} (2014), no.~2,
  526--552.

\bibitem{cheng2017underdamped}
X.~Cheng, N.~S. Chatterji, P.~L. Bartlett, and M.~I. Jordan, \emph{Underdamped
  {L}angevin {MCMC}: A non-asymptotic analysis}, arXiv preprint
  arXiv:1707.03663 (2017).

\bibitem{cranston}
M.~Cranston, \emph{Gradient estimates on manifolds using coupling}, J. Funct.
  Anal. \textbf{99} (1991), no.~1, 110--124.

\bibitem{DashtiStuart}
M.~Dashti and A.~M. Stuart, \emph{The {B}ayesian approach to inverse problems},
  Handbook of uncertainty quantification. {V}ol. 1, 2, 3, Springer, Cham, 2017,
  pp.~311--428.

\bibitem{Di2009}
P.~Diaconis, \emph{The {M}arkov {C}hain {M}onte {C}arlo revolution}, Bulletin
  of the American Mathematical Society \textbf{46} (2009), no.~2, 179--205.

\bibitem{DiHoNe2000}
P.~Diaconis, S.~Holmes, and R.~M. Neal, \emph{Analysis of a nonreversible
  {M}arkov chain sampler}, Annals of Applied Probability (2000), 726--752.

\bibitem{DuKePeRo1987}
S.~Duane, A.~D. Kennedy, B.~J. Pendleton, and D.~Roweth, \emph{Hybrid
  {M}onte-{C}arlo}, Phys Lett B \textbf{195} (1987), 216--222.

\bibitem{durmuseberle2019}
A.~Durmus and A.~Eberle, \emph{Error bounds for inexact {M}arkov chain {M}onte
  {C}arlo methods in high dimensions}, in preparation.

\bibitem{DurmusMoulinesSaksman}
A.~{Durmus}, E.~{Moulines}, and E.~{Saksman}, \emph{{On the convergence of
  Hamiltonian Monte Carlo}}, ArXiv e-prints (2017).

\bibitem{ELi2008}
W.~E and D.~Li, \emph{The {A}ndersen thermostat in molecular dynamics}, CPAM
  \textbf{61} (2008), 96--136.

\bibitem{EberleMH}
A.~Eberle, \emph{Error bounds for {M}etropolis-{H}astings algorithms applied to
  perturbations of {G}aussian measures in high dimensions}, Ann. Appl. Probab.
  \textbf{24} (2014), no.~1, 337--377.

\bibitem{Eb2016A}
\bysame, \emph{Reflection couplings and contraction rates for diffusions},
  Probability theory and related fields \textbf{166} (2016), no.~3-4, 851--886.

\bibitem{EGZHarris}
A.~Eberle, A.~Guillin, and R.~Zimmer, \emph{Quantitative {H}arris-type theorems
  for diffusions and {M}c{K}ean--{V}lasov processes}, Transactions of the
  American Mathematical Society (2018).

\bibitem{EbGuZi2016}
\bysame, \emph{Couplings and quantitative contraction rates for {L}angevin
  dynamics}, Annals of Probability (2019).

\bibitem{ghahramani2015probabilistic}
Z.~Ghahramani, \emph{Probabilistic machine learning and artificial
  intelligence}, Nature \textbf{521} (2015), no.~7553, 452.

\bibitem{Gu1998}
P.~Gustafson, \emph{A guided walk {M}etropolis algorithm}, Statistics and
  Computing \textbf{8} (1998), no.~4, 357--364.

\bibitem{HairerMattinglyScheutzow}
M.~Hairer, J.~C. Mattingly, and M.~Scheutzow, \emph{Asymptotic coupling and a
  general form of {H}arris' theorem with applications to stochastic delay
  equations}, Probab. Theory Related Fields \textbf{149} (2011), no.~1-2,
  223--259.

\bibitem{HairerStuartVoss}
M.~Hairer, A.~M. Stuart, and J.~Voss, \emph{Analysis of {SPDE}s arising in path
  sampling. {II}. {T}he nonlinear case}, Ann. Appl. Probab. \textbf{17} (2007),
  no.~5-6, 1657--1706.

\bibitem{heng2017unbiased}
J.~Heng and P.~E. Jacob, \emph{Unbiased {H}amiltonian {M}onte {C}arlo with
  couplings}, arXiv preprint arXiv:1709.00404 (2017).

\bibitem{HoGe2014}
M.~D. Homan and A.~Gelman, \emph{The no-{U}-turn sampler: Adaptively setting
  path lengths in {H}amiltonian {M}onte {C}arlo}, The Journal of Machine
  Learning Research \textbf{15} (2014), no.~1, 1593--1623.

\bibitem{JoulinOllivier}
A.~Joulin and Y.~Ollivier, \emph{Curvature, concentration and error estimates
  for {M}arkov chain {M}onte {C}arlo}, Ann. Probab. \textbf{38} (2010), no.~6,
  2418--2442.

\bibitem{KoZhWo2006}
S.~C. Kou, Q.~Zhou, and W.~H. Wong, \emph{Discussion paper equi-energy sampler
  with applications in statistical inference and statistical mechanics}, Annals
  of Statistics (2006), 1581--1619.

\bibitem{LPW}
D.~A. Levin, Y.~Peres, and E.~L. Wilmer, \emph{Markov chains and mixing times},
  American Mathematical Society, Providence, RI, 2017, Second edition, With a
  chapter on ``Coupling from the past'' by James G. Propp and David B. Wilson.

\bibitem{Li2007}
D.~Li, \emph{On the rate of convergence to equilibrium of the {A}ndersen
  thermostat in molecular dynamics}, J Stat Phys \textbf{129} (2007), 265--287.

\bibitem{LiWo2001}
F.~Liang and W.~H. Wong, \emph{Real-parameter evolutionary {M}onte {C}arlo with
  applications to {B}ayesian mixture models}, Journal of the American
  Statistical Association \textbf{96} (2001), no.~454, 653--666.

\bibitem{Li2008}
J.~S. Liu, \emph{Monte {C}arlo strategies in scientific computing}, 2nd ed.,
  Springer, 2008.

\bibitem{Livingstone}
S.~{Livingstone}, M.~{Betancourt}, S.~{Byrne}, and M.~{Girolami}, \emph{{On the
  Geometric Ergodicity of Hamiltonian Monte Carlo}}, ArXiv e-prints (2016).

\bibitem{Ma1989}
P.~B. Mackenzie, \emph{An improved hybrid {M}onte {C}arlo method}, Physics
  Letters B \textbf{226} (1989), no.~3, 369--371.

\bibitem{mangoubi2017rapid}
O.~Mangoubi and A.~Smith, \emph{Rapid mixing of {H}amiltonian {M}onte {C}arlo
  on strongly log-concave distributions}, arXiv preprint arXiv:1708.07114
  (2017).

\bibitem{Ne2011}
R.~M. Neal, \emph{{MCMC} using {H}amiltonian dynamics}, Handbook of {M}arkov
  {C}hain {M}onte {C}arlo \textbf{2} (2011).

\bibitem{Sa2014}
J.~M. Sanz-Serna, \emph{Markov {C}hain {M}onte {C}arlo and numerical
  differential equations}, Current Challenges in Stability Issues for Numerical
  Differential Equations (L.~Dieci and N.~Guglielmi, eds.), vol. 2082,
  Springer, 2014, pp.~39 -- 88.

\bibitem{Seiler}
C.~Seiler, S.~Rubinstein-Salzedo, and S.~Holmes, \emph{Positive curvature and
  {H}amiltonian {M}onte {C}arlo}, Advances in Neural Information Processing
  Systems 27 (Z.~Ghahramani, M.~Welling, C.~Cortes, N.~D. Lawrence, and K.~Q.
  Weinberger, eds.), Curran Associates, Inc., 2014, pp.~586--594.

\bibitem{Wang}
Feng-Yu Wang, \emph{Functional inequalities, {M}arkov semigoups and spectral
  theory}, Science Press, 2005.

\bibitem{webb2003statistical}
A.~R. Webb, \emph{Statistical pattern recognition}, John Wiley \& Sons, 2003.

\bibitem{Zimmer}
R.~Zimmer, \emph{Explicit contraction rates for a class of degenerate and
  infinite-dimensional diffusions}, Stoch. Partial Differ. Equ. Anal. Comput.
  \textbf{5} (2017), no.~3, 368--399.

\end{thebibliography}

\end{document}